\numberwithin{equation}{section}
\def\e{\epsilon}
\def\sq{\mathsf{q}}
\newcommand{\E}{\mathbb{E}}
\newcommand{\R}{\mathbb{R}}
\newcommand{\bbP}{\mathbb{P}}
\newcommand{\bfP}{\mathbf{P}}
\newcommand{\cL}{\mathcal{L}}
\newtheorem{theorem}{Theorem}[section]
\newtheorem{lemma}{Lemma}[section]
\newtheorem{definition}{Definition}[section]
\newtheorem{remark}{Remark}[section]
\newtheorem{proposition}{Proposition}[section]
\newtheorem{assumption}{Assumption}[section]
\numberwithin{equation}{section}
\begin{document}

\title{On the mean field limit of the Random Batch Method for interacting particle systems \footnote{Accepted for publication in SCIENCE CHINA Mathematics.}}
\author[1]{Shi Jin \thanks{shijin-m@sjtu.edu.cn}}
\author[2]{Lei Li\thanks{leili2010@sjtu.edu.cn}}
\affil[1,2]{School of Mathematical Sciences, Institute of Natural Sciences, MOE-LSC, Shanghai Jiao Tong University, Shanghai, 200240, P. R. China.}

\date{}
\maketitle

\begin{abstract}
The Random Batch Method proposed in our previous work [Jin et al., J. Comput. Phys., 400(1), 2020]  is not only a numerical method for interacting particle systems and its mean-field limit, but also can be viewed as a model of particle system in which particles interact, at discrete time,  with randomly selected mini-batch of particles. In this paper we investigate the mean-field limit of this model as the number of particles $N \to \infty$.   Unlike the classical mean field limit for interacting particle systems where the law of large numbers plays the role and the chaos is propagated to later times, the mean field limit now does not rely on the law of large numbers and chaos is imposed at every discrete time.  Despite this,  we will not only justify this mean-field limit (discrete in time)  but will also show that  the limit, as the discrete time interval $\tau \to 0$, approaches to the solution of a nonlinear Fokker-Planck equation arising as the mean-field limit of the original interacting particle system  in Wasserstein distance. \\
\textbf{\textit{ Keywords:} } Random Batch Method, mean field limit, chaos, Wasserstein distance, nonlinear Fokker-Planck equation.\\
\textbf{\textit{ MSC(2010): }} 65C20, 34F05, 35K55.
\end{abstract}

\section{Introduction}

Many physical, biological and social sciences phenomena, at the microscopic level, are described by interacting particle systems, for example, molecules in fluids \cite{frenkel2001understanding}, plasma \cite{birdsall2004}, swarming \cite{vicsek1995novel,carrillo2017particle,carlen2013kinetic,degond2017coagulation}, chemotaxis \cite{horstmann03,bertozzi12}, flocking  \cite{cucker2007emergent,hasimple2009,albi2013}, synchronization \cite{choi2011complete,ha2014complete}
 and consensus \cite{motsch2014}.
We consider  the following general first order systems
\begin{gather}\label{eq:Nparticlesys}
dX^i=b(X^i)\,dt+\frac{1}{N-1}\sum_{j: j\neq i} K(X^i-X^j)\,dt+\sqrt{2}\sigma\, dW^i,~~i=1,2,\cdots, N,
\end{gather}
with the initial data $X^i_0$'s being independent and identically distributed (i.i.d.), sampled from a common distribution
$\mu_0$. $W^i$'s are $N$ independent $d$-dimensional Wiener processes (standard Brownian motions). Here, we allow $\sigma=0$ to include  systems without noise. 

As is well-known, under certain conditions, the mean field limit (i.e., $N\to\infty$) of \eqref{eq:Nparticlesys} is given by 
\begin{gather}\label{eq:nonlinearFP}
\partial_t\mu=-\nabla\cdot((b(x)+K*\mu)\mu)+\sigma^2\Delta\mu.
\end{gather}
This means that the empirical measure $\mu_N:=N^{-1}\sum_{i=1}^N \delta(x-X^i)$ converges weakly to $\mu$ almost surely and the one marginal distribution $\mu_N^{(1)}:=\mathscr{L}(X^1)$, the law of $X^1$, converges to $\mu$.
See  \cite{cattiaux2008,fournier2014,durmus2018elementary,li2019} for some related models and proofs, though the setups in these works do not quite fit our problem as we allow $|b(\cdot)|$ to have polynomial growth.
Recall that $\mu$ is in general a probability distribution and \eqref{eq:nonlinearFP} is understood in the distributional sense.  We will denote the solution operator to \eqref{eq:nonlinearFP} by $\mathcal{S}$:
\begin{gather}
\mathcal{S}(\Delta)\mu(t_1):=\mu(t_1+\Delta),~\forall t_1\ge 0, \Delta\ge 0.
\end{gather}
Clearly, $\{\mathcal{S}(t): t\ge 0\}$ is a nonlinear semigroup.

Direct simulation of \eqref{eq:Nparticlesys} costs $O(N^2)$ per time step, which is expensive. To reduce the computational cost, in \cite{jin2020random},  a random algorithm that uses random mini-batches, called the Random Batch Method (RBM), has been proposed to reduce the computation cost per time step from $O(N^2)$ to $O(N)$. The method has been applied to various problems with promising results \cite{jin2020random,li2019stochastic,ko2020model,li2020direct}. However, the understanding of the method is still limited, despite some theoretical proofs \cite{jin2020random,jin2020convergence}.
 The idea of using the ``mini-batch" was inspired by the stochastic gradient descent (SGD) method \cite{robbins1951,bottou1998online} in machine learning. The ``mini-batch" was also used for Bayesian inference  \cite{welling2011bayesian}, and similar ideas were used to simulate the mean-field equations for flocking \cite{albi2013}.  How to apply the mini-batch depends on the specific problems. The strategy in \cite{jin2020random} for interacting particle systems \eqref{eq:Nparticlesys} is to do random grouping. 
Intuitively, the method converges due to certain time average in time, and thus the convergence is like the convergence in the Law of Large Number (in time). See \cite{jin2020random}  for more details. Compared with the Fast Multipole Method, the accuracy is lower (half order in time step), but RBM is simpler to implement and is valid for more general potentials (\cite{li2019stochastic,jin2020convergence}). 

The RBM algorithm corresponding to \eqref{eq:Nparticlesys} is shown in Algorithm \ref{rbm}. Suppose we aim to do simulation until time $T>0$. We first choose a time step $\tau>0$ and a batch size $p\ll N, p\ge 2$ that divides $N$. Define the discrete time grids $t_k:=k\tau$, $k\in \mathbb{N}$. For each time subinterval $[t_{k-1}, t_k)$, there are two steps: (1) at time grid $t_{k-1}$, we divide the $N$ particles into $n:=N/p$ groups (batches) randomly; (2) the particles evolve with interaction inside the batches only. Here, we use the same symbols $X^i$ without causing any confusion. The Wiener process $W^i$ (Brownian motion) used in \eqref{eq:rbmSDE} is the same as in \eqref{eq:Nparticlesys}. 
\begin{algorithm}[H]
\caption{(RBM)}\label{rbm}
\begin{algorithmic}[1]
\For {$k \text{ in } 1: [T/\tau]$}   
\State Divide $\{1, 2, \ldots, N\}$ into $n=N/p$ batches randomly.
     \For {each batch  $\mathcal{C}_q$} 
     \State Update $X^i$'s ($i\in \mathcal{C}_q$) by solving the following SDE with $t\in [t_{k-1}, t_k)$.
     \begin{gather}\label{eq:rbmSDE}
            d X^i=b(X^i) dt+\frac{1}{p-1}\sum_{j\in \mathcal{C}_q,j\neq i} K(X^i-X^j)dt+\sqrt{2}\sigma\, dW^i.
      \end{gather}
      \EndFor
\EndFor
\end{algorithmic}
\end{algorithm}

As pointed out in \cite{jin2020random}, RBM is asymptotic-preserving regarding the mean field limit $N\to\infty$ (\cite{stanley1971, georges1996, lasry2007}); namely, the error bound of the one marginal distribution can be made independent of $N$ so that it can be used for large $N$ as an efficient numerical particle method for \eqref{eq:nonlinearFP}, the mean field nonlinear Fokker-Planck equation of \eqref{eq:Nparticlesys}. 
While RBM was introduced as a numerical method, it can also be viewed as a new model for the underlying particle system. A natural question, for both numerical and modeling interests, is: what is the limiting (mean field) dynamics as $N\to\infty$ for a fixed time step $\tau$?

Intuitively, in a specific realization of the random division of batches, when $N\gg 1$, the probability that two chosen particles are correlated is very small. Hence, in the $N\to\infty$ limit, the two chosen particles will be uncorrelated with probability $1$. Since the particles are exchangeable, the marginal distributions of them will be identical. Hence, let us focus on one specific particle, say $i=1$, to understand the mean field limit. Imagine that there are infinitely many particles as $N\to\infty$. For each time interval, we draw $p-1$ particles from the infinite set, and they are independent from particle $1$ by the intuition just mentioned. They share the same distribution with particle $1$. This small group then evolves with interactions between themselves to the next time point so that the distribution of particle $1$ has been changed.  At this new time point, we draw another $p-1$ particles to interact with particle $1$. In this sense, in the $N\to\infty$ limit, the $N$-particle system is then reduced to a $p$-particle system described by the following SDE system for $t\in [t_k, t_{k+1})$:
\begin{gather}\label{eq:RBMmeanfieldSDE}
dY^i=b(Y^i)\,dt+\frac{1}{p-1}\sum_{j=1,j\neq i}^{p}K(Y^i-Y^j)\,dt
+\sqrt{2}\sigma\,dW^i,~~i=1,\cdots, p,
\end{gather}
with $\{Y^i(t_k)\}$ being i.i.d., drawn from $\tilde{\mu}(\cdot, t_k)$.  
We may impose $Y^1(t_k^-)=Y^1(t_k^+)$, and for other particles $i\neq 1$,  $Y^i(t)$ in $[t_{k-1}, t_k)$ and $[t_k, t_{k+1})$ are independent so they are not continuous at $t_k$. In fact, $Y^i$'s $(i\neq 1)$ correspond to the batchmates of particle $1$ as in Algorithm \ref{rbm} so they are different particles for different iterations. Then, $\tilde{\mu}(\cdot, t_{k+1})=\mathscr{L}(Y^1(t_{k+1}^-))$, the law of $Y^1(t_{k+1}^-)$. In terms the individual particle $1$, the rest $N-1$ particles average out to an infinite pool of independent particles from particle $1$ at each time step $t_k$. This becomes the mean field limit model of RBM, and one may write out the following mean field limit for RBM in terms of the probability distribution as shown in Algorithm \ref{meanfield}, while \eqref{eq:RBMmeanfieldSDE} becomes the microscopic description.

\begin{algorithm}[H]
\caption{(Mean Field Dynamics of RBM \eqref{eq:rbmSDE})}\label{meanfield}
\begin{algorithmic}[1]
\State $\tilde{\mu}(\cdot, 0)=\mu_0$.
\For {$k \ge 0$}  

\State Let $\rho^{(p)}(\cdots, t_k)=\tilde{\mu}(\cdot, t_{k})^{\otimes p}$ be a probability measure on $(\R^{d})^{p}\cong \R^{pd}$.

\State Evolve the measure $\rho^{(p)}$ by the following Fokker-Planck equation for $t\in [t_k, t_{k+1})$:
\begin{gather}\label{eq:firstalgorithm}
            \partial_t\rho^{(p)}=-\sum_{i=1}^p 
            \nabla_{x_i}\cdot\left(\Big[b(x_i)+\frac{1}{p-1}\sum_{j=1,j\neq i}^p K(x_i-x_j)\Big]\rho^{(p)}\right)+\sigma^2\sum_{i=1}^p \Delta_{x_i}\rho^{(p)}.
\end{gather}

\State Set
\begin{gather}
\tilde{\mu}(\cdot, t_{k+1}):=\int_{(\R^{d})^{(p-1)}}
\rho^{(p)}(\cdot,dy_2,\cdots,dy_p, t_{k+1}^-).
\end{gather}

\EndFor
\end{algorithmic}
\end{algorithm}
The dynamics shown in Algorithm \ref{meanfield} naturally defines a nonlinear operator $\mathcal{G}_{\infty}: \bfP(\R^d)\to \bfP(\R^d)$ as
\begin{gather}\label{eq:Ginfty}
\tilde{\mu}(\cdot, t_{k+1})=: \mathcal{G}_{\infty}(\tilde{\mu}(\cdot, t_k)).
\end{gather}

As indicated above, the mean field limit here does not rely on the law of large numbers. Instead, it relies on the fact that the particles in one batch are unlikely to be related if $N\gg 1$. In the mean field limit dynamics of RBM, one starts with a chaotic configuration \footnote{By ``chaotic configuration", we mean that there exists a one particle distribution $f$ such that for any $j$, the $j$-marginal distribution is given by $\mu^{(j)}=f^{\otimes j}$. Such independence in a configuration is then loosely called ``chaos". If the $j$-marginal distribution is more close to $f^{\otimes j}$ for some $f$, we loosely say ``there is more chaos".}, the $p$ particles evolve with interaction to each other. Then, at the starting point of the next time interval, one imposes the chaos so that the particles are independent again.  This mean field limit is different from the standard mean field limit for system \eqref{eq:Nparticlesys}, given by \eqref{eq:nonlinearFP}: in the mean field limit of RBM, the chaos is imposed at every time step;  in the classical mean field limit for interacting particle system, the chaos is propagated to later times. 
This mechanism may allow the mean-field limit of RBM to achieve a higher convergence rate than the standard $N^{-1/2}$ convergence rate (at least $N^{-1}$ under Wasserstein-$1$ as seen in section \ref{sec:pfmeanfield}).  In spite of the difference just mentioned, we will show that  these two limiting dynamics are in fact close: in section \ref{sec:tautozero}, we will show that as $\tau\to 0$ the dynamics given by $\mathcal{G}_{\infty}$ can approximate that of the nonlinear Fokker-Planck equation \eqref{eq:nonlinearFP}. We remark that as $\tau\to 0$, the dynamics of RBM has been shown to converge to the $N$-particle system \eqref{eq:Nparticlesys} in \cite{jin2020random}.  Thus, this result implies that the two limits $\lim_{N\to \infty}$ and $\lim_{\tau\to 0}$ commute (see also section \ref{subsec:limitcommute} and Fig. \ref{fig:sde}).
 
The argument in this paper for $t\le T$ can be generalized to second order systems, which we omit, but one may see section \ref{sec:discussion} for some discussion.
 Of course, the argument for large time behavior
can be different and this is left for future study.

The rest of the paper is organized as follows. We introduce the notations and give a brief review to Wasserstein distance in section \ref{sec:setup}. The mean field limit under Wasserstein distance is shown in section \ref{sec:pfmeanfield}.
Section \ref{sec:tautozero} is devoted to the discussion of the mean field dynamics of RBM. In particular, we show that it is close to the mean-field nonlinear Fokker-Planck equation. Some discussion is performed in section \ref{sec:discussion}.  We finally conclude the work along with future directions  in section \ref{sec:conclusion}.

\section{Preliminaries and notations }\label{sec:setup}

In this section, we first introduce some assumptions and notations. Then we give a brief introduction to Wasserstein distances and prove some auxiliary results.

\subsection{Mathematical setup of the problem}

We first introduce several assumptions that will be used throughout the paper.
In these assumptions, ``being smooth" means that the functions are infinitely differentiable. Note that the conditions in these assumptions may be stronger than necessary. 
\begin{assumption}\label{ass:momentmu0}
The moments of the initial data are finite:
\begin{gather}
\int_{\R^d} |x|^q\mu_0(dx)<\infty,~\forall q\in [1,\infty).
\end{gather}
\end{assumption}
One of the following two conditions will be used for the external fields and interaction kernels. 
\begin{assumption}\label{ass:kernelfunctions}
Assume $b(\cdot): \R^d\to\R^d$ and $K(\cdot): \R^d\to \R^d$ are smooth.
Moreover,  $b(\cdot)$ is one-sided Lipschitz:
\begin{gather}
(z_1-z_2)\cdot (b(z_1)-b(z_2))\le \beta |z_1-z_2|^2
\end{gather}
for some constant $\beta$, and $K$ is Lipschitz continuous
\[
|K(z_1)-K(z_2)|\le L|z_1-z_2|.
\]
\end{assumption}

\begin{assumption}\label{ass:kernelstrong}
The fields $b(\cdot): \R^d\to\R^d$ and $K(\cdot): \R^d\to \R^d$ are smooth. Moreover,  $b(\cdot)$ is strongly confining:
\begin{gather}
(z_1-z_2)\cdot (b(z_1)-b(z_2))\le -r |z_1-z_2|^2
\end{gather}
for some constant $r>0$, and $K$ is Lipschitz continuous
$|K(z_1)-K(z_2)|\le L|z_1-z_2|$.
The parameters $r, L$ satisfy
\begin{gather}
r>2L.
\end{gather}
\end{assumption}

\begin{remark}
Compared with our previous works \cite{jin2020random,jin2020convergence}, we are not assuming the boundedness of $K$
in this paper to prove the mean-field limit and investigate the limiting dynamics. The boundedness of $K$ in our previous works is a simple condition to guarantee the boundedness of the variance of the random forces (though the boundedness of variance may also be proved without assuming boundedness of $K$). 
\end{remark}

Denote $\mathcal{C}_q^{(k)}$ ($1\le q\le n$) the batches at $t_k$ so that $\cup_{q}\mathcal{C}_q^{(k)}=\{1,\cdots, N\}$, and
\begin{gather}
\mathcal{C}^{(k)}:=\{\mathcal{C}_1^{(k)}, \cdots, \mathcal{C}_n^{(k)}\}
\end{gather}
will denote the random division of batches at $t_{k}$.
By the Kolmogorov extension theorem \cite{durrett2010}, there exists a probability space $(\Omega, \mathcal{F}, \bbP)$ such that the random variables $\{X_0^{i}, W^i, \mathcal{C}^{(k)}: 1\le i\le N, k\ge 0\}$ are defined on this probability space and are all independent.
We will use $\E$ to denote the integration on $\Omega$ with respect the probability measure $\bbP$.
For the convenience of the analysis, we introduce the $L^2(\bbP)$
norm as:
\begin{gather}
\|v\|:=\sqrt{\mathbb{E}|v|^2}.
\end{gather}
Define the filtration $\{\mathcal{F}_{k}\}_{k\ge 0}$ by
\begin{gather}
\mathcal{F}_{k-1}=\sigma(X_0^i, W^i(t), \mathcal{C}^{(j)}; t\le t_{k-1}, j\le k-1).
\end{gather}
Clearly, $\mathcal{F}_{k-1}$ is the $\sigma$-algebra generated by the initial values $X_0^i$ ($i=1,\ldots, N$), $W^i(t)$, $t\le t_{k-1}$, and $\mathcal{C}^{(j)}$, $j\le k-1$. Hence, $\mathcal{F}_{k-1}$ contains the information of how batches are constructed for $t\in [t_{k-1}, t_k)$.

\subsection{A review of the Wasserstein distance}

Consider a domain $O\subset \mathbb{R}^n$ where $n$ is a positive integer. We denote $\bfP(O)$ the set of probability measures on $O$. Let $\mu,\nu\in \bfP(O)$ be two probability measures and $c: O\times O \to [0,\infty)$ be a cost function. One solves the following optimization problem for the optimal transport:
\[
\min_{\gamma}\left\{\int_{O\times O} c \,d\gamma \Big| \gamma\in \Pi(\mu,\nu) \right\} ,
\]
where $\Pi(\mu, \nu)$ is the set of ``transport plans'', i.e., a joint measure on $O\times O$ such that the marginal measures are $\mu$ and $\nu$ respectively. If there is a map $T: O\to O$ such that $(I\times T)_{\#}\mu$ minimizes the target function,  then $T$ is called an optimal transport map. Here, $I$ is the identity map and
\begin{gather}\label{eq:pushforward}
(I\times T)_{\#}\mu (E):=\mu((I\times T)^{-1}(E)),~\forall E\subset O\times O,~\mathrm{measurable}.
\end{gather}

Choosing the particular cost function $c(x,y)=|x-y|^\sq$, $\sq\in [1,\infty)$, one can define the Wasserstein-$\sq$ distance $W_{\sq}(\mu, \nu)$ as
\begin{gather}\label{eq:Wassersteindef}
W_\sq(\mu, \nu):=\left(\inf_{\gamma\in \Pi(\mu,\nu)}\int_{O\times O} |x-y|^{\sq} d\gamma\right)^{1/\sq}.
\end{gather}

It has been shown (see \cite{Benamou2000}, \cite[Chap. 5]{santambrogio15}) that the Wasserstein-$\sq$ distance between two probability measures $\mu$ and $\nu$ is also given by
\begin{gather}\label{eq:Wp}
W_{\sq}^{\sq}(\mu, \nu)=\min\left\{\int_0^1\|v\|^{\sq}_{L^{\sq}(\rho)}dt: \partial_t\rho+\nabla\cdot(\rho v)=0, \rho|_{t=0}=\mu, \rho|_{t=1}=\nu \right\} ,
\end{gather}
where $\rho$ is a (time-parametrized) nonnegative measure and
\begin{gather}
\|v\|^{\sq}_{L^{\sq}(\rho)}:=\int_{O}|v|^{\sq} \rho(dx).
\end{gather}
 Hence, $v$ can be thought as the particle velocity for the optimal transport, as explained in \cite[Chap. 5]{santambrogio15}.
With this explanation, one can then understand $\bfP(O)$ equipped with $W_2$ distance as a Riemannian manifold so that the Fokker-Planck equations can be formulated as a class of gradient flows on this manifold (see, for example, \cite{jordan1998}, \cite[Chap. 8]{villani2003topics}).

Below, we note a useful lemma that relates the total variation distance to the $W_{\sq}$ distance. This is intrinsically \cite[Proposition 7.10]{villani2003topics} and the version here is more convenient for our purpose in this paper. Recall the Jordan decomposition for a signed measure $\mu=\mu^+-\mu^-$ defined on a Polish space $\mathcal{E}$. Then, define $|\mu|:=\mu^++\mu^-$, and the total variation norm of the signed measure by
\begin{gather}
\|\mu\|_{TV}:=|\mu|(\mathcal{E})=\mu^{+}(\mathcal{E})+\mu^{-}(\mathcal{E}).
\end{gather}
\begin{lemma}\label{lmm:W2byTV}
Let $\mu, \nu\in \bfP(\R^d)$ be two different probability measures on $\mathcal{E}=\R^d$. Let $\delta\ge 0$ and $\hat{\mu}$ be a measure such that
$|\mu-\nu|(E)\le \delta\, \hat{\mu}(E)$, for any Borel measurable $E$.
Suppose for $\sq\ge 1$, $M_{\sq}:=\inf_{x_0}\int_{\R^d} |x-x_0|^{\sq} \hat{\mu}(dx)<\infty$.
Then,
\begin{gather}
W_{\sq}(\mu,\nu)\le 2^{1-1/\sq}(M_{\sq}\delta)^{1/\sq}.
\end{gather} 

In particular, choosing $\delta=\|\mu-\nu\|_{TV}$, $\hat{\mu}:=\frac{1}{\|\mu-\nu\|_{TV}}|\mu-\nu|$ yields
\[
W_{\sq}(\mu,\nu)\le 2^{1-1/\sq}(M_q\|\mu-\nu\|_{TV})^{1/\sq}.
\]
\end{lemma}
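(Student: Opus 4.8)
The plan is to exhibit an explicit coupling of $\mu$ and $\nu$ that leaves the common mass untouched and transports only the ``defect'' measure $|\mu-\nu|$, and then to bound the resulting transport cost by a $\sq$-th moment of $\hat\mu$.

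First I would record the Jordan decomposition $\mu-\nu=(\mu-\nu)^+-(\mu-\nu)^-$ and set $\pi:=\mu\wedge\nu$ for the common part, so that $\mu=\pi+(\mu-\nu)^+$ and $\nu=\pi+(\mu-\nu)^-$. Since $\mu,\nu\in\bfP(\R^d)$, evaluating both identities at $\R^d$ gives $(\mu-\nu)^+(\R^d)=(\mu-\nu)^-(\R^d)=:m$, with $m>0$ because $\mu\neq\nu$ (if $m=0$ the inequality is trivial). Next, define
\[
\gamma:=(I\times I)_{\#}\pi+\frac1m\,(\mu-\nu)^+\otimes(\mu-\nu)^-.
\]
Its total mass is $\pi(\R^d)+m=1$, its first marginal is $\pi+(\mu-\nu)^+=\mu$, and its second marginal is $\pi+(\mu-\nu)^-=\nu$, so $\gamma\in\Pi(\mu,\nu)$. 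Because the first term is carried by the diagonal $\{x=y\}$, it costs nothing, hence
\[
W_{\sq}^{\sq}(\mu,\nu)\le\int_{\R^d\times\R^d}|x-y|^{\sq}\,d\gamma=\frac1m\iint|x-y|^{\sq}\,(\mu-\nu)^+(dx)\,(\mu-\nu)^-(dy).
\]

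Then, for an arbitrary reference point $x_0$, I would apply the elementary convexity bound $|x-y|^{\sq}\le 2^{\sq-1}\bigl(|x-x_0|^{\sq}+|y-x_0|^{\sq}\bigr)$, valid for $\sq\ge1$, and integrate out the variable that does not appear in each term (each of $(\mu-\nu)^{\pm}$ has mass $m$). This yields
\[
W_{\sq}^{\sq}(\mu,\nu)\le 2^{\sq-1}\Bigl(\int|x-x_0|^{\sq}(\mu-\nu)^+(dx)+\int|y-x_0|^{\sq}(\mu-\nu)^-(dy)\Bigr)=2^{\sq-1}\int|x-x_0|^{\sq}\,|\mu-\nu|(dx).
\]
Finally, the hypothesis $|\mu-\nu|(E)\le\delta\,\hat\mu(E)$ gives $\int|x-x_0|^{\sq}|\mu-\nu|(dx)\le\delta\int|x-x_0|^{\sq}\hat\mu(dx)$; taking the infimum over $x_0$ produces $W_{\sq}^{\sq}(\mu,\nu)\le 2^{\sq-1}\delta M_{\sq}$, i.e.\ $W_{\sq}(\mu,\nu)\le 2^{1-1/\sq}(M_{\sq}\delta)^{1/\sq}$, which is the claimed bound. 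The ``in particular'' statement is then the special case $\delta=\|\mu-\nu\|_{TV}$, $\hat\mu=|\mu-\nu|/\|\mu-\nu\|_{TV}$, for which $|\mu-\nu|=\delta\hat\mu$ holds with equality.

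I do not anticipate a genuine obstacle: the proof is essentially bookkeeping around the ``keep the common mass, move only the defect'' coupling. The only points needing a little care are the degenerate cases ($\mu=\nu$, already handled, and $M_{\sq}=\infty$, excluded by assumption) and checking measurability of the diagonal together with the fact that $\gamma$ is a bona fide transport plan; the conceptual content is merely that only $|\mu-\nu|$, and its $\sq$-th moment relative to $\hat\mu$, enters the estimate, not $\mu$ and $\nu$ separately.
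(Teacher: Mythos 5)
Your proof is correct and is essentially the paper's own argument, just phrased with the Jordan decomposition notation $(\mu-\nu)^{\pm}$ in place of the paper's $\mu_1,\nu_1$; the coupling $\gamma$ you build (diagonal pushforward of $\mu\wedge\nu$ plus the normalized product of the positive and negative parts) is identical to the paper's $\pi$, and the convexity bound and infimum over $x_0$ are the same.
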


\begin{proof}
We consider $\mu_m:=\mu \wedge \nu$, which is defined by
\[
\mu_m(E)=\min(\mu(E), \nu(E)),~\forall E \text{~measurable}.
\]
Define two measures
$\mu_1:=\mu-\mu_m$ and $\nu_1:=\nu-\nu_m$.
Then, 
\begin{gather}
\|\mu-\nu\|_{TV}=\|\mu_1\|_{TV}+\|\nu_1\|_{TV},~~\mu_1+\nu_1\le \delta\,\hat{\mu}.
\end{gather}

Construct the joint distribution (noting $\|\mu_1\|_{TV}=\|\nu_1\|_{TV}$)
\[
d\pi:=\pi(dx, dy)=\frac{1}{\|\mu_1\|_{TV}}\mu_1(dx)\otimes \nu_1(dy)+Q_{\#}\mu_m (dx, dy),
\]
with $Q(x)=(x, x)$ and $Q_{\#}$ is the standard pushforward map as in \eqref{eq:pushforward}. Clearly, the marginal distributions of $\pi$ are $\mu$ and $\nu$ respectively. 

Then, fix $x_0\in \R^d$.
\[
\begin{split}
\int_{\R^d\times\R^d} |x-y|^{\sq} d\pi &=\frac{1}{\|\mu_1\|_{TV}}\int_{\R^d\times\R^d} |x-y|^{\sq} \mu_1(dx)\otimes \nu_1(dy) \\
&\le \frac{2^{\sq-1}}{\|\mu_1\|_{TV}}\int_{\R^d\times\R^d} (|x-x_0|^{\sq}+|y-x_0|^{\sq})\mu_1(dx)\otimes \nu_1(dy)\\
& = 2^{\sq-1}
\int_{\R^d}|x-x_0|^{\sq} [\mu_1+\nu_1](dx) .
\end{split}
\]
Noting $\mu_1+\nu_1\le \delta\, \hat{\mu}$, the claim follows by taking infimum on $x_0$.
\end{proof}

\section{The mean field limit of RBM with \texorpdfstring{$\tau$}{Lg} fixed}\label{sec:pfmeanfield}

\begin{figure}
\begin{center}
	\includegraphics[width=0.8\textwidth]{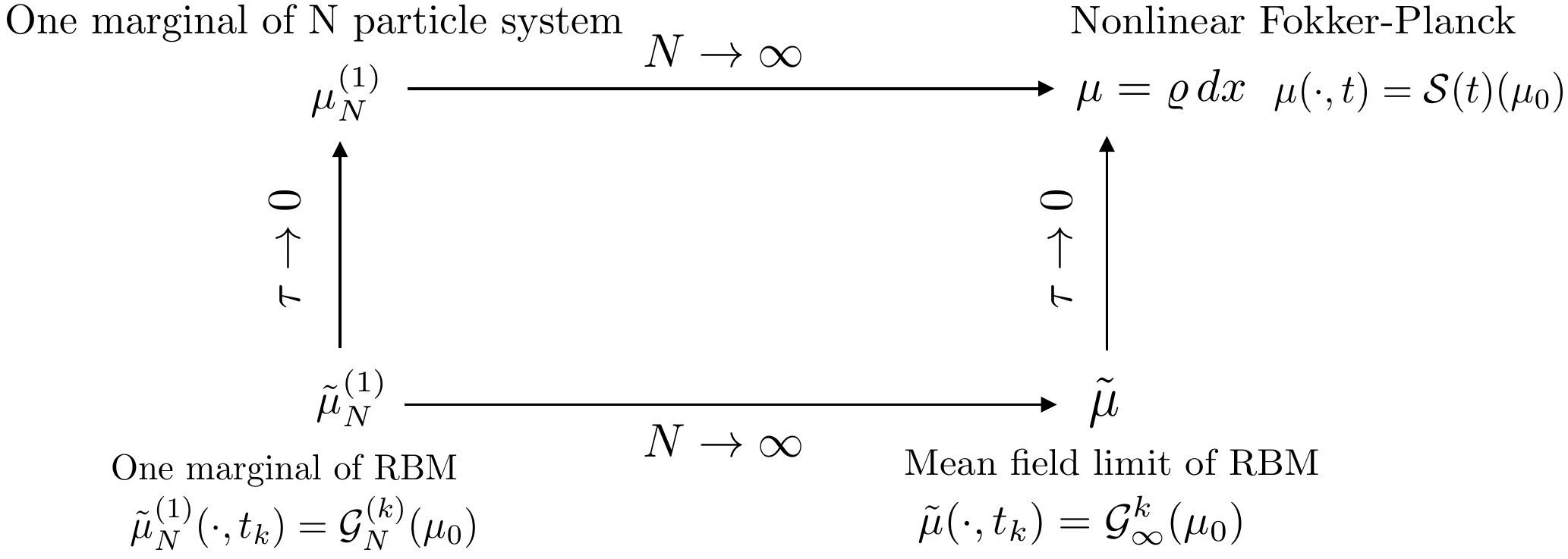}
\end{center}
\caption{Illustration of the various operators and the asymptotic limits.}
\label{fig:operator}
\end{figure}

Starting with $\mu_0$, after $k$ steps of the dynamics given in \eqref{eq:Ginfty}, one arrives at 
\[
\mathcal{G}_{\infty}^{k}(\mu_0)=\mathcal{G}_{\infty}\circ \cdots\circ \mathcal{G}_{\infty}(\mu_0),~  \text{($k$ copies)}, 
\]
which is expected to be the mean field limit of RBM after $k$ steps. Corresponding to this, one may define the operator $\mathcal{G}_N^{(k)}: \bfP(\R^d)\to \bfP(\R^d)$
for RBM with $N$ particles as follows.
Let $X^i_0$'s be i.i.d., drawn from $\mu_0$. Consider \eqref{eq:rbmSDE} and define
\begin{gather}
\mathcal{G}_N^{(k)}(\mu_0):=\mathscr{L}(X^1(t_k)),
\end{gather}
where recall that $\mathscr{L}(X^1)$ means the law of $X^1$, thus the one marginal distribution. Conditioning on a specific sequence of random batches, the particles are not exchangeable. However,
when one considers the mixture of all possible sequences of random batches, the laws of the particles $X^i(t_k)$ ($1\le i\le N$) are identical. In Fig. \ref{fig:operator}, we illustrate these definitions and various limits.

The semigroup property is closely related to the Markovian property. For the $\mathcal{G}_{\infty}$ dynamics, knowing the marginal distribution of $X^1$ can fully determine the probability transition. However, knowing only the marginal distribution is not enough for $\mathcal{G}_{N}^{(k)}$ dynamics, and the joint distribution must be known. Hence, we remark that
\begin{lemma}
 $\{\mathcal{G}_{\infty}^k: k\ge 1\}$ forms a nonlinear semigroup while
$\{\mathcal{G}_N^{(k)}: k\ge 1\}$ is not a semigroup.
\end{lemma}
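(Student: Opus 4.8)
\emph{The semigroup property of $\{\mathcal{G}_{\infty}^k\}$ is essentially built into Algorithm~\ref{meanfield}, and I would make this precise in two short steps.} First I would verify that $\mathcal{G}_{\infty}$ is a well-defined single-valued self-map of $\bfP(\R^d)$ (restricted to measures with finite moments, cf.\ Assumption~\ref{ass:momentmu0}): for any such $\mu$ the tensor power $\mu^{\otimes p}$ is a well-defined probability measure on $(\R^d)^p$, and the linear Fokker--Planck equation \eqref{eq:firstalgorithm} with this initial datum has a unique distributional solution on $[0,\tau]$, since it is the forward Kolmogorov equation of the SDE system \eqref{eq:RBMmeanfieldSDE}, whose drift is one-sided Lipschitz with at most linear growth under Assumption~\ref{ass:kernelfunctions} or~\ref{ass:kernelstrong}; taking the first marginal of $\rho^{(p)}(\cdot,\tau^-)$ then yields $\mathcal{G}_{\infty}(\mu)$ unambiguously (the map is nonlinear in $\mu$ precisely because of the tensorization step). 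Second, a one-line induction on $k$ using \eqref{eq:Ginfty} identifies the output of $k$ rounds of Algorithm~\ref{meanfield} from $\mu_0$ with the $k$-fold composition $\mathcal{G}_{\infty}\circ\cdots\circ\mathcal{G}_{\infty}$. The only substantive observation is \emph{Markovianity and time-homogeneity}: step~3 of the algorithm discards every correlation and re-initializes from $\tilde\mu(\cdot,t_k)^{\otimes p}$, and each step evolves the same autonomous equation for the same duration $\tau$; hence the transition from $t_k$ to $t_{k+1}$ depends on the history only through the one-particle marginal $\tilde\mu(\cdot,t_k)$, and therefore $\mathcal{G}_{\infty}^{k+l}=\mathcal{G}_{\infty}^k\circ\mathcal{G}_{\infty}^l$ for all $k,l\ge 1$.

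\emph{For the failure of the semigroup property of $\{\mathcal{G}_N^{(k)}\}$,} I would first isolate the obstruction and then exhibit a concrete instance. After one RBM step the joint law $\mathscr{L}\big(X^1(t_1),\dots,X^N(t_1)\big)$ is exchangeable but not a product of its marginals — in-batch interaction correlates the particles — so the single marginal $\mathcal{G}_N^{(1)}(\mu_0)$ no longer carries the data (the joint law) needed to run the next step, and $\mathcal{G}_N^{(1)}\big(\mathcal{G}_N^{(1)}(\mu_0)\big)$, which re-imposes independence, has no reason to equal $\mathcal{G}_N^{(2)}(\mu_0)$. To make this rigorous it suffices to produce one instance with $\mathcal{G}_N^{(2)}\ne\mathcal{G}_N^{(1)}\circ\mathcal{G}_N^{(1)}$.

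\emph{The counterexample I would use} is $d=1$, $\sigma=0$, $b\equiv 0$, $K(x)=x$ (which satisfies Assumption~\ref{ass:kernelfunctions} with $\beta=0$, $L=1$; replacing $b$ by $b(x)=-Rx$ with $R>2$ also covers Assumption~\ref{ass:kernelstrong}), $N=p=2$, $\mu_0=\mathcal{N}(0,1)$. Then the single batch contains both particles and \eqref{eq:rbmSDE} becomes $\dot X^1=X^1-X^2$, $\dot X^2=X^2-X^1$, which I would solve explicitly: with $a:=e^{2\tau}>1$ one gets $X^1(t_1)=\tfrac{1+a}{2}X^1_0+\tfrac{1-a}{2}X^2_0$. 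For i.i.d.\ $X^i_0\sim\mathcal{N}(0,s)$ this is centered Gaussian with variance $\tfrac{1+a^2}{2}s$, so on centered Gaussians $\mathcal{G}_2^{(1)}$ is the variance map $s\mapsto\tfrac{1+a^2}{2}s$ and $\mathcal{G}_2^{(1)}\circ\mathcal{G}_2^{(1)}(\mathcal{N}(0,1))=\mathcal{N}\!\big(0,\tfrac{(1+a^2)^2}{4}\big)$, whereas running the system for time $2\tau$ (i.e.\ replacing $a$ by $a^2$) gives $\mathcal{G}_2^{(2)}(\mathcal{N}(0,1))=\mathcal{N}\!\big(0,\tfrac{1+a^4}{2}\big)$. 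Since $\tfrac{1+a^4}{2}-\tfrac{(1+a^2)^2}{4}=\tfrac{(a^2-1)^2}{4}>0$ for $\tau>0$, the two laws differ, so $\{\mathcal{G}_N^{(k)}\}$ is not a semigroup; the same mechanism — negative in-batch correlation, destroyed by re-imposing independence — produces counterexamples for genuine batch sizes $2\le p<N$ as well.

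\emph{Main obstacle.} There is no deep difficulty: the semigroup half is almost tautological once well-posedness and the re-initialization step are noted, and the negative half is a two-line Gaussian computation. The only points requiring care are checking that the explicit example meets the standing assumptions and stating clearly that a single counterexample defeats the semigroup property of the whole family once the non-product nature of the one-step joint law has been pinpointed.
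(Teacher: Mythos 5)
The paper states this lemma as an unproved remark and supplies only the informal explanation in the preceding paragraph (the transition under $\mathcal{G}_\infty$ is determined by the one-particle marginal, while for $\mathcal{G}_N^{(k)}$ the joint law is needed). Your proposal goes further and is essentially correct: the semigroup half is, as you note, nearly tautological since $\mathcal{G}_\infty^k$ is by definition the $k$-fold iterate of a fixed self-map of $\bfP(\R^d)$, the substantive content being well-posedness of one step plus the Markovian re-initialization in step 3 of Algorithm~\ref{meanfield}; and your Gaussian computation correctly exhibits $\mathcal{G}_N^{(2)}\neq\mathcal{G}_N^{(1)}\circ\mathcal{G}_N^{(1)}$ (I checked the variances: $(1+a^4)/2$ versus $(1+a^2)^2/4$, difference $(a^2-1)^2/4>0$). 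This is a concrete, verifiable argument that the paper omits, so it adds value.

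One caveat worth flagging more forcefully than you do: your counterexample uses $N=p=2$, in which case $n=N/p=1$, the random division is trivial, and RBM degenerates to the full $N$-particle system. The paper explicitly imposes $p\ll N$, so a reader may object that you are not really testing RBM. You acknowledge this and assert the same mechanism works for $2\le p<N$, which is believable (after one step, same-batch pairs are correlated, and the re-shuffled second step mixes correlated with independent pairs, whereas $\mathcal{G}_N^{(1)}\circ\mathcal{G}_N^{(1)}$ throws all correlations away), but you do not actually carry it out. To make the proof airtight within the paper's standing hypotheses you should either compute one nondegenerate case (e.g. $N=4$, $p=2$, linear $K$ and Gaussian $\mu_0$: everything remains a linear Gaussian computation, you just have to average over the three equally likely pairings at each step) or state explicitly that you are proving the negative claim only under the literal reading that allows $n=1$. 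Apart from that presentational gap, the argument is sound and sharper than what the paper provides.
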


We first of all introduce some concepts. 
For each particle $i$, we define a sequence of lists $\{L_i^{(k)}: k\ge 0\}$ associated with $i$, given as follows:
\begin{enumerate}[(a)]
\item $L_i^{(0)}=\{i\}$.
\item For $k\ge 1$, let $C_q^{(k-1)}$ be the batch that particle $i$ stays in for $t\in [t_{k-1}, t_k)$. Then,
\begin{gather}
L_i^{(k)}=\cup_{j\in C_q^{(k-1)}}L_j^{(k-1)}.
\end{gather}
\end{enumerate}

\begin{figure}
\begin{center}
	\includegraphics[width=0.8\textwidth]{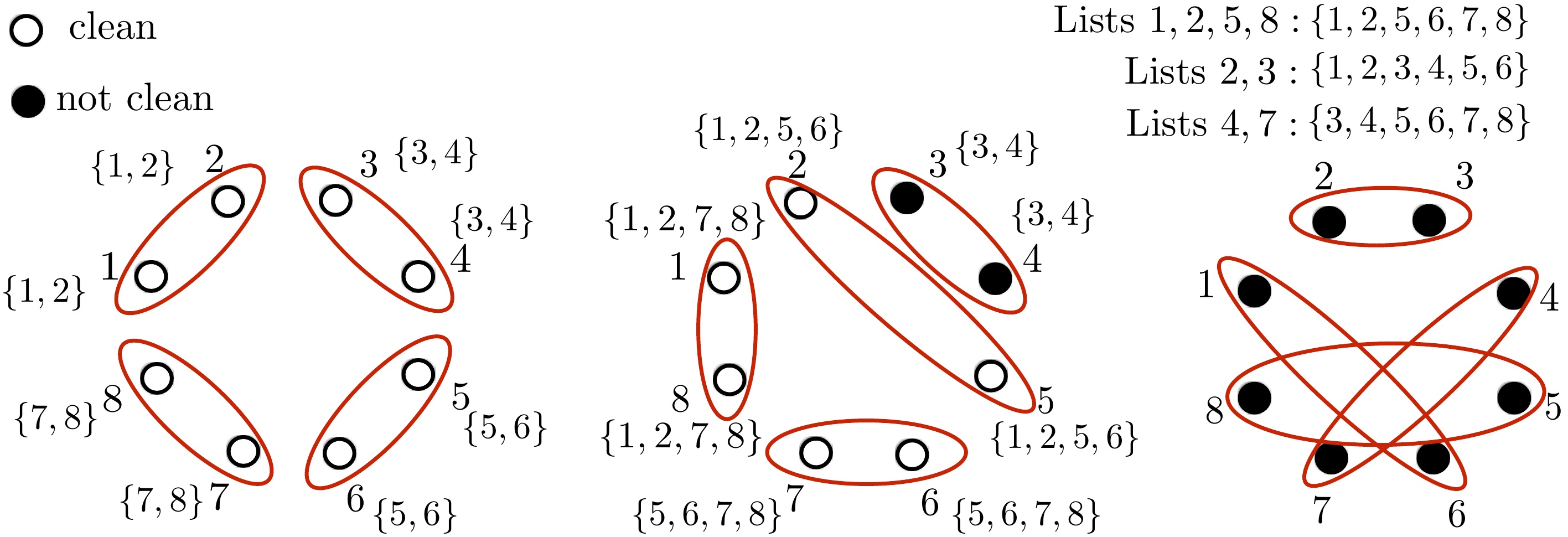}
\end{center}
\caption{Illustration of the definitions of $L_i^{(k)}$ and particles being clean. The three pictures are for $t_1^-, t_2^-, t_3^-$ respectively with $N=8$, $p=2$. The lists (i.e., $\{1, 2\}, \{1,2,5,6\}$ etc) indicate $L_i^{(k)}$ for the corresponding particles.}
\label{fig:clean}
\end{figure}
Here, $L_i^{(k)}$ can be viewed as the particles that have impacted $i$ for $t<t_k$.
Clearly, a particle $i_1\in L_i^{(k)}$ might not have been a batchmate of $i$. It could have been a batchmate of $i_2$, and then $i_2$ was a batchmate of $i$ at some time. The important observation is that if $L_i^{(k)}$ and $L_j^{(k)}$ do not intersect for a given sequence of random batches, then particles $i$ and $j$ are independent at $t_k^-$. Note that we are not claiming all particles in $L_j^{(k)}$ are independent of those in $L_j^{(k)}$ at $t_k^-$. In fact, it is possible that some $i_1\in L_i^{(k)}$ and $j_1\in L_j^{(k)}$  are in the same batch on $[t_{k-1}, t_k)$. However, $i_1$ and $j_1$ must be independent at the times when they were added to the batches that eventually impact $i,j$ at $t_k^-$.
This motivates us to define the following.
\begin{definition}
We say particle $i$ is clean on $[t_k, t_{k+1})$ if the batch $\mathcal{C}_q^{(k)}$ that contains $i$ at $t_k^+$ satisfies the following: (1) any $j\in \mathcal{C}_q^{(k)}$ is clean at $t_{k}^-$; (2) any $j,\ell\in \mathcal{C}_q^{(k)}$ with $j\neq \ell$, $L_j^{(k)}$ and $L_{\ell}^{(k)}$ do not intersect.
\end{definition}

Fig. \ref{fig:clean} gives the illustration for the definitions of $L_i^{(k)}$ and particles being clean. Plainly speaking, a particle $i$ is ``clean" at $t_k^-$ if its batchmates at $t<t_k$ were mutually independent and independent to $i$ when they interacted.

Let us use the symbol $|A|$ below for a set $A$ to mean the cardinality of $A$. The following observation is useful for our argument later.
\begin{lemma}\label{lmm:cleandistribution}
Consider a fixed sequence of divisions of random batches $\{\mathcal{C}^{(\ell)}\}_{\ell\le k-1}$. 
\begin{enumerate}[(i)]
\item It holds that
\[
|L_i^{(k)}|\le p^k,
\]
and the particle $i$ is clean at $t_k^-$ if and only if the equality holds. 
\item The distribution of $X^i$ for a clean particle $i$ at $t_k^-$ is $\mathcal{G}_{\infty}^k (\mu_0)$.
\end{enumerate}
\end{lemma}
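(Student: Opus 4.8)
For part (i), the plan is to prove the bound $|L_i^{(k)}| \le p^k$ by induction on $k$, together with the equality characterization. The base case $k=0$ is immediate since $L_i^{(0)} = \{i\}$ and $p^0 = 1$, and particle $i$ is trivially clean at $t_0^-$. For the inductive step, suppose the claim holds at level $k-1$ for all particles. Let $\mathcal{C}_q^{(k-1)}$ be the batch containing $i$ on $[t_{k-1}, t_k)$, which has exactly $p$ elements. Then $L_i^{(k)} = \bigcup_{j \in \mathcal{C}_q^{(k-1)}} L_j^{(k-1)}$ is a union of $p$ sets, each of cardinality at most $p^{k-1}$ by the inductive hypothesis, so $|L_i^{(k)}| \le p \cdot p^{k-1} = p^k$. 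Equality holds if and only if each $L_j^{(k-1)}$ ($j \in \mathcal{C}_q^{(k-1)}$) has cardinality exactly $p^{k-1}$ \emph{and} these $p$ lists are pairwise disjoint. By the inductive hypothesis the first condition says each such $j$ is clean at $t_{k-1}^-$, and the second is precisely the non-intersection condition in the definition of ``clean''; together these two are exactly the definition of $i$ being clean on $[t_{k-1}, t_k)$, i.e. clean at $t_k^-$. So equality characterizes cleanliness, closing the induction.

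For part (ii), the strategy is again induction on $k$, tracking not just the marginal law of a single clean particle but the joint law of the particles in $L_i^{(k)}$. The stronger statement I would carry through the induction is: if $i$ is clean at $t_k^-$, then the family $\{X^j(t_k^-) : j \in L_i^{(k)}\}$ is i.i.d.\ with common law $\mathcal{G}_\infty^k(\mu_0)$. At $k=0$ this is just the i.i.d.\ initialization from $\mu_0 = \tilde\mu(\cdot,0)$. For the step, let $\mathcal{C}_q^{(k-1)}$ be the batch of $i$ on $[t_{k-1},t_k)$; since $i$ is clean at $t_k^-$, each $j \in \mathcal{C}_q^{(k-1)}$ is clean at $t_{k-1}^-$, and the lists $\{L_j^{(k-1)}\}_{j\in\mathcal{C}_q^{(k-1)}}$ are pairwise disjoint. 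By the inductive hypothesis, for each such $j$ the block $\{X^\ell(t_{k-1}^-):\ell\in L_j^{(k-1)}\}$ is i.i.d.\ $\mathcal{G}_\infty^{k-1}(\mu_0)$; the key point is that distinct blocks involve disjoint index sets and (because the conditioning is on a fixed batch sequence and the driving Brownian motions $W^\ell$ and initial data are mutually independent) are built from independent sources of randomness, hence the full collection $\{X^\ell(t_{k-1}^-) : \ell \in L_i^{(k)}\}$ is i.i.d.\ $\mathcal{G}_\infty^{k-1}(\mu_0)$. In particular the $p$ particles of $\mathcal{C}_q^{(k-1)}$ start the interval $[t_{k-1},t_k)$ i.i.d.\ from $\tilde\mu(\cdot,t_{k-1})$, and they evolve on this interval by the coupled SDE system \eqref{eq:rbmSDE} for that batch, which is exactly the $p$-particle system \eqref{eq:RBMmeanfieldSDE} defining $\mathcal{G}_\infty$; hence $X^i(t_k^-) \sim \tilde\mu(\cdot,t_k) = \mathcal{G}_\infty(\tilde\mu(\cdot,t_{k-1})) = \mathcal{G}_\infty^k(\mu_0)$. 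To propagate the stronger i.i.d.\ statement one more level, one observes that the other batches acting on the remaining particles of $L_i^{(k)}$ on $[t_{k-1},t_k)$ are driven by disjoint, independent noise and act on disjoint, independent blocks, so independence across the whole list is preserved; within each block of the \emph{new} list $L_i^{(k)}$ cleanliness of $i$ guarantees the relevant sub-lists were disjoint, so each evolves as an independent copy of the $\mathcal{G}_\infty$ update, giving the i.i.d.\ conclusion at level $k$.

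The main obstacle is bookkeeping the independence correctly in the inductive step of (ii): one must be careful that the decomposition of $L_i^{(k)}$ into the blocks $\{L_j^{(k-1)} : j \in \mathcal{C}_q^{(k-1)}\}$ is into genuinely disjoint sets (which is where cleanliness at $t_k^-$ is used), and that the SDE evolution on each time subinterval couples \emph{only} particles within a common batch and uses \emph{only} the Brownian motions of those particles — so that disjoint blocks remain probabilistically independent after the evolution. Formalizing ``independent sources of randomness give independent outputs'' requires noting that, conditioned on the fixed batch sequence $\{\mathcal{C}^{(\ell)}\}_{\ell \le k-1}$, the map from $(X^\ell_0, W^\ell)_{\ell \in L_i^{(k)}}$ to $(X^\ell(t_k^-))_{\ell \in L_i^{(k)}}$ factors as a product over the disjoint blocks at each level, which is a straightforward but slightly tedious measurability argument. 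Everything else — the cardinality count, the identification of the batch dynamics with \eqref{eq:RBMmeanfieldSDE}, and the identification of the resulting marginal with $\mathcal{G}_\infty^k(\mu_0)$ — is routine once the independence structure is pinned down.
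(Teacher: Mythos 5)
Part (i) of your proposal is correct and is exactly the induction the paper has in mind.

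For part (ii), there is a genuine bug in your auxiliary inductive invariant. You propose to carry through the induction the statement: \emph{if $i$ is clean at $t_k^-$, then the family $\{X^j(t_k^-) : j \in L_i^{(k)}\}$ is i.i.d.\ with common law $\mathcal{G}_\infty^k(\mu_0)$}. This is false already at $k=1$. Indeed, $L_i^{(1)} = \mathcal{C}_q^{(0)}$ is precisely the batch containing $i$ during $[t_0, t_1)$; these $p$ particles start from i.i.d.\ initial data but then evolve by the \emph{coupled} SDE system \eqref{eq:rbmSDE} on $[t_0, t_1)$, so at $t_1^-$ they are correlated (for any nontrivial $K$). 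The same issue propagates to all $k\ge 1$. The intermediate version you use in the inductive step, that $\{X^\ell(t_{k-1}^-) : \ell \in L_i^{(k)}\}$ is i.i.d., fails for the same reason: within each sub-block $L_j^{(k-1)}$ there are particles that have interacted with $j$ in earlier intervals and hence are correlated with it at $t_{k-1}^-$.

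What you actually need, and what makes the calculation you do go through, is a weaker and different auxiliary statement. Conditioned on the batch sequence $\{\mathcal{C}^{(\ell)}\}_{\ell\le k-1}$, the trajectory $X^j(t)$ for $t<t_k$ is measurable with respect to the $\sigma$-algebra generated by $\{X^m_0, W^m : m\in L_j^{(k)}\}$. This is immediate by induction on $k$ from the structure of the dynamics (on $[t_{\ell-1}, t_\ell)$ particle $j$ is coupled only to its batchmates, whose lists are contained in $L_j^{(\ell)}$). Combined with disjointness of the lists $L_j^{(k-1)}$ for $j\in \mathcal{C}_q^{(k-1)}$, this gives mutual independence of the $p$ batchmates at $t_{k-1}^-$, which together with the (marginal) inductive hypothesis $X^j(t_{k-1}^-)\sim \mathcal{G}_\infty^{k-1}(\mu_0)$ yields the i.i.d.\ starting configuration on $[t_{k-1}, t_k)$. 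From there the rest of your argument, identifying the batch dynamics with \eqref{eq:RBMmeanfieldSDE} and the marginal with $\mathcal{G}_\infty^k(\mu_0)$, is correct and matches the paper's (sketchy) proof. So the overall strategy is right, but the specific invariant you state is not something you can propagate, and as written the induction does not close.
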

\begin{proof}
The proof is a straightforward induction. Here, let us just mention the proof of the second claim briefly.

For $k=0$, the statement is trivial.  Now, suppose the statement is true for all $k\le m-1$. We now consider $k=m$.

For the given sequence of random batches $\{\mathcal{C}^{(\ell)}\}_{\ell\le k-1}$, that a particle $i$  is clean at $t_{m}^-$
means that on $[t_{m-1}, t_m)$, the particles  in the batch for $i$ are independent of $i$ at $t_{m-1}$.
By the induction assumption, the distribution of one particle at $t_{m-1}$ is given by $\tilde{\mu}(\cdot, t_{m-1})=\mathcal{G}_{\infty}^{m-1}(\mu_0)$.  By the independence, the joint distribution of them at $t_{m-1}$ is therefore 
\[
\rho^{(p)}(\cdots, t_{m-1})=\tilde{\mu}(\cdot, t_{m-1})^{\otimes p}.
\]
From $t_{m-1}$ to $t_m$, the evolution of the joint distribution obeys the Fokker-Planck equation \eqref{eq:firstalgorithm}.
Hence, at $t_m^-$, the distribution of particle $i$ is given by $\mathcal{G}_{\infty}^m (\mu_0)$ by the definition (equation \eqref{eq:Ginfty}).
\end{proof}

Let $A_k$ denote the set of particles that are clean at $t_k^-$. Then, 
\[
A_1=A_0=\{1, \cdots, N\}.
\]
For $k\ge 2$, one has
\begin{gather}
\begin{split}
A_k=\Big\{i\in A_{k-1}: i\in \mathcal{C}_q^{(k-1)},~&\forall j, \ell \in \mathcal{C}_q^{(k-1)},j\neq \ell, \\
~& j\in A_{k-1},\ell\in A_{k-1}, L_j^{(k-1)}\cap L_{\ell}^{(k-1)}=\emptyset \Big\}.
\end{split}
\end{gather}

Denote
\begin{gather}
\epsilon_{k}:=\mathbb{P}(1\notin A_k).
\end{gather}
Note that by symmetry, $\epsilon_k$ is also the probability that particle $i$ is not clean. We state our main result.
\begin{theorem}\label{thm:meanfield}
Let $\sq\in [1,\infty)$. It holds that
\begin{gather}
W_{\sq}(\mathcal{G}_{\infty}^k(\mu_0), \mathcal{G}_N^{k}(\mu_0))\le C\exp(\alpha t_k)\epsilon_{k}^{1/\sq},
\end{gather}
for some $\alpha>0$. In the strong confinement case $\alpha=0$.
\end{theorem}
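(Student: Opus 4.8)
The plan is to compare the two one-marginal laws by conditioning on the sequence of random batches and separating the event that particle $1$ is clean from its complement. Fix a realization of $\{\mathcal{C}^{(\ell)}\}_{\ell\le k-1}$. By Lemma \ref{lmm:cleandistribution}(ii), on the event $\{1\in A_k\}$ the conditional law of $X^1(t_k^-)$ is exactly $\mathcal{G}_{\infty}^k(\mu_0)$. Hence $\mathcal{G}_N^k(\mu_0)$, the unconditional law of $X^1(t_k)$, is a mixture $(1-\epsilon_k)\,\mathcal{G}_{\infty}^k(\mu_0) + \epsilon_k\,\nu_k$, where $\nu_k$ is the law of $X^1(t_k)$ conditioned on $\{1\notin A_k\}$. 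The signed measure $\mathcal{G}_N^k(\mu_0)-\mathcal{G}_{\infty}^k(\mu_0) = \epsilon_k\bigl(\nu_k-\mathcal{G}_{\infty}^k(\mu_0)\bigr)$ is therefore controlled in total variation by $2\epsilon_k$, and more usefully $|\mathcal{G}_N^k(\mu_0)-\mathcal{G}_{\infty}^k(\mu_0)|(E)\le \epsilon_k\,\hat\mu(E)$ with $\hat\mu:=\nu_k+\mathcal{G}_{\infty}^k(\mu_0)$.

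Next I would feed this into Lemma \ref{lmm:W2byTV} with $\delta=\epsilon_k$ and this $\hat\mu$. This reduces the claim to a uniform-in-$N$ bound on the $\sq$-th moment $M_\sq=\inf_{x_0}\int |x-x_0|^{\sq}\,\hat\mu(dx)$, i.e. on the $\sq$-th moments of $\mathcal{G}_{\infty}^k(\mu_0)$ and of $\nu_k$. Since $\nu_k$ is a conditional law of $X^1(t_k)$ under the RBM dynamics and $\mathcal{G}_\infty^k(\mu_0)$ is the one-marginal of the $p$-particle system \eqref{eq:RBMmeanfieldSDE}, both are governed by SDEs of the form $dZ = b(Z)\,dt + (\text{Lipschitz interaction average})\,dt + \sqrt2\sigma\,dW$. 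Using Assumption \ref{ass:momentmu0}, the one-sided Lipschitz bound on $b$ (Assumption \ref{ass:kernelfunctions}), the Lipschitz bound on $K$, Itô's formula applied to $|Z-x_0|^{\sq}$ (or to $(1+|Z|^2)^{\sq/2}$), exchangeability to replace cross terms, and Gronwall, one gets $\E|X^1(t_k)|^{\sq}\le C e^{c t_k}$ uniformly in $N$; the conditional moment under $\{1\notin A_k\}$ is bounded by $\epsilon_k^{-1}\E|X^1(t_k)|^{\sq}$, but this loses the factor we need — so instead one bounds $\E[\,|X^1(t_k)|^{\sq}\mathbf 1_{1\notin A_k}\,]\le C e^{c t_k}$ directly (the moment propagation does not see the conditioning), which is exactly what is needed: $M_\sq\,\delta$ in Lemma \ref{lmm:W2byTV} should be read as $\int|x-x_0|^{\sq}|\mu-\nu|(dx)=\epsilon_k\int|x-x_0|^{\sq}\hat\mu(dx)$, and the right-hand side is $\E[|X^1(t_k)-x_0|^{\sq}\mathbf 1_{1\notin A_k}] + \epsilon_k\int|x-x_0|^{\sq}\mathcal{G}_\infty^k(\mu_0)(dx)\le C e^{\sq\alpha t_k}\epsilon_k$. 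Taking the $1/\sq$ power gives $W_\sq\le 2^{1-1/\sq}(Ce^{\sq\alpha t_k}\epsilon_k)^{1/\sq}= C' e^{\alpha t_k}\epsilon_k^{1/\sq}$.

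Finally, the exponential constant: in the strong-confinement case (Assumption \ref{ass:kernelstrong}) with $r>2L$, the drift contributes $-r|Z-x_0|^2$ plus a $+L|Z-x_0|\cdot(\text{average }|Z_j-x_0|)$ term, and after symmetrizing over the batch the net coefficient in the Gronwall inequality for $\E[(1+|Z|^2)^{\sq/2}]$ is negative (up to the lower-order constant coming from $b(x_0)$ and $\sigma$), so the moments stay bounded with $\alpha=0$; otherwise Assumption \ref{ass:kernelfunctions} gives a positive constant $\alpha$ depending on $\beta$ and $L$. The main obstacle I expect is not the Wasserstein/total-variation step — that is essentially Lemma \ref{lmm:W2byTV} — but getting the moment estimate with the right structure: one must carry the indicator $\mathbf 1_{1\notin A_k}$ (which is $\mathcal{F}_{k}$-measurable, hence frozen over $[t_{k-1},t_k)$ on each batch) through the Itô/Gronwall argument so that the $\epsilon_k$ is retained rather than cancelled, and one must handle the polynomial growth of $b$ (so $|b(Z)|$ itself is not integrable a priori) via the one-sided Lipschitz condition rather than a direct bound on $|b|$.
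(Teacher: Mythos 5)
Your proposal follows the paper's proof almost exactly: the decomposition of $\mathcal{G}_N^k(\mu_0)$ into a $(1-\epsilon_k)$-weighted $\mathcal{G}_\infty^k(\mu_0)$ plus an $\epsilon_k$-weighted remainder $\nu_k$ (using Lemma~\ref{lmm:cleandistribution}), the domination $|\mathcal{G}_N^k(\mu_0)-\mathcal{G}_\infty^k(\mu_0)|\le\epsilon_k(\nu_k+\mathcal{G}_\infty^k(\mu_0))$, the application of Lemma~\ref{lmm:W2byTV} with $\delta=\epsilon_k$, and the reduction to moment bounds is precisely the structure of the paper's argument.

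One wrinkle worth flagging: in the middle of your proof you write that one should ``bound $\E[|X^1(t_k)|^{\sq}\mathbf 1_{1\notin A_k}]\le C e^{c t_k}$ directly,'' but that bound is too weak (it is trivially implied by the unconditional moment bound); what you actually need, and what you do write one sentence later, is $\E[|X^1(t_k)-x_0|^{\sq}\mathbf 1_{1\notin A_k}]\le Ce^{\sq\alpha t_k}\epsilon_k$. The clean way to get this is the paper's Lemma~\ref{lmm:momentsofrbm}: the moment bound holds \emph{conditionally on each fixed batch sequence}, uniformly over that sequence, and $\mathbf 1_{1\notin A_k}$ is a function of the batch sequence alone (it is $\sigma(\mathcal{C}^{(\ell)}:\ell\le k-1)$-measurable, independent of the Brownian motions and initial data). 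Conditioning on the batch sequence then pulls out a factor $\bbP(1\notin A_k)=\epsilon_k$ while the conditional moment stays bounded. Your phrase ``the moment propagation does not see the conditioning'' is gesturing at this, but the precise mechanism is this conditional uniformity — and it requires an argument (the paper's Grönwall bound for $a(t)=\max_i\E(|X^i|^q\mid\{\mathcal{C}^{(\ell)}\})$) because once you condition on batches the particles are no longer exchangeable, so the exchangeability trick you invoke for the $p$-particle system~\eqref{eq:RBMmeanfieldSDE} does not directly apply to the conditioned $N$-particle system.
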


To prove Theorem \ref{thm:meanfield}, we need some preparation.  We first establish some moments estimates.
\begin{lemma}\label{lmm:momentsoflimitingdynamics}
If Assumption \ref{ass:momentmu0} holds, then for $q\ge 1$, there exists $\alpha_1(q)\ge 0$ such that
\begin{gather}
\sup_{k: k\tau\le T}\int_{\R^d} |x|^q\mathcal{G}_{\infty}^k(\mu_0)(dx)\le C(q)e^{\alpha_1(q) T}.
\end{gather}
In the strong confinement case (Assumption \ref{ass:kernelstrong}), one can take $\alpha_1(q)=0$, i.e., the constant in the upper bound can be uniform in $T$.
\end{lemma}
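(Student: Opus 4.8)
The plan is to realize $\mathcal{G}_{\infty}^k(\mu_0)$ stochastically via the $p$-particle system and then propagate a Gr\"onwall estimate through the discrete iterations. By Algorithm \ref{meanfield}, on each subinterval $[t_k,t_{k+1})$ one has $\tilde{\mu}(\cdot,t)=\mathscr{L}(Y^1(t))$, where $Y^1,\dots,Y^p$ solve the system \eqref{eq:RBMmeanfieldSDE} with $\{Y^i(t_k)\}_{i=1}^p$ i.i.d.\ drawn from $\mathcal{G}_{\infty}^k(\mu_0)$; the law of $(Y^1,\dots,Y^p)$ is exchangeable on $[t_k,t_{k+1})$, and re-tensorizing at $t_{k+1}$ leaves the one-particle marginal unchanged, so $\mathcal{G}_{\infty}^{k+1}(\mu_0)=\mathscr{L}(Y^1(t_{k+1}^-))$. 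Since $b$ is smooth and one-sided Lipschitz (hence locally Lipschitz, with at most superlinear growth) and $K$ is globally Lipschitz, \eqref{eq:RBMmeanfieldSDE} admits a unique global strong solution that does not explode. Finally, by the Lyapunov (Jensen) inequality it suffices to prove the estimate for $q\ge 2$, since for $1\le q<2$ one has $\int_{\R^d}|x|^q\,\mathcal{G}_{\infty}^k(\mu_0)(dx)\le\bigl(\int_{\R^d}|x|^2\,\mathcal{G}_{\infty}^k(\mu_0)(dx)\bigr)^{q/2}$.

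Fix $q\ge 2$ and let $m_q(t):=\E|Y^1(t)|^q$. I would apply It\^o's formula to $t\mapsto|Y^1(t)|^q$ (made rigorous by first localizing with the exit times $\inf\{t:|Y^1(t)|\ge R\}$ and then letting $R\to\infty$ using the a priori bound obtained below), employing $\Delta(|y|^q)=q(d+q-2)|y|^{q-2}$. Taking expectations removes the stochastic integral and yields
\begin{gather}
m_q'(t)=q\,\E\bigl[|Y^1|^{q-2}Y^1\cdot b(Y^1)\bigr]+\frac{q}{p-1}\sum_{j\neq 1}\E\bigl[|Y^1|^{q-2}Y^1\cdot K(Y^1-Y^j)\bigr]+\sigma^2 q(d+q-2)\,m_{q-2}(t).
\end{gather}
Under Assumption \ref{ass:kernelfunctions}, the one-sided Lipschitz bound gives $y\cdot b(y)\le\beta|y|^2+|b(0)|\,|y|$, so the first term is at most $q\beta m_q+q|b(0)|\,m_{q-1}$. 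Writing $|K(y-z)|\le|K(0)|+L|y|+L|z|$, using H\"older's inequality and the exchangeability fact $\E[|Y^1|^{q-1}|Y^j|]\le m_q$, each expectation in the sum is at most $|K(0)|\,m_{q-1}+2L\,m_q$, and since the $p-1$ of them are equal the prefactor $1/(p-1)$ absorbs the number of terms, giving the bound $q|K(0)|\,m_{q-1}+2qL\,m_q$, independent of $p$. As $m_{q-1},m_{q-2}\le 1+m_q$ by Young's inequality, I obtain a closed inequality $m_q'(t)\le a_q\,m_q(t)+c_q$ with $a_q,c_q\ge 0$ depending only on $q,d,\beta,L,\sigma,|b(0)|,|K(0)|$---in particular on neither $N$, $p$, nor $\tau$.

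Gr\"onwall's inequality over $[t_k,t_{k+1})$ then gives $m_q(t_{k+1}^-)\le e^{a_q\tau}m_q(t_k)+c_q\tau e^{a_q\tau}$. Setting $u_k:=\int_{\R^d}|x|^q\,\mathcal{G}_{\infty}^k(\mu_0)(dx)=m_q(t_k)$, with $u_0<\infty$ by Assumption \ref{ass:momentmu0}, iterating and using $e^{a_q\tau}-1\ge a_q\tau$ yields $u_k\le e^{a_q k\tau}u_0+\tfrac{c_q}{a_q}e^{a_q\tau}\bigl(e^{a_q k\tau}-1\bigr)$, so with $k\tau\le T$ (and $\tau\le T$ without loss of generality) we get $u_k\le C(q)e^{\alpha_1(q)T}$, e.g.\ with $\alpha_1(q)=2a_q$. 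In the strongly confining case (Assumption \ref{ass:kernelstrong}) one replaces $\beta$ by $-r$; the coefficient of $m_q$ becomes $-q(r-2L)$, which is strictly negative since $r>2L$, and absorbing the lower-order terms $m_{q-1},m_{q-2}$ into $\tfrac12 q(r-2L)m_q$ via Young's inequality gives $m_q'(t)\le-\lambda_q m_q(t)+c_q'$ with $\lambda_q=\tfrac12 q(r-2L)>0$. Then $m_q(t_{k+1}^-)$ is a convex combination of $m_q(t_k)$ and $c_q'/\lambda_q$, so $u_{k+1}\le\max(u_k,c_q'/\lambda_q)$, hence inductively $u_k\le\max(u_0,c_q'/\lambda_q)$ uniformly in $k$ and $T$, i.e.\ $\alpha_1(q)=0$.

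The Gr\"onwall bookkeeping and the iteration are routine; the main obstacle I anticipate is twofold. First, one must make the It\^o computation genuinely rigorous despite $b$ being only one-sided Lipschitz with possibly superlinear growth: this needs the non-explosion of \eqref{eq:RBMmeanfieldSDE} together with a clean localization/monotone-convergence passage, with a priori control of $m_q$ along the way. Second, in the strongly confining regime one must track the constants carefully so that the favorable sign produced by $r>2L$ survives the Young-inequality absorptions; the cancellation of the batch-size dependence through exchangeability is the point that makes the final bound free of $p$ and $N$.
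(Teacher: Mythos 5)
Your proof is correct and follows essentially the same route as the paper's: realize $\mathcal{G}_{\infty}^k(\mu_0)$ via the $p$-particle SDE \eqref{eq:RBMmeanfieldSDE}, apply It\^o's formula to $|Y^i|^q$, use the one-sided Lipschitz bound on $b$ and the Lipschitz bound on $K$, invoke exchangeability to close the mixed-moment term, and iterate a per-step Gr\"onwall estimate across the $[t_k,t_{k+1})$ intervals, with the sign $r>2L$ producing the $T$-uniform bound in the confining case. The only cosmetic differences are that the paper handles the mixed moment $\E|Y^1|^{q-1}|Y^j|$ with Young's inequality while you use H\"older plus exchangeability directly, and the paper reduces $q\in[1,2)$ to $q\ge 2$ by H\"older where you use Jensen; both are equivalent, and your added remark about localization/non-explosion is a fair tightening of a step the paper leaves implicit.
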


\begin{proof}

We note that $\{\mathcal{G}_{\infty}^k\}$ is a semigroup, so it suffices to estimate the growth of the moments in one step.

First, consider \eqref{eq:RBMmeanfieldSDE} and take $q\ge 2$.  By It\^o's calculus, one has
\begin{multline}
d\E|Y^i|^q=q\E|Y^i|^{q-2}Y^i\cdot\left[b(Y^i)+\frac{1}{p-1}\sum_{j=1,j\neq i}^{p}K(Y^i-Y^j)\right]\,dt\\
+\E q|Y^i|^{q-2}(d+q-2)\sigma^2\,dt.
\end{multline}

Using the one-sided Lipschitz condition in Assumption \ref{ass:kernelfunctions}, one has
\begin{gather*}
Y^i\cdot b(Y^i)=(Y^i-0)\cdot(b(Y^i)-b(0))+Y^i\cdot b(0)
\le \beta |Y^i|^2+C|Y^i|.
\end{gather*}
Similarly, since $K$ is Lipschitz, one has $|K(Y^i-Y^j)|\le |K(0)|+L(|Y^i|+|Y^j|)$, and thus
\begin{gather*}
Y^i\cdot K(Y^i-Y^j)
\le |K(0)||Y^i|+L(|Y^i|^2+|Y^i||Y^j|).
\end{gather*}
It follows that
\begin{gather*}
\begin{split}
&\E|Y^i|^{q-2}Y^i\cdot\left[b(Y^i)+\frac{1}{p-1}\sum_{j=1,j\neq i}^{p}K(Y^i-Y^j)\right]\\
& \le (\beta+L)\E|Y^i|^q+C\E |Y^i|^{q-1}
+\frac{1}{p-1}\sum_{j: j\neq i}\E|Y^i|^{q-1}|Y^j|.
\end{split}
\end{gather*}

By Young's inequality,
\[
 \E|Y^i|^{q-1}|Y^j| \le \frac{(q-1)\nu}{q}\E|Y^i|^q+\frac{\E |Y^j|^q}{q\nu^{q-1}}
\]
for any $\nu>0$. In particular, one also has
\[
\E|Y^i|^{q-1}\le \frac{(q-1)\nu}{q}\E|Y^i|^q+\frac{1}{q\nu^{q-1}}.
\]
Similarly, using Young's inequality, $\E|Y^i|^{q-2}$ is also easily controlled by
$\delta \E|Y^i|^q+C(\delta)$ for some small $\delta$.

By the exchangeability so that $\E|Y^i|^q=\E|Y^j|^q$, one then has
\begin{gather*}
\frac{d}{dt}\E|Y^i|^q\le q(\beta+2L+\delta)\E|Y^i|^q+C_2.
\end{gather*}

In the strong confinement case as in Assumption \ref{ass:kernelstrong},
\begin{gather*}
\begin{split}
&\E|Y^i|^{q-2}X^i\cdot\left[b(Y^i)+\frac{1}{p-1}\sum_{j=1,j\neq i}^{p}K(Y^i-Y^j)\right]\\
& \le (-r+L)\E|Y^i|^q+C\E|Y^i|^{q-1}
+\frac{L}{p-1}\sum_{j: j\neq i}\E|Y^i|^{q-1}|Y^j|\\
&\le (-r+L+\frac{(q-1)L}{q})\E|Y^i|^q+\frac{L}{p-1}\sum_{j: j\neq i}\frac{1}{q}\E|Y^j|^q
+\delta\E|Y^i|^{q}+C(\delta)\\
&=(-r+2L)\E|Y^i|^q+\delta\E|Y^i|^{q}+C(\delta),
\end{split}
\end{gather*}
where $\delta$ is a sufficiently small but fixed number. The conclusions then follow easily for $q\ge 2$.

If $q\in [1,2)$, one then uses the H\"older inequality $(\E|Y^i|^q)^{1/q}
\le (\E|Y^i|^{r})^{1/r}$ for $r\ge q$ to get the desired result.
\end{proof}

We also need the moment control for the Random Batch Method 
conditioning on any specific sequence of random batches.
\begin{lemma}\label{lmm:momentsofrbm}
Consider a fixed sequence of divisions of random batches $\{\mathcal{C}^{(\ell)}\}$. 
Again, consider the solutions $\{X^i(t)\}_{i=1}^N$ to \eqref{eq:firstalgorithm}. 
Then,
\begin{gather}
\sup_{t\le T}\sup_{i}\E\left(|X^i|^q |  \{\mathcal{C}^{(\ell)}\}\right)\le C(q)e^{\alpha_1 T}.
\end{gather}
In the strong confinement Assumption \ref{ass:kernelfunctions},
\begin{gather}
\sup_{t\ge 0}\sup_{i}\E\left(|X^i|^q | \{\mathcal{C}^{(\ell)}\}\right)\le C(q),
\end{gather}
where $C(q)$ and $\alpha_1$ do not depend on the specific sequence of divisions of random batches $\{\mathcal{C}^{(\ell)}\}$.
\end{lemma}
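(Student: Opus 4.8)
The plan is to run essentially the same Itô-calculus estimate as in the proof of Lemma \ref{lmm:momentsoflimitingdynamics}, but now conditioned on a fixed sequence of batch divisions $\{\mathcal{C}^{(\ell)}\}$, and to observe that the structural features that made the one-step bound work — the one-sided Lipschitz (resp.\ strong confinement) property of $b$, the Lipschitz property of $K$, and exchangeability \emph{within a fixed batch} — all survive the conditioning. First I would fix $\ell$ and work on a single subinterval $[t_\ell, t_{\ell+1})$. On this interval the dynamics \eqref{eq:firstalgorithm} decouples across batches, so it suffices to treat one batch $\mathcal{C}_q^{(\ell)}$ of size $p$ in isolation; inside it the particles satisfy exactly the SDE system \eqref{eq:RBMmeanfieldSDE}. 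Applying Itô's formula to $|X^i|^q$ for $q\ge 2$ and taking conditional expectation $\E(\cdot\mid\{\mathcal{C}^{(\ell)}\})$, the same bounds $Y^i\cdot b(Y^i)\le\beta|Y^i|^2+C|Y^i|$ and $Y^i\cdot K(Y^i-Y^j)\le|K(0)||Y^i|+L(|Y^i|^2+|Y^i||Y^j|)$ hold pointwise, hence in conditional expectation. Young's inequality then reduces everything to conditional $q$-th moments, and the only place the earlier argument invoked exchangeability is to write $\E|Y^i|^q=\E|Y^j|^q$: here I instead use that within the fixed batch $\mathcal{C}_q^{(\ell)}$ the summation $\frac{1}{p-1}\sum_{j\neq i}\E(|X^i|^{q-1}|X^j|\mid\cdots)$ can be bounded via Young's inequality by $\frac{(q-1)\nu}{q}\E(|X^i|^q\mid\cdots)+\frac{1}{q\nu^{q-1}}\max_{j\in\mathcal{C}_q^{(\ell)}}\E(|X^j|^q\mid\cdots)$, so setting $m_\ell(t):=\sup_i\E(|X^i(t)|^q\mid\{\mathcal{C}^{(\ell)}\})$ one obtains the differential inequality $\frac{d}{dt}m_\ell(t)\le q(\beta+2L+\delta)m_\ell(t)+C_2$ (resp.\ $\frac{d}{dt}m_\ell(t)\le (q(-r+2L)+\delta)m_\ell(t)+C(\delta)$ under Assumption \ref{ass:kernelstrong}), uniformly over the choice of batches.

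Next I would chain the subintervals together by Grönwall. On each $[t_\ell,t_{\ell+1})$ the inequality gives $m_\ell(t)\le e^{\alpha_1(t-t_\ell)}m_\ell(t_\ell)+C_2'$ with $\alpha_1=q(\beta+2L+\delta)$ (and $C_2',\alpha_1$ independent of the batch sequence), and since the marginal at $t_{\ell+1}^-$ is the initial condition at $t_{\ell+1}^+$, the values match up and a finite induction over $\ell$ with $t_\ell\le T$ yields $\sup_{t\le T}\sup_i\E(|X^i|^q\mid\{\mathcal{C}^{(\ell)}\})\le C(q)e^{\alpha_1 T}$, starting from $\sup_i\E|X_0^i|^q=\int|x|^q\mu_0(dx)<\infty$ by Assumption \ref{ass:momentmu0}. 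In the strong confinement case $r>2L$ makes the linear coefficient $q(-r+2L)+\delta<0$ for $\delta$ small, so each step is a contraction towards a fixed level and the bound is uniform in $T$ (and in $\{\mathcal{C}^{(\ell)}\}$), giving the second displayed inequality. Finally, for $q\in[1,2)$ I would invoke Hölder's inequality $(\E(|X^i|^q\mid\cdots))^{1/q}\le(\E(|X^i|^2\mid\cdots))^{1/2}$, reducing to the already-established $q=2$ case.

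The only subtlety — it is mild — is bookkeeping: making sure that the constants $C(q)$ and $\alpha_1$ produced at each step genuinely do not depend on \emph{which} batches were chosen. This is automatic because the one-sided Lipschitz constant $\beta$ (or $-r$), the Lipschitz constant $L$ of $K$, the batch size $p$, the dimension $d$, and the noise level $\sigma$ are all fixed parameters of the problem, and the recursion only ever sees $\max_{j\in\mathcal{C}_q^{(\ell)}}$ of the previous-step moments, which is in turn dominated by the global supremum $m_{\ell-1}$; no constant ever acquires a dependence on the identity of the batch. Hence I do not expect any real obstacle here — the proof is a conditioned rerun of Lemma \ref{lmm:momentsoflimitingdynamics} together with a Grönwall chaining over the $[T/\tau]$ subintervals.
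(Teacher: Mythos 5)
Your proposal is correct and follows essentially the same route as the paper: a conditional It\^o estimate on each batch subinterval, replacing the exchangeability step of Lemma \ref{lmm:momentsoflimitingdynamics} by taking a maximum over particles, then a Gr\"onwall chaining, with $r>2L$ delivering a per-step contraction in the strongly confining case. The only cosmetic difference is that the paper works with the integral form of the inequality and a comparison function $b(t)$ rather than stating a differential inequality for $\sup_i\E(|X^i|^q\mid\cdot)$ directly, which is a minor but slightly more careful handling of differentiating a maximum.
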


\begin{proof}
The proof follows the same line as that in Lemma \ref{lmm:momentsoflimitingdynamics}.
The difference is that there is no exchangeability now conditioning on the random batches.

Under Assumption \ref{ass:kernelfunctions} and using the similar estimates as in 
Lemma \ref{lmm:momentsoflimitingdynamics}, one has for $t\in [t_k, t_{k+1}]$ that
\begin{gather}\label{eq:conditionalaux1}
\begin{split}
\frac{d}{dt}\E(|X^i|^q | \{\mathcal{C}^{(\ell)}\})
\le q\left(\beta+L+\frac{(q-1)L}{q}+\delta\right)\E(|X^i|^q | \{\mathcal{C}^{(\ell)}\})\\
+\frac{qL}{p-1}\sum_{j: j\neq i}\frac{1}{q}\E(|X^j|^q | \{\mathcal{C}^{(\ell)}\})
+C(\delta).
\end{split}
\end{gather}

Under Assumption \ref{ass:kernelstrong}, one then has for $t\in [t_k, t_{k+1}]$ that
\begin{gather}\label{eq:conditionalaux2}
\begin{split}
\frac{d}{dt}\E(|X^i|^q | \{\mathcal{C}^{(\ell)}\})
\le q\left(-r+L+\frac{(q-1)L}{q}+\delta \right)\E(|X^i|^q | \{\mathcal{C}^{(\ell)}\})\\
+\frac{qL}{p-1}\sum_{j: j\neq i}\frac{1}{q}\E(|X^j|^q | \{\mathcal{C}^{(\ell)}\})
+C(\delta).
\end{split}
\end{gather}

Next, based on \eqref{eq:conditionalaux1}, one easily finds
\begin{gather*}
\begin{split}
\E(|X^i(t)|^q | \{\mathcal{C}^{(\ell)}\})-\E(|X^i|^q(t_k) | \{\mathcal{C}^{(\ell)}\}) 
\le &q\left(\beta+L+\frac{(q-1)L}{q}+\delta\right)\int_{t_k}^t \E(|X^i(s)|^q | \{\mathcal{C}^{(\ell)}\})\,ds\\
&+L\int_{t_k}^t \max_{1\le i\le p}\E(|X^i(s)|^q | \{\mathcal{C}^{(\ell)}\})\,ds+\int_{t_k}^tC(\delta)\,ds.
\end{split}
\end{gather*}
It follows that
\begin{gather}\label{eq:a}
a(t):=\max_{1\le i\le p}\E(|X^i|^q | \{\mathcal{C}^{(\ell)}\})
\end{gather}
satisfies
\[
a(t)\le a(t_k)+q(\beta+2L+\delta)\int_{t_k}^t [a(s)+C(\delta)]\,ds.
\]
Gr\"onwall's inequality then yields the first claim with any
$\alpha>\beta+2L$.

For \eqref{eq:conditionalaux2}, defining $r_1:=q\left(r-L-\frac{q-1}{q}L-\delta \right)>0$, one finds that
\begin{multline*}
\E(|X^i(t)|^q | \{\mathcal{C}^{(\ell)}\})
\le \E(|X^i|^q(t_k) | \{\mathcal{C}^{(\ell)}\}) e^{-r_1(t-t_k)} \\
+\int_{t_k}^t e^{-r_1(t-s)}\left[L\max_{1\le i\le p}\E(|X^i(s)|^q | \{\mathcal{C}^{(\ell)}\})+C(\delta)\right]\,ds.
\end{multline*}
Hence, the function $a$ defined in \eqref{eq:a} satisfies
\[
a(t)\le a(t_k)e^{-r_1(t-t_k)}
+\int_{t_k}^t e^{-r_1(t-s)}\left[La(s)+C(\delta)\right]\,ds.
\]
It can be shown easily that $a(t)$ is controlled by $b(t)$ which satisfies the following
integral equality
\[
b(t)=  a(t_k)e^{-r_1(t-t_k)}
+\int_{t_k}^t e^{-r_1(t-s)}\left[Lb(s)+C(\delta)\right]\,ds.
\]
(One can perturb the initial data $a(t_k)\to a(t_k)+\epsilon$ for $b(\cdot)$ and then take $\epsilon\to 0$).

Then, one finds
\[
b'(t)=(-r_1+L)b(t)+C(\delta)=q(-r+2L+\delta)b(t)+C(\delta),~~~b(t_k)=a(t_k).
\]
Hence,
\[
a(t_{k+1})\le b(t_{k+1})\le a(t_k)e^{-q(r-2L+\delta)\tau}+\frac{C(\delta)}{q(r-2L-\delta)}
(1-e^{-q(r-2L+\delta)\tau}).
\]
The second claim also follows.
\end{proof}

Now, we can prove the main theorem in this section.
\begin{proof}[Proof of Theorem \ref{thm:meanfield}]

First of all, for the $N$-particle system, by symmetry, the distribution of any particle is equal to $\mathcal{G}_{N}^k(\mu_0)$. Now, we focus on a particular particle $i=1$, for example.

By Lemma \ref{lmm:cleandistribution},
\begin{gather}\label{eq:GNdecompose}
\mathcal{G}_N^k(\mu_0)=\bbP(1\in A_k)\mathcal{G}_{\infty}^k(\mu_0)
+\bbP(1\notin  A_k)\nu_k,
\end{gather}
for some probability measure $\nu_k$. 
To see this, we consider all possible sequences of random batches. Only the first $k$ divisions of batches (i.e. ones at $t_0,\cdots, t_{k-1}$) will affect the distribution at $t_k$. This subsequence (the first $k$ divisions) can take only finitely many values, and let $\{c^{\ell}\}_{\ell\le k-1}$ be such values. Then, for any  $E\subset\R^d$ that is Borel measurable,
\[
\mathcal{G}_N^k(\mu_0)[E]=\sum_{\{\mathcal{C}^{\ell}=c^{\ell},\ell\le k-1\}} \mathbb{P}(\mathcal{C}^{\ell}=c^{\ell},\ell\le k-1)
\mathbb{P}(X^1 \in E | \mathcal{C}^{\ell}=c^{\ell},\ell\le k-1).
\]
Lemma \ref{lmm:cleandistribution} tells us that if $\{c^{\ell}\}_{\ell\le k-1}$ is a value such that $1$ is clean, then 
\[
\mathbb{P}(X^1\in E | \mathcal{C}^{\ell}=c^{\ell},\ell\le k-1)=\mathcal{G}_{\infty}^k(\mu_0)[E].
\]
Hence,
\begin{multline*}
\mathcal{G}_N^k(\mu_0)[E]=\bbP(1\in A_k)\mathcal{G}_{\infty}^k(\mu_0)[E]\\
+\sum_{\{\mathcal{C}^{\ell}=c^{\ell},\ell\le k-1\},1\notin A_k}\mathbb{P}(\mathcal{C}^{\ell}=c^{\ell},\ell\le k-1)
\mathbb{P}(X^1\in E | \mathcal{C}^{\ell}=c^{\ell},\ell\le k-1)\\
=\bbP(1\in A_k)\mathcal{G}_{\infty}^k(\mu_0)[E]+\bbP(1\notin A_k)\nu_k(E),
\end{multline*}
with
\begin{equation*}
\nu_k(E)=\sum_{\{\mathcal{C}^{\ell}=c^{\ell},\ell\le k-1\},1\notin A_k}\frac{\mathbb{P}(\mathcal{C}^{\ell}=c^{\ell},\ell\le k-1)}{\bbP(1\notin  A_k)}
\mathbb{P}(X^1\in E | \mathcal{C}^{\ell}=c^{\ell},\ell\le k-1).
\end{equation*}
Clearly, $\nu_k$ is a convex combination of some conditional marginal distributions of $X^1$, each being $\mathscr{L}(X^1)$ conditioning on a particular sequence of batches for $\{1\notin A_k\}$. Hence, $\nu_k$ is a probability measure.

By \eqref{eq:GNdecompose}, it holds that
\begin{gather}
|\mathcal{G}_{\infty}^k(\mu_0)-\mathcal{G}_N^k(\mu_0)|
\le (1-\bbP(1\in A_k)) \mathcal{G}_{\infty}^k(\mu_0)+\bbP(1\notin A_k)\nu_k=\epsilon_k(\mathcal{G}_{\infty}^k(\mu_0)+\nu_k).
\end{gather}
Therefore, the total variation distance between the two measures is controlled by
\begin{gather}
\|\mathcal{G}_{\infty}^k(\mu_0)-\mathcal{G}_N^k(\mu_0)\|_{TV}
\le (1-\bbP(1\in A_k))+\bbP(1\notin A_k)= 2\epsilon_k.
\end{gather}

By Lemma \ref{lmm:momentsofrbm}, for each sequence of batches, one has
\[
\sup_i \E (|X^i|^q| \mathcal{C}^{(\ell)})\le C(q)e^{\alpha_1 t}.
\]
Hence, it holds that
\begin{gather}\label{eq:pfmoment1}
\int_{\R^d} |x|^q \nu_k(dx) \le C(q)e^{\alpha_1 t}.
\end{gather}
In the case of strong confinement, $\alpha_1=0$.
Similarly, by Lemma \ref{lmm:momentsoflimitingdynamics}, $\mathcal{G}_{\infty}^k(\mu_0)$ has the same moment control. Application of Lemma \ref{lmm:W2byTV} then yields the desired result.
\end{proof}

Lastly, we close up the estimate.
\begin{theorem}
For any fixed $k$, it holds that
\begin{gather}
\lim_{N\to\infty}\epsilon_k=0.
\end{gather}
\end{theorem}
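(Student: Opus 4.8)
The plan is to show that $\epsilon_k = \bbP(1 \notin A_k) \to 0$ as $N \to \infty$ for each fixed $k$, by estimating the probability that the list $L_1^{(k)}$ has size strictly less than $p^k$, equivalently (by Lemma \ref{lmm:cleandistribution}(i)) that particle $1$ fails to be clean at $t_k^-$. I would proceed by induction on $k$. The base cases $k=0,1$ are trivial since $A_0 = A_1 = \{1,\dots,N\}$ so $\epsilon_0 = \epsilon_1 = 0$. For the inductive step, I would condition on $\mathcal{F}_{k-1}$, i.e.\ on the full history of batch divisions $\{\mathcal{C}^{(\ell)}\}_{\ell \le k-2}$ up to time $t_{k-1}$, which determines all the lists $\{L_i^{(k-1)}\}_i$ and the set $A_{k-1}$.

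The key decomposition is: particle $1$ fails to be clean on $[t_{k-1},t_k)$ (as measured at $t_k^-$) only if either (a) some batchmate $j$ of $1$ in $\mathcal{C}^{(k-1)}$ fails to be clean at $t_{k-1}^-$, i.e.\ $j \notin A_{k-1}$, or (b) all batchmates (including $1$) are in $A_{k-1}$ but two of the lists $L_j^{(k-1)}, L_\ell^{(k-1)}$ for $j,\ell$ in the batch of $1$ intersect. For (a), a union bound over the $p-1$ batchmates together with exchangeability (conditioned on the batch structure the probability a given other particle lands in $1$'s batch and is unclean is controlled) gives a contribution of order $(p-1)\epsilon_{k-1}$, or more carefully $\bbP(\exists j \in \mathcal{C}^{(k-1)}_{q(1)}, j \neq 1, j \notin A_{k-1})$, which one bounds by noting each fixed particle joins $1$'s batch with probability $(p-1)/(N-1)$ and is unclean with probability $\epsilon_{k-1}$; summing gives $\le (N-1)\cdot \frac{p-1}{N-1}\epsilon_{k-1} = (p-1)\epsilon_{k-1}$. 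For (b): given the history, each list $L_i^{(k-1)}$ has cardinality at most $p^{k-1}$, so the union of lists over $1$'s $p-1$ batchmates (other than $1$ itself) has at most $(p-1)p^{k-1}$ elements; the conditional probability that a uniformly random batchmate set of size $p-1$ drawn from the remaining $N-1$ particles hits this fixed small set is $O(p^{k}/N)$ by a straightforward counting argument (the random batch assignment is exchangeable, so this is like sampling without replacement). More precisely, summing over the $\binom{p-1}{2}$ pairs of batchmates and using that $|L_j^{(k-1)}| \le p^{k-1}$, each pair contributes $O(p^{2(k-1)}/N)$.

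Combining, one gets a recursion of the shape $\epsilon_k \le (p-1)\epsilon_{k-1} + C(p,k)/N$, and iterating from $\epsilon_1 = 0$ yields $\epsilon_k \le \tilde{C}(p,k)/N \to 0$ as $N \to \infty$ for each fixed $k$. (One can even extract the quantitative rate $\epsilon_k = O_k(N^{-1})$, consistent with the remark in the introduction about the $N^{-1}$ convergence under $W_1$.) The main obstacle I anticipate is making the conditional combinatorial estimate in part (b) fully rigorous: one must carefully set up the probability space for the random grouping at step $k-1$ and argue that, conditioned on $\mathcal{F}_{k-1}$, the event that $1$'s new batch contains two particles with intersecting lists is genuinely $O(p^{2k}/N)$ — this requires handling the subtlety that the lists $L_i^{(k-1)}$ are themselves random (but $\mathcal{F}_{k-1}$-measurable), and that the union bound over pairs interacts correctly with the uniform-random-grouping measure. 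A clean way to handle this is to note that, conditioned on $\mathcal{F}_{k-1}$, the batch containing $1$ at $t_{k-1}$ is a uniformly random $(p-1)$-subset of $\{2,\dots,N\}$ (after accounting for the grouping constraint), so the bad event probabilities reduce to elementary hypergeometric tail bounds.
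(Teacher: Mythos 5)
Your proposal is correct and reaches the same quantitative conclusion $\epsilon_k = O_k(N^{-1})$, but the route is genuinely different from the paper's. You use an \emph{additive} union-bound recursion of the form $\epsilon_k \le p\,\epsilon_{k-1} + O(p^{2k}/N)$ (note: the coefficient should be $p$ rather than $p-1$, since you must also require $1 \in A_{k-1}$; this is inconsequential for the iteration). The paper instead proves a \emph{multiplicative} lower bound, $1 - \epsilon_{k+1} \ge \bbP(B_k)\bigl(1 - \epsilon_k/\bbP(B_k)\bigr)^p$, obtained by conditioning on the event $B_k$ (pairwise disjointness of the batchmates' lists) and invoking a conditional-independence structure among the cleanness events; it then bounds $\bbP(B_k^c) = O(N^{-1})$ via a sequential ``reverse construction'' of the lists and a product of hypergeometric ratios $\binom{N-1-\ell p^k}{p-1}/\binom{N-1}{p-1}$. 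Your estimate of the list-intersection probability is by contrast a direct pair-counting argument: union bound over the $\binom{p}{2}$ pairs of batchmates and use that a fixed particle's list can collide with at most $O(p^{2(k-1)})$ others. Your approach is arguably more elementary in that it avoids the conditional-independence step, which is delicate because the batch partitions are not independent across disjoint index sets; the paper buys from its route a somewhat cleaner form of the recursion, but both arrive at $\epsilon_k \le \tilde C(p,k)/N$. The subtlety you flag in part (b) is real, and the resolution you sketch — conditioning on $\mathcal{F}_{k-1}$ so the lists are fixed and the new batch of $1$ is a uniformly random $(p-1)$-subset, then a hypergeometric bound — is the right way to make it precise and is in the same spirit as the paper's combinatorics.
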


\begin{proof}

First of all, clearly, we have
\[
\epsilon_0=\epsilon_1=1-1=0.
\]

Now, we do induction on $k$. Assume
\[
\lim_{N\to\infty}\epsilon_k=0.
\]
Consider the batches for $t_k \to t_{k+1}^-$.
Assume the batch for particle $1$ is $\mathcal{C}_q^{(k)}$.
Denote
\[
B_k=\Big\{\forall j, \ell\in \mathcal{C}_q^{(k)}, j\neq \ell, L_j^{(k)}\cap L_{\ell}^{(k)}=\emptyset \Big\}.
\]

Let $\mathcal{B}=\mathcal{C}_q^{(k)} \setminus \{1\}$ be the set of other particles that share the same batch with particle $1$. Then, by definition of $A_{k+1}$,
\begin{multline}
\mathbb{P}(1\in A_{k+1})
=\sum_{j_1,\cdots, j_{p-1}}\bbP(\mathcal{B}=\{j_1,\cdots, j_{p-1}\})\times \\
 \mathbb{P}(B_k\cap \{1 \in A_k\}\cap_{\ell=1}^{p-1}\{j_{\ell}\in A_k\} | \mathcal{B}=\{j_1,\cdots, j_{p-1}\} ).
\end{multline}

Denote $E:=\{\mathcal{B}=\{j_1,\cdots, j_{p-1}\}\}$, where we omit the dependence in $j_{\ell}, 1\le \ell\le p-1$ for notational convenience.
Conditioning on $B_k\cap E $ (i.e., provided that the event $B_k\cap E$ happens), whether the particles are clean or not are independent. Hence,
\begin{multline*}
\bbP(B_k\cap \{1 \in A_k\}\cap_{\ell=1}^{p-1}\{j_{\ell}\in A_k\} | E)
=\bbP(B_k| E)\bbP(\{1 \in A_k\}\cap_{\ell=1}^{p-1}\{j_{\ell}\in A_k\} | E, B_k)\\
=\bbP(B_k | E) \prod_{\ell=1}^{p}\bbP(j_{\ell}\in A_k | E, B_k),
\end{multline*}
where we have set $j_p=1$. Moreover, 
\begin{multline*}
\bbP(j_{\ell}\in A_k | E, B_k)
=\frac{\bbP(\{j_{\ell}\in A_k\}\cap E\cap B_k)}{\bbP(E\cap B_k)}
=\frac{\bbP(\{1\in A_k\}\cap E\cap B_k)}{\bbP(E\cap B_k)}=\frac{\bbP(\{1\in A_k\}\cap B_k)}{\bbP( B_k)}.
\end{multline*}
The second and the last equalities are due to symmetry. For the last equality, 
$\bbP(\{\mathcal{B}=\{j_1,\cdots, j_{p-1}\}\}\cap B_k)$ should be equal for all possible $j_1,\cdots, j_{p-1}$, and the same is true for the numerator. 
 This actually is a kind of independence.
Hence, eventually due to the fact 
\[
\sum_{j_1,\cdots, j_{p-1}}\bbP(\mathcal{B}=\{j_1,\cdots, j_{p-1}\})\bbP(B_k| \mathcal{B}=\{j_1,\cdots, j_{p-1}\})=\bbP(B_k),
\]
one has
\[
1-\epsilon_{k+1}=\mathbb{P}(1\in A_{k+1})
\ge \mathbb{P}(B_k)(1-\epsilon_k/\mathbb{P}(B_k))^p.
\]

Hence, it suffices to show
\[
\lim_{N\to\infty}\mathbb{P}(B_k)=1.
\]
To get an estimate for this, we consider the following equivalent way to construct $L_i^{(k)}$: one starts with $L_i \leftarrow \{i\}$ and  repeat the following for $k$ times:
\begin{enumerate}[(1)]
\item  Set $L_{\mathrm{tmp}}\leftarrow L_i$ and $A=\emptyset$.
\item  Loop the following until $L_{\mathrm{tmp}}$ is empty.
\begin{enumerate}[(a)]
\item Pick a particle $i_1 \in L_{\mathrm{tmp}}$, then choose $p-1$ particles from $\{1,\cdots, N\}\setminus A\cup\{i_1\}$ denoted by $\{i_2, \cdots, i_{p}\}$. 

\item Set $L_i \leftarrow L_i\cup \{i_2, \cdots, i_p\}$.
\item Set $A\leftarrow A\cup \{i_1, i_2,\cdots, i_p\}$.
\item Set $L_{\mathrm{tmp}}\leftarrow L_{\mathrm{tmp}}\setminus \{i_1, i_2, \cdots, i_p\}$.
\end{enumerate}
\end{enumerate}
In the above, we are actually looking back from $t_{k-1}$. In the $j$th iteration,
we are constructing batches at $t_{k-j}$. Hence, this is an equivalent way to construct $L_i^{(k)}$.

Now, we estimate $\mathbb{P}(B_k)$ by constructing the lists $L_{j_{\ell}}^{(k)}: 1\le \ell\le p$ for $j_{\ell}\in C_q^{(k)}$ using the above procedure. Consider that the lists for $j_{1},\cdots, j_{\ell-1}$ have been constructed, which have included at most
$(\ell-1)p^k$ particles. Now, for $L_{j_{\ell}}^{(k)}$ not to intersect
with the previous lists, one has to choose particles
from $\{1,\cdots, N\}\setminus [\cup_{z=1}^{\ell-1}L_{j_z}^{(k)}\cup A\cup \{i_1\}]$
in 2(a) step. Conditioning on the specific choices of $L_{j_{\ell}}^{(k)}: 1\le \ell\le p$ and $A$ with
\[
N_1:=|L_{j_{\ell}}^{(k)}\cup A\cup \{i_1\}|,~~N_2:=|A|,
\]
this probability is controlled from below by
\[
\frac{{{N-N_1}\choose{p-1}}}{{{N-1-N_2}\choose{p-1}}} \ge \frac{{{N-1-\ell p^k}\choose{p-1}}}{{{N-1}\choose{p-1}}}.
\]
Hence, as $N\to\infty$,
\[
\bbP(B_k)\ge \prod_{\ell=1}^{p}\left[\frac{{{N-1-\ell p^k}\choose{p-1}}}{{{N-1}\choose{p-1}}}\right]^k=1-O(N^{-1}).
\]
Hence, $\lim_{N\to\infty}\epsilon_{k+1}=0$ and the claim follows.
\end{proof}
As can be seen in the proof, one actually has $\epsilon_k\le C(p,k)N^{-1}$ for some $C(p,k)>0$. This rate is different from the typical $O(N^{-1/2})$ rate (though under $W_2$ distance) for the mean field limit of interacting particle systems due to law of large number results.

\begin{remark}
The current argument of the mean field limit relies on the fact that two particles are unlikely to be related when $N\to\infty$ for finite iterations. This is not enough to get the mean field limit independent of $\tau$. For fixed $N$, $\epsilon_{k}\to 1$ as $k\to\infty$. As pointed in \cite{jin2020random}, RBM works due to the averaging effect in time. The regime we consider here (finite iterations and $N\to \infty$) is clearly far before the averaging effect in time comes into play. To consider the mean field limit uniform in $\tau$ (the averaging mechanism can take effect), one must consider carefully how the correlation decays as $k$ grows when two particles are not totally clean to each other. The study of this creation of chaos will be left for the future. 
\end{remark}

\section{Properties of the limiting dynamics }\label{sec:tautozero}

We consider the limit dynamics given by the operator $\mathcal{G}_{\infty}$ (defined in \eqref{eq:Ginfty}) and its approximation to the dynamics of the nonlinear Fokker-Planck equation \eqref{eq:nonlinearFP}, the mean-field limit of the interacting particle system \eqref{eq:Nparticlesys}.

As proved in \cite{jin2020random}, the error between the one marginal distribution of the RBM particle system \eqref{eq:firstalgorithm} and that of \eqref{eq:Nparticlesys} are close independent of $N$ under $W_2$ distance (the left side in Fig. \ref{fig:operator}). Combining the mean field result in section \ref{sec:pfmeanfield} and taking $N\to\infty$,  one sees that the dyanmics of $\mathcal{G}_{\infty}$ is close to that of \eqref{eq:nonlinearFP} (the right side in Fig. \ref{fig:operator}). In other words, the two limits $\lim_{N\to\infty}$ and $\lim_{\tau \to 0}$ commute.

A direct application of the strong mean square error in \cite{jin2020random} gives an upper bound $O(\sqrt{\tau})$ for the $W_2$ distance corresponding to the left side in Fig. \ref{fig:operator}, and thus the right side in Fig. \ref{fig:operator} after taking $N\to\infty$.  It is shown in \cite{jin2020convergence}  (though for $b(\cdot)$ being bounded) that the weak error is $O(\tau)$. The $W_{\sq}$ distance is a kind of weak topology as it measures the closeness between distributions instead of the trajectories of particles. Hence, the sharp upper bound for the Wasserstein distance between these two marginal distributions is believed to be $O(\tau)$, even for unbounded $b(\cdot)$.  Below, we aim to prove these under $W_1$ distance.

\subsection{Stability of the limiting dynamics}

In this section, we study the stability and contraction properties of the nonlinear operator $\mathcal{G}_{\infty}$ for the limiting dynamics.

\begin{proposition}\label{pro:stabilityW2}
Under Assumption \ref{ass:kernelfunctions},  $\mathcal{G}_{\infty}$ satisfies for $\sq\in [1,\infty)$ that
\begin{gather}\label{eq:limitingW2stable}
W_{\sq}(\mathcal{G}_{\infty}(\mu_1), \mathcal{G}_{\infty}(\mu_2))
\le e^{(\beta+2L)\tau}W_{\sq}(\mu_1, \mu_2),~~\mu_i\in \bfP(\R^d),~i=1,2.
\end{gather}

The operator $\mathcal{G}_{\infty}$ is a contraction in $W_{\sq}$ under Assumption \ref{ass:kernelstrong}:
\begin{gather}
W_{\sq}(\mathcal{G}_{\infty}(\mu_1), \mathcal{G}_{\infty}(\mu_2))
\le e^{-(r-2L) \tau} W_{\sq}(\mu_1, \mu_2),
\end{gather}
so that $\mathcal{G}_{\infty}$ has a unique invariant measure $\pi_{\tau}$ and 
it holds for any $\mu_0$ that
\begin{gather}
W_{\sq}(\mathcal{G}_{\infty}^n(\mu_0), \pi_{\tau})\le e^{-(r-2L)n\tau} W_{\sq}(\mu_0, \pi_{\tau}).
\end{gather}

\end{proposition}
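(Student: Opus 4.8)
The plan is to realize the operator $\mathcal{G}_\infty$ through its microscopic description \eqref{eq:RBMmeanfieldSDE} and to run a \emph{synchronous coupling}. Given $\mu_1,\mu_2\in\bfP(\R^d)$, fix an optimal transport plan $\gamma^\star\in\Pi(\mu_1,\mu_2)$ for $W_\sq$, and let $(Y^i_0,Z^i_0)$, $i=1,\dots,p$, be i.i.d.\ draws from $\gamma^\star$, independent of the Brownian motions $W^i$. Let $\{Y^i(t)\}_{i=1}^p$ solve \eqref{eq:RBMmeanfieldSDE} on $[0,\tau]$ with $Y^i(0)=Y^i_0$, and let $\{Z^i(t)\}_{i=1}^p$ solve the same system driven by the \emph{same} $W^i$ with $Z^i(0)=Z^i_0$. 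By the definition \eqref{eq:Ginfty} of $\mathcal{G}_\infty$ (the $p$-body Fokker--Planck evolution \eqref{eq:firstalgorithm} with tensorized data, read off on the first marginal), $Y^1(\tau)\sim\mathcal{G}_\infty(\mu_1)$ and $Z^1(\tau)\sim\mathcal{G}_\infty(\mu_2)$, so $(Y^1(\tau),Z^1(\tau))$ is an admissible coupling of the two images, and it suffices to bound $\E|Y^1(\tau)-Z^1(\tau)|^\sq$. Writing $e^i:=Y^i-Z^i$, the noise cancels, so each $e^i$ solves the random ODE $\dot e^i=b(Y^i)-b(Z^i)+\frac1{p-1}\sum_{j\ne i}\bigl(K(Y^i-Y^j)-K(Z^i-Z^j)\bigr)$.

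Next I would estimate $\frac{d}{dt}|e^i|^\sq=\sq|e^i|^{\sq-2}e^i\cdot\dot e^i$. Using the one-sided Lipschitz bound $(Y^i-Z^i)\cdot(b(Y^i)-b(Z^i))\le\beta|e^i|^2$ from Assumption \ref{ass:kernelfunctions}, the Lipschitz bound $|K(Y^i-Y^j)-K(Z^i-Z^j)|\le L|e^i-e^j|\le L(|e^i|+|e^j|)$, and then Young's inequality $|e^i|^{\sq-1}|e^j|\le\frac{\sq-1}{\sq}|e^i|^\sq+\frac1\sq|e^j|^\sq$, one obtains a pointwise differential inequality of the form $\frac{d}{dt}|e^i|^\sq\le\bigl(\sq\beta+(2\sq-1)L\bigr)|e^i|^\sq+\frac{L}{p-1}\sum_{j\ne i}|e^j|^\sq$. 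The construction is exchangeable in $i$ (i.i.d.\ pairs, i.i.d.\ Brownian motions, symmetric drift), so $\E|e^i(t)|^\sq$ is independent of $i$; taking expectations collapses the last sum and yields $\frac{d}{dt}\E|e^i|^\sq\le\sq(\beta+2L)\E|e^i|^\sq$. Grönwall on $[0,\tau]$ with $\E|e^1(0)|^\sq=W_\sq^\sq(\mu_1,\mu_2)$ then gives $W_\sq^\sq(\mathcal{G}_\infty(\mu_1),\mathcal{G}_\infty(\mu_2))\le\E|e^1(\tau)|^\sq\le e^{\sq(\beta+2L)\tau}W_\sq^\sq(\mu_1,\mu_2)$, which is \eqref{eq:limitingW2stable} after taking $\sq$-th roots. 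Under Assumption \ref{ass:kernelstrong} the identical computation with $\beta$ replaced by $-r$ produces the factor $e^{\sq(-r+2L)\tau}$, and since $r>2L$ this makes $\mathcal{G}_\infty$ a strict contraction in $W_\sq$ with rate $e^{-(r-2L)\tau}$.

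For the last two assertions I would invoke the Banach fixed point theorem. By Lemma \ref{lmm:momentsoflimitingdynamics}, $\mathcal{G}_\infty$ maps the space $\bfP_\sq(\R^d)$ of probability measures with finite $\sq$-th moment into itself; this space is complete under $W_\sq$, so the contraction just established has a unique fixed point $\pi_\tau=\mathcal{G}_\infty(\pi_\tau)$. Applying the contraction estimate with $\mu_1=\mathcal{G}_\infty^{n-1}(\mu_0)$, $\mu_2=\pi_\tau$ and iterating (using $\mathcal{G}_\infty(\pi_\tau)=\pi_\tau$) gives $W_\sq(\mathcal{G}_\infty^n(\mu_0),\pi_\tau)\le e^{-(r-2L)n\tau}W_\sq(\mu_0,\pi_\tau)$.

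The only point requiring care — the mild obstacle — is that $x\mapsto|x|^\sq$ is not $C^2$ when $\sq\in[1,2)$, so the differentiation of $|e^i|^\sq$ should be made rigorous by regularizing, e.g.\ working with $(|e^i|^2+\delta)^{\sq/2}$ and letting $\delta\to0$, exactly as is implicit in the proofs of Lemmas \ref{lmm:momentsoflimitingdynamics}--\ref{lmm:momentsofrbm}; because the noise has dropped out of the $e^i$-equation this is an ODE computation with no Itô correction term, so no extra subtlety arises. (Well-posedness of \eqref{eq:RBMmeanfieldSDE} under the one-sided Lipschitz/linear growth hypotheses, together with the non-explosion furnished by the moment bounds, is standard and can be taken for granted.) Everything else is the same one-sided-Lipschitz and Young bookkeeping already used in Section \ref{sec:pfmeanfield}.
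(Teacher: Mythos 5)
Your proposal is correct and follows essentially the same route as the paper: a synchronous coupling of two copies of the microscopic $p$-body system \eqref{eq:RBMmeanfieldSDE} with i.i.d.\ coupled initial pairs and identical Brownian motions, one-sided-Lipschitz plus Young plus exchangeability to close the Gr\"onwall estimate for $\E|Y^1-Z^1|^{\sq}$, regularization of $|\cdot|^{\sq}$ to handle the non-smooth case, and the Banach fixed-point theorem on $(\bfP_{\sq}(\R^d),W_{\sq})$ for the invariant measure. The only cosmetic difference is that the paper works with an $\epsilon$-optimal plan and sends $\epsilon\to 0$ rather than invoking an optimal plan directly; both are fine.
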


\begin{proof}

Consider two copies of \eqref{eq:RBMmeanfieldSDE}: one is
\begin{gather}
dY_1^i=b(Y_1^i)\,dt+\frac{1}{p-1}\sum_{j=1,j\neq i}^{p}K(Y_1^i-Y_1^j)\,dt
+\sqrt{2}\sigma\,dW^i,~~i=1,\cdots, p,
\end{gather}
with $(Y_1^1(0), \cdots, Y_1^p(0))$ being drawn from $\mu_1^{\otimes p}$;
the other one is
\begin{gather}
dY_2^i=b(Y_2^i)\,dt+\frac{1}{p-1}\sum_{j=1,j\neq i}^{p}K(Y_2^i-Y_2^j)\,dt
+\sqrt{2}\sigma\,dW^i,~~i=1,\cdots, p,
\end{gather}
with $(Y_2^1(0), \cdots, Y_2^p(0))$ being drawn from $\mu_2^{\otimes p}$.

For any $\epsilon>0$, choose the coupling as follows. First, choose a coupling $\gamma$ for $Y_1^1$ and $Y_2^1$ such that
\[
\E|Y_1^1(0)-Y_2^1(0)|^{\sq}\le \epsilon+W_{\sq}^{\sq}(\mu_1, \mu_2).
\]
Then, let the samples $(Y_1^i(0), Y_2^i(0))$ be i.i.d., drawn from $\gamma$. 
Let the Brownian motions for the two systems be the same. 

Now, to show the claims, it suffices to show that the moments of the SDE system \eqref{eq:RBMmeanfieldSDE} are stable. In fact, the joint distribution of $(Y_1^1(\tau), Y_2^1(\tau))$ is a coupling for $\mathcal{G}_{\infty}(\mu_1)$
and $\mathcal{G}_{\infty}(\mu_2)$:
\[
W_{\sq}(\mathcal{G}_{\infty}(\mu_1), \mathcal{G}_{\infty}(\mu_2)) 
\le (\E|Y_1^1(\tau)-Y_2^1(\tau)|^{\sq})^{1/\sq}.
\]

Using the symmetry, it can be computed directly that under Assumption \ref{ass:kernelfunctions}
\[
\frac{d}{dt}\E |Y_1^1-Y_2^1|^{\sq}
\le \sq (\beta+2L) \E|Y_1^1-Y_2^1|^{\sq},
\]
and that under Assumption \ref{ass:kernelstrong}
\[
\frac{d}{dt}\E |Y_1^1-Y_2^1|^{\sq}
\le \sq (-r+2L)\E|Y_1^1-Y_2^1|^{\sq}.
\]
For $\sq=1$, one can use $\sqrt{|Y_1^1-Y_2^1|^2+\delta}$ to approximate and then take $\delta\to 0^+$. Applying Gr\"onwall's inequality and noticing $\e$ is arbitrary, one obtains the first two assertions directly. 
The last claim follows from the standard contraction mapping theorem \cite[Chap. 1]{granas2013}.
\end{proof}

\subsection{Basic properties of the nonlinear Fokker-Planck equation}

We establish several basic results to  \eqref{eq:nonlinearFP} using a stronger version of Assumption \ref{ass:momentmu0}:
\begin{assumption}\label{ass:newrho0}
The measure $\mu_0$ has a density $\varrho_0$ that is smooth with finite moments
$\int_{\R^d}|x|^q\varrho_0\,dx<\infty$, $\forall q\ge 1$,
and the entropy is finite
\begin{gather}
H(\mu_0):=\int_{\R^d}\varrho_0\log\varrho_0\,dx<\infty.
\end{gather}
\end{assumption}
If $\varrho_0(x)=0$ at some point $x$, one defines $\varrho_0(x)\log\varrho_0(x)=0$.
We also introduce the following assumption on the growth rate of derivatives of $b$ and $K$, which will be used below.
\begin{assumption}\label{ass:polynomialgrowth}
The function $b$ and its derivatives have polynomial growth.
The derivatives of $K$ with order at least $2$ (i.e., $D^{\alpha}K$ with $|\alpha|\ge 2$) have polynomial growth.
\end{assumption}

Based on these conditions, equation \eqref{eq:nonlinearFP} can be formulated in terms of the density of $\mu$:
\begin{gather}\label{eq:nonFPdensity}
\begin{split}
&\partial_t\varrho=-\nabla\cdot((b(x)+K*\varrho)\varrho)+\sigma^2\Delta\varrho,\\
&\varrho(0)=\varrho_0.
\end{split}
\end{gather}
Then, a weak solution to \eqref{eq:nonFPdensity} corresponds to 
a measure solution $\mu=\varrho\,dx$ to \eqref{eq:nonlinearFP}, where the weak solution is defined as follows.
\begin{definition}
We say $\varrho\in L^{\infty}([0, T]; L^1(\R^d))$ is a weak solution to \eqref{eq:nonFPdensity}, if  $\varrho\,dx\in C([0, T]; \bfP(\R^d))$ where $\bfP(\R^d)$ is equipped with the weak topology, and for any $\varphi\in C_c^{\infty}(\R^d)$, it holds for any $t\le T$ that
\begin{multline}
\int_{\R^d} \varrho(x,t)\varphi(x)\,dx-\int_{\R^d}\varrho_0(x)\varphi(x)\,dx\\
=\int_0^t\int_{\R^d} \nabla\varphi(x)\cdot(b(x)+K*\varrho)\,\rho(x,s)dxds
+\sigma^2\int_0^t\int_{\R^d}\Delta\varphi(x) \varrho(x,s)\,dxds.
\end{multline}
\end{definition}

Note that the test function used here does not depend on time variable, so we require the integral equation to hold for any $t\le T$. Due to the relation between
\eqref{eq:nonFPdensity} and  \eqref{eq:nonlinearFP}, we will not distinguish the measure and its density. For example, we will use $\mathcal{G}_{\infty}(\varrho_0)$
to mean the nonlinear semigroup acting on the measure $\mu_0$, and will use 
$W_{\sq}(\varrho, \nu)$ to mean the Wasserstein-$\sq$ distance between $\mu=\varrho\,dx$ and another measure $\nu$.

We have the following regarding the well-posedness of the nonlinear Fokker-Planck equation \eqref{eq:nonFPdensity}.
\begin{proposition}\label{pro:nonlinearfpestimates}
Let  Assumption \ref{ass:kernelfunctions} or  Assumption \ref{ass:kernelstrong} hold, and also $|b|+|\nabla b|\le C(1+|x|^q)$
for some $C, q$. Fix any $T>0$. Assume the initial data $\varrho_0$ satisfies Assumption \ref{ass:newrho0}. Then, the nonlinear Fokker-Planck equation \eqref{eq:nonFPdensity} has a unique weak solution satisfying
$\sup_{0\le t\le T}\int_{\R^d} |x|\varrho\,dx<\infty$.
Moreover, this solution is a strong solution and is smooth together with the moment control:
\begin{gather}
\sup_{0\le t\le T}\int_{\R^d} |x|^q\varrho\,dx\le C(q,T)\int_{\R^d}|x|^q\varrho_0\,dx.
\end{gather}
Besides, under Assumption \ref{ass:kernelstrong}, the moments are uniformly bounded in $t$, i.e., the constants $C(q,T)$ above can be made independent on $T$.
Moreover, $\mu=\varrho\,dx$ converges in $W_{\sq},\sq\ge 1$ to an invariant measure $\pi$ exponentially as $t\to\infty$.
\end{proposition}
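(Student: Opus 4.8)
The plan is to construct the solution as the law of the associated nonlinear (McKean--Vlasov) SDE
\[
dX_t=b(X_t)\,dt+(K*\mu_t)(X_t)\,dt+\sqrt{2}\,\sigma\,dW_t,\qquad \mu_t=\mathscr{L}(X_t),\quad X_0\sim\mu_0,
\]
and then to upgrade its regularity and establish uniqueness by PDE and coupling estimates. For each fixed measure flow $\mu$, the drift $b+K*\mu_t$ is again one-sided Lipschitz (as $K$ is Lipschitz), so the linear SDE it generates is well posed; the resulting map on measure flows depends on $\mu$ only through $K*\mu_t$, which is Lipschitz in $x$ uniformly in $\mu$, so a short-time Gr\"onwall estimate makes it a contraction in $\sup_{[0,T_0]}W_2$, and it is extended to $[0,T]$ using the a priori moment bounds (alternatively, one may truncate $b$, solve, and pass to the limit). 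The moment bounds are the single-particle version of Lemma \ref{lmm:momentsoflimitingdynamics}: It\^o's formula for $|X_t|^q$ together with $x\cdot b(x)\le\beta|x|^2+C|x|$ (resp.\ $\le-r|x|^2+C|x|$), $|K(x-y)|\le|K(0)|+L(|x|+|y|)$, Young's inequality and Gr\"onwall give $\frac{d}{dt}\E|X_t|^q\le\alpha_1(q)\E|X_t|^q+C$, hence $\sup_{t\le T}\int_{\R^d}|x|^q\varrho\,dx\le C(q,T)\int_{\R^d}|x|^q\varrho_0\,dx$, with $\alpha_1(q)$ and $C(q,T)$ uniform in $T$ under Assumption \ref{ass:kernelstrong}. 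Applying It\^o's formula to $\varphi(X_t)$ for $\varphi\in C_c^\infty$ shows that $\varrho_t:=\mathscr{L}(X_t)$ solves \eqref{eq:nonFPdensity} weakly.

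Next I would bootstrap regularity. Under the moment control, the effective drift $V(x,t):=b(x)+(K*\varrho_t)(x)$ is smooth in $x$: the first derivative $\nabla K*\varrho_t$ is bounded since $K$ is Lipschitz, and for $|\alpha|\ge2$ one has $D_x^\alpha(K*\varrho_t)=(D^\alpha K)*\varrho_t$, which is finite and of polynomial growth by Assumption \ref{ass:polynomialgrowth} and the moments of $\varrho_t$; moreover $t\mapsto V(\cdot,t)$ is continuous on bounded sets because $\varrho_t\in C([0,T];\bfP(\R^d))$. Reading \eqref{eq:nonFPdensity} as the \emph{linear} Fokker--Planck equation $\partial_t\varrho=-\nabla\cdot(V\varrho)+\sigma^2\Delta\varrho$ with this smooth coefficient, interior parabolic (Schauder/hypoelliptic) regularity produces a smooth solution when $\sigma>0$, while for $\sigma=0$ the equation is the continuity equation and $\varrho_t$ is the pushforward of $\varrho_0$ along the smooth flow of $V$; in both cases smoothness holds up to $t=0$ since $\varrho_0$ is smooth, and by uniqueness for the linear equation this smooth object coincides with $\mathscr{L}(X_t)$. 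Thus the weak solution built above is a smooth strong solution and all the earlier manipulations are justified a posteriori.

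For uniqueness I would run a Wasserstein stability estimate, and here is the step I expect to be the main obstacle: a priori a weak solution is only assumed to have a finite first moment, which does not even make $\int_0^T\!\!\int_{\R^d}(|b|+|K*\varrho|)\,\varrho\,dx\,dt$ finite when $q>1$, let alone justify a coupling. So one must first show that every weak solution with $\sup_t\int_{\R^d}|x|\varrho\,dx<\infty$ has all moments finite: test \eqref{eq:nonFPdensity} against smooth, compactly supported approximations of $|x|^q$, observing that it is the \emph{sign} structure (one-sided Lipschitz, resp.\ strong confinement), not merely the polynomial growth of $b$, that keeps the otherwise dangerous term $\int_{\R^d}q|x|^{q-2}x\cdot b(x)\,\varrho\,dx$ bounded by $C\int_{\R^d}|x|^q\varrho\,dx+C$; a cutoff-error and Fatou argument (or a stopping-time localisation of the linear SDE with drift $b+K*\varrho$) then upgrades this to the full moment bound, after which the regularity above applies to every weak solution. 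Given two weak solutions $\varrho^1,\varrho^2$, realise each as the law of the linear SDE with drift $b+K*\varrho^i$, couple them with the same Brownian motion and an optimal coupling of the initial data, and estimate $\frac{d}{dt}\E|X_t^1-X_t^2|^{\sq}$ pathwise: with $x\cdot(b(x)-b(y))\le\beta|x-y|^2$ and, bounding $(K*\varrho^1_t)(x)-(K*\varrho^2_t)(x')=\int_{\R^d\times\R^d}[K(x-y)-K(x'-y')]\,\gamma_t(dy,dy')$ (where $\gamma_t=\mathscr{L}(X^1_t,X^2_t)$) by $L\big(|x-x'|+\E|X^1_t-X^2_t|\big)$ and then using H\"older, one arrives at $\frac{d}{dt}\E|X_t^1-X_t^2|^{\sq}\le\sq(\beta+2L)\,\E|X_t^1-X_t^2|^{\sq}$ (for $\sq=1$ via the usual $\sqrt{|\cdot|^2+\delta}$ regularisation); Gr\"onwall with equal initial data forces $W_\sq(\varrho^1_t,\varrho^2_t)=0$.

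Finally, under Assumption \ref{ass:kernelstrong} the same coupling with $\beta$ replaced by $-r$ yields $W_\sq(\mathcal{S}(t)\mu_0,\mathcal{S}(t)\nu_0)\le e^{-(r-2L)t}W_\sq(\mu_0,\nu_0)$ with $r-2L>0$. Since the uniform-in-$t$ moment bound keeps the flow inside a fixed $W_\sq$-bounded, hence complete, subset of $(\bfP_{\sq}(\R^d),W_\sq)$, the Banach fixed point theorem provides a unique stationary measure $\pi$ --- which is a fixed point of every $\mathcal{S}(t)$, by uniqueness of the fixed point of $\mathcal{S}(t_0)$ for any single $t_0$ --- and $W_\sq(\varrho_t,\pi)\le e^{-(r-2L)t}W_\sq(\mu_0,\pi)\to 0$ as $t\to\infty$.
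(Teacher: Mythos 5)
Your proof is correct but takes a genuinely different route from the paper in two of the three main steps. For existence, you set up a Picard fixed-point iteration on measure flows, constructing $\varrho$ directly as the law of the McKean--Vlasov SDE; the paper instead truncates $b$ by a cutoff $\chi(x/N)$, solves the truncated PDE, derives uniform moment \emph{and entropy} bounds, and passes to the limit via tightness and the Dunford--Pettis theorem. Your route sidesteps the entropy estimate entirely, at the cost of front-loading the SDE well-posedness under a one-sided Lipschitz drift. For uniqueness, you realize each weak solution $\varrho^i$ as the law of the linear SDE with frozen drift $b + K*\varrho^i$, couple these two SDEs synchronously, and close a Gr\"onwall inequality in $\E|X^1_t - X^2_t|^{\sq}$; you also correctly flag the need to first bootstrap all moments from the a priori finite first moment, which the paper's Step~3.2 uses implicitly (its variance estimate for $D_1$ requires a second moment of $\varrho$). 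The paper deliberately avoids such a direct two-solution coupling, remarking that a Dobrushin-type argument ``cannot be used because the flow map is not well-defined before we show the uniqueness of $\varrho$'', and instead compares each weak solution to the $N$-particle system via a Sznitman variance-cancellation argument before sending $N\to\infty$. Your coupling is nonetheless valid, because the flows you invoke are the \emph{linear} ones associated with the known drifts $b + K*\varrho^i$, which the paper's own Step~3.1 establishes to be well-defined and to reproduce $\varrho^i$; the paper's caution pertains to the nonlinear flow. A side benefit of the paper's longer detour is that it proves the $O(N^{-1/2})$ mean-field limit (the Proposition immediately following Proposition~\ref{pro:nonlinearfpestimates}) as a by-product; your route is shorter but does not recover that. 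The moment estimates, the regularity bootstrap through the linear parabolic/transport equation, the strong-confinement contraction, and the Banach fixed-point construction of the invariant measure $\pi$ are essentially the same in both proofs.
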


There are many works on similar models in literature, and see \cite{mckean1966class,carrillo2003kinetic,cattiaux2008,barbu2018nonlinear} as a few of examples. However, in our case, $b$ and $K$ are not bounded and $b$ can have polynomial growth at infinity, so the proofs in these works do not quite fit our setting here. For example, in the work of \cite{carrillo2003kinetic,cattiaux2008}, $b=-\nabla V$ and they require $\nabla V\cdot x\ge C$ for some constant while we allow $b\cdot x\le \beta|x|^2$; also the requirements on the kernel $K(\cdot)$ also do not quite match the setup here. In the work \cite{barbu2018nonlinear}, a certain class of nonlinear Fokker-Planck equations have been studied via the approach of Crandall and Liggett for $m$-accretive operators in $L^1(\R^d)$, but the approach cannot be applied directly to  our case here.
Due to these reasons, we attach a proof of Proposition \ref{pro:nonlinearfpestimates} in Appendix \ref{app:proofofnonlinearFP} for a reference.

In proving the uniqueness of the solution to \eqref{eq:nonFPdensity} in Appendix \ref{app:proofofnonlinearFP}, we have in fact proved the following mean-field limit:
\begin{proposition}
With the same assumptions of Proposition \ref{pro:nonlinearfpestimates}, one has
\begin{gather}
\sup_{0\le t\le T}W_2(\varrho, \mu_N^{(1)})\le \frac{C(T)}{\sqrt{N}},
\end{gather}
where $\mu_N^{(1)}$ is the one marginal distribution of the interacting particle system \eqref{eq:Nparticlesys}. Moreover, if Assumption \ref{ass:kernelstrong} holds, the constant $C(T)$ can be independent of $T$.
\end{proposition}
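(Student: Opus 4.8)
The plan is to establish the bound by a synchronous coupling of the particle system \eqref{eq:Nparticlesys} with the associated McKean--Vlasov (nonlinear) diffusion, closing the estimate with Gr\"onwall's inequality; the rate $N^{-1/2}$ (in $W_2$) will come from an $O(N^{-1})$ law-of-large-numbers bound on the mean square distance.

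First I would introduce, for $i=1,\dots,N$, the nonlinear process
\begin{gather*}
d\bar X^i = b(\bar X^i)\,dt + (K*\varrho)(\bar X^i)\,dt + \sqrt{2}\,\sigma\, dW^i,\qquad \bar X^i(0)=X^i_0,
\end{gather*}
driven by the \emph{same} Brownian motions $W^i$ and the \emph{same} i.i.d.\ initial data $X^i_0$ as in \eqref{eq:Nparticlesys}, where $\varrho$ is the unique weak solution of \eqref{eq:nonFPdensity} given by Proposition \ref{pro:nonlinearfpestimates}. Since $b$ is one-sided Lipschitz and $x\mapsto (K*\varrho)(x)$ is globally Lipschitz with constant at most $L$, this SDE is globally well posed, and, as its drift depends only on $\bar X^i$ and the deterministic curve $\varrho(\cdot,t)$, the processes $\bar X^1,\dots,\bar X^N$ are i.i.d.; by uniqueness for the associated linear Fokker--Planck equation one has $\mathscr L(\bar X^i(t))=\varrho(\cdot,t)$ for all $t$. (Global existence for \eqref{eq:Nparticlesys} itself follows from the one-sided Lipschitz bound, which rules out blow-up.)

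Next I would set $Z^i:=X^i-\bar X^i$ and $e(t):=\E|Z^i(t)|^2$, which is independent of $i$ by exchangeability. Writing the interaction difference as
\begin{gather*}
\frac{1}{N-1}\sum_{j\neq i}\big[K(X^i-X^j)-K(\bar X^i-\bar X^j)\big]+\frac{1}{N-1}\sum_{j\neq i}\big[K(\bar X^i-\bar X^j)-(K*\varrho)(\bar X^i)\big]=:I_1^i+I_2^i,
\end{gather*}
applying It\^o's formula (there is no martingale part, the Brownian motions cancel), using the one-sided Lipschitz bound on $b$ (with constant $\beta$ under Assumption \ref{ass:kernelfunctions}, or $-r$ under Assumption \ref{ass:kernelstrong}), the Lipschitz bound on $K$ for $I_1^i$, and Cauchy--Schwarz, Jensen and Young's inequalities, one gets, for every $\delta>0$,
\begin{gather*}
e'(t)\le \big(2\beta+4L+\delta\big)\,e(t)+\frac1\delta\,\E|I_2^i|^2,
\end{gather*}
with $2\beta$ replaced by $-2r$ under Assumption \ref{ass:kernelstrong}. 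The term $I_2^i$ is where the fluctuation estimate enters: conditioning on $\bar X^i$, the random variables $K(\bar X^i-\bar X^j)-(K*\varrho)(\bar X^i)$, $j\neq i$, are i.i.d.\ and centered, so all cross terms vanish and, using the linear growth of $K$ and the polynomial moment bounds from Proposition \ref{pro:nonlinearfpestimates},
\begin{gather*}
\E|I_2^i|^2\le \frac{1}{N-1}\,\E\big|K(\bar X^i-\bar X^j)\big|^2\le \frac{C(T)}{N},\qquad j\neq i.
\end{gather*}
Since $e(0)=0$, Gr\"onwall gives $e(t)\le C(T)/N$ on $[0,T]$; under Assumption \ref{ass:kernelstrong} one chooses $\delta$ small enough that $-2r+4L+\delta<0$ (possible since $r>2L$), and then, the moment bounds being uniform in time, the constant $C(T)$ becomes independent of $T$. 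Finally $(\bar X^i(t),X^i(t))$ is a coupling of $\varrho(\cdot,t)$ and $\mu_N^{(1)}$, so $W_2^2(\varrho,\mu_N^{(1)})\le e(t)\le C(T)/N$, which is the claim.

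I expect the only genuinely delicate point to be the fluctuation estimate $\E|I_2^i|^2\le C(T)/N$: since every summand carries the common factor $\bar X^i$, independence is available only after conditioning on $\bar X^i$, and one must combine that conditional independence with the (uniform-in-$T$, under strong confinement) polynomial moment estimates on the nonlinear process to control the unbounded kernel $K$. The remaining steps are the standard Gr\"onwall bookkeeping, which is in fact carried out in the uniqueness argument of Appendix \ref{app:proofofnonlinearFP}.
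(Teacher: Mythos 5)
Your proof is correct and follows essentially the same synchronous-coupling argument that the paper itself gives in Step 3.2 of Appendix \ref{app:proofofnonlinearFP}: couple the nonlinear McKean--Vlasov processes $\bar X^i$ to the $N$-particle system via the same Brownian motions and initial data, split the interaction error into a Lipschitz term and a centered fluctuation term, and use conditional independence to get the $O(1/N)$ bound on the fluctuation's second moment. The only cosmetic difference is that you close the estimate for $e(t)=\E|X^i-\bar X^i|^2$ by Young's inequality, whereas the paper applies Cauchy--Schwarz to $D_1$ and runs Gr\"onwall on $\sqrt{e(t)}$; both yield the same $N^{-1/2}$ rate and the same strong-confinement threshold $r>2L$ for the $T$-uniform constant.
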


As long as the existence and uniqueness of the solutions to the nonlinear Fokker-Planck equation have been established, one can regard 
\begin{gather}
\bar{K}(x, t):=\int_{\R^d} K(x-y)\varrho(y,t)\,dy,
\end{gather}
as known, and the properties of $\varrho$ can be studied via the {\it linear} Fokker-Planck equation
\begin{gather}
\partial_t \varrho=-\nabla\cdot[(b(x)+\bar{K}(x, t))\varrho]+\sigma^2\Delta\varrho.
\end{gather}

By the moment estimates of $\varrho$, $\bar{K}(0, t)$ is bounded by the first moment of $\varrho$
and it is Lipschitz continuous with uniform Lipschitz constant $L$.
We consider the time continuity of $\bar{K}$.

\begin{lemma}\label{lmm:increamentrandom}
Under Assumptions \ref{ass:kernelfunctions}, \ref{ass:newrho0}, \ref{ass:polynomialgrowth}, we have for any $\Delta t\in [0, \tau]$,
\begin{gather}
\left|\bar{K}(x,t+\Delta t)-\bar{K}(x, t) \right|
\le C(M_q(\varrho(t)))(1+|x|^q)\tau,
\end{gather}
for some $q>1$, where $M_q(\varrho(t))$ means the $q$-moment of $\varrho$ at $t$. Moreover, if Assumption \ref{ass:kernelfunctions} is replaced by Assumption \ref{ass:kernelstrong}, $C(M_q(\varrho(t)))$ has an upper bound independent of time $t$.
\end{lemma}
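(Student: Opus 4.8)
The plan is to write the increment of $\bar K$ as the time integral of $\partial_t\bar K$ and to estimate $\partial_t\bar K$ through the Fokker--Planck equation satisfied by $\varrho$. By Proposition \ref{pro:nonlinearfpestimates}, $\varrho$ is a smooth strong solution with finite moments of every order, so for each component $K^m$ of $K$ and each fixed $x$ one may test the equation against $\varphi(y)=K^m(x-y)$. Although this $\varphi$ is not compactly supported, $\nabla K^m$ is bounded (because $K$ is Lipschitz) and $\Delta K^m$ has polynomial growth (Assumption \ref{ass:polynomialgrowth}), so a cutoff argument with $\chi_R\uparrow 1$ --- whose error terms involve $R^{-1}\nabla\chi_R$ and $R^{-2}\Delta\chi_R$ paired against polynomially growing quantities times $\varrho$, hence vanishing as $R\to\infty$ by the tail estimates coming from the finite moments of $\varrho$ --- yields
\begin{gather*}
\frac{d}{ds}\,\bar K^m(x,s)=-\int_{\R^d}\nabla K^m(x-y)\cdot\big(b(y)+\bar K(y,s)\big)\varrho(y,s)\,dy+\sigma^2\int_{\R^d}\Delta K^m(x-y)\,\varrho(y,s)\,dy .
\end{gather*}

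Next I would bound the right-hand side pointwise in $s$. In the first integral $|\nabla K^m(x-y)|\le L$, while $|b(y)|\le C(1+|y|^q)$ by hypothesis and $|\bar K(y,s)|\le |K(0)|+L\,M_1(\varrho(s))+L|y|$ since $|\bar K(0,s)|$ is controlled by the first moment of $\varrho(s)$; integrating in $y$ therefore produces a bound depending polynomially on the moments of $\varrho(s)$ with no $x$-dependence. In the second integral $|\Delta K^m(x-y)|\le C(1+|x-y|^{q'})\le C(1+|x|^{q'})(1+|y|^{q'})$, so after integration it is bounded by $C(1+|x|^{q'})(1+M_{q'}(\varrho(s)))$. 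Taking $q$ large enough, one concludes $|\partial_s\bar K^m(x,s)|\le C(1+|x|^q)\,\Phi\big(M_q(\varrho(s))\big)$ for a polynomial $\Phi$.

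Finally I would integrate over $s\in[t,t+\Delta t]$ with $\Delta t\le\tau$. Since the nonlinear Fokker--Planck flow is autonomous, $\varrho(\cdot,t+\cdot)$ solves the same equation with initial data $\varrho(\cdot,t)$, so the short-time moment estimate of Proposition \ref{pro:nonlinearfpestimates} gives $\sup_{s\in[t,t+\tau]}M_q(\varrho(s))\le C\,M_q(\varrho(t))$, with $C$ independent of $t$ under Assumption \ref{ass:kernelstrong}. Hence $|\bar K^m(x,t+\Delta t)-\bar K^m(x,t)|\le C(M_q(\varrho(t)))(1+|x|^q)\tau$, and summing over the $d$ components of $K$ finishes the proof. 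The main obstacle is the very first step --- rigorously justifying the use of the non-compactly-supported test function $K^m(x-\cdot)$ --- which is precisely why Proposition \ref{pro:nonlinearfpestimates}'s smoothness and all-order moment bounds are needed rather than just the weak formulation; everything afterwards is a direct moment computation.
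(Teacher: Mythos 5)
Your argument is essentially the same as the paper's: differentiate $\bar K(x,\cdot)$ in time by inserting the Fokker--Planck equation (equivalently, testing against $K^m(x-\cdot)$), integrate by parts to put derivatives on $K$, bound the resulting terms using the Lipschitz bound on $\nabla K$, the polynomial growth of $b$ and $\Delta K$, and the moment estimates from Proposition \ref{pro:nonlinearfpestimates}, and then integrate over $[t,t+\Delta t]$ with the short-time moment propagation. Your cutoff discussion to justify the non-compactly-supported test function is a worthwhile extra bit of rigor that the paper leaves implicit, but it does not change the approach.
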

\begin{proof}
It can be computed directly that
\[
\begin{split}
\partial_t\bar{K}(x,t)
&=\int_{\R^d} K(x-y)\{-\nabla\cdot[(b(y)+K*\varrho)\varrho]+\sigma^2\Delta_y\varrho\}dy\\
&=-\int_{\R^d} (b(y)+K*\varrho)\varrho\cdot (\nabla K)(x-y)\,dy+\int_{\R^d} \sigma^2(\Delta K)(x-y)\varrho\,dy. 
\end{split}
\]
Since $b$ has polynomial growth and $\nabla K$ is bounded, then
\[
\begin{split}
&\left|-\int_{\R^d} (b(y)+K*\varrho)\varrho\cdot (\nabla K)(x-y)\,dy\right|\\
&\le C\int_{\R^d} (1+|y|^q)\varrho\,dy+C\iint_{\R^d\times\R^d} |K(x-y)|\varrho(x)\varrho(y)\,dxdy.
\end{split}
\]
This is controlled by the moments of $\varrho$.

Moreover, since $\Delta K$ has polynomial growth, 
\[
\begin{split}
\left| \int_{\R^d} \sigma^2(\Delta K)(x-y)\varrho\,dy \right|
&\le \sigma^2 C\int_{\R^d} (1+|x-y|^q)\varrho\,dy \\
&\le C\left(1+\int_{\R^d} (|x|^q+|y|^q)\varrho\,dy\right)\le C(1+|x|^q),
\end{split}
\]
where $C$ depends on the moments of $\varrho$.

Using the results in Proposition \ref{pro:nonlinearfpestimates}, the moments on $[t, t+\Delta t]$ can be controlled by the one at $t$. Since $\tau$ is a fixed small number, we omit the dependence in $\tau$ for the amplification constant,  the claims then follow.
\end{proof}

Before further discussion, we first establish some auxilliaury results regarding the following linear Fokker-Planck equation
\begin{gather}\label{eq:linearfp}
\partial_tf=-\nabla\cdot(b_1(x, t)f)\,dt+\sigma^2\Delta f=:\cL_{b_1}^*(f).
\end{gather}
We will assume $b_1(x,t)$ satisfies
\begin{gather}\label{eq:b1cond1}
(x-y)\cdot(b_1(x,t)-b(y,t))\le \beta_1|x-y|^2.
\end{gather}
We say $b_1$ satisfies the strong confinement condition if
$\beta_1<0$. We also denote $S_{s,t}$ the solution operator from time $s$ to time $t$: 
\begin{gather}
f_t=:S_{s,t}f_s.
\end{gather}

There are many classical results on the parabolic equation \eqref{eq:linearfp} with bounded drifts $b_1$ or drifts with linear growth (see, for example, \cite{ladyvzenskaja1988linear}). However, the results for drifts with polynomial growth seem limited.  Below, we will show some results, especially the properties of the fundamental solutions, for drifts with polynomial growth (see Lemma \ref{lmm:fundamentalmoments} and Proposition \ref{pro:derivativemoments}) to fulfill our needs.

\begin{lemma}\label{lmm:momentevol}
Consider equation \eqref{eq:linearfp}, where $b_1$ satisfies \eqref{eq:b1cond1}. Also, assume the derivatives of $b_1(x,t)$ have polynomial growth and $\sup_{t\ge0 }|b(0, t)|<\infty$.   
Then, for $q\ge 1$,
\begin{enumerate}[(i)]
\item  For any $g \in L^1(\R^n)$, one has
\[
\sup_{ \Delta t \le T}\int_{\R^d}(1+ |x|^q) |S_{t, t+\Delta t}g|\,dx\le C(T)\int_{\R^d} (1+|x|^q)|g(x)|\,dx.
\]

\item 
If $b_1$ satisfies the strong confinement condition $\beta_1<0$, $C(T)$ in item (i) can be made independent of $T$. Moreover, when $\sigma>0$ and $\int_{\R^d} g\,dx=0$, $\beta_1<0$ implies that
\begin{gather*}
\int_{\R^d}  (1+|x|^q) |S_{t,t+\Delta t}g|\,dx\le P(M_{q_1}(|g|))e^{-\delta \Delta t},
\end{gather*}
where $\delta>0$ is independent of $g$, $P(\cdot)$ is some polynomial,  $q_1>q$ is some suitable number, and $M_{q_1}(|g|)$ means the $q_1$-moment of $|g|$.

\item In the case $b_1$ does not depend on time so that $S_{s,t}=e^{(t-s)\cL_{b_1}^*}$, one also has
\[
\sup_{\Delta t\le T}\int_{\R^d}(1+|x|^q) |(\cL_{b_1}^*)^m S_{t,t+\Delta t}g|\,dx\le C(T)\int_{\R^d} (1+|x|^q)|(\cL_{b_1}^*)^m g|\,dx.
\]
\end{enumerate}
\end{lemma}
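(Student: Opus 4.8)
The plan is to prove all three items via weighted $L^1$ estimates for the linear Fokker--Planck equation, treating the weight $w(x) = 1 + |x|^q$ and tracking the evolution of $\int w\,|f|\,dx$ along the flow. The key technical device is to write an equation for $|f|$ (or a smooth approximation $\sqrt{f^2 + \delta}$) and integrate against the weight. For item (i), I would start from the self-adjoint form: test \eqref{eq:linearfp} against $w(x)\,\mathrm{sgn}(f)$ (rigorously, against $w(x)\, f/\sqrt{f^2+\delta}$, then send $\delta \to 0^+$). The transport term contributes $\int b_1 \cdot \nabla w \, |f|\,dx$ after integration by parts, and since $\nabla w = q|x|^{q-2}x$ and $b_1$ has at most the growth allowed by \eqref{eq:b1cond1} with $\sup_t|b_1(0,t)| < \infty$ (so $|b_1(x,t)| \le \beta_1' |x| + C$ in the sense relevant to the dot product with $x$, using one-sided Lipschitz-type control), we get $\int b_1 \cdot \nabla w\, |f|\,dx \le C_1 \int w\,|f|\,dx + C_2$. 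The diffusion term, after the same integration by parts, gives $\sigma^2 \int \Delta w \, |f|\,dx$, and $|\Delta w| \le C(1 + |x|^{q-2}) \le C(1+|x|^q)$, which is again absorbed into $C\int w\,|f|\,dx + C$. Gr\"onwall's inequality on $\phi(t) := \int w\,|f_t|\,dx$ then yields item (i) with $C(T) = e^{CT}$, and the conservation of the $L^1$ norm (or its non-increase for $|f|$) handles the additive constant.

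For item (ii), when $\beta_1 < 0$ the constant $C_1$ above can be taken strictly negative after choosing the weight exponent and estimating more carefully: the dominant term $\int \nabla w \cdot b_1 |f|\,dx$ becomes $\approx q\beta_1 \int |x|^q |f|\,dx$, which is coercive, so $\phi'(t) \le -\delta_0 \phi(t) + C$, giving a uniform-in-$T$ bound. For the decay statement with $\int g\,dx = 0$ and $\sigma > 0$, the key extra ingredient is that the zero-mass condition is preserved by the flow, so $S_{t,t+\Delta t}g$ has zero mass for all $\Delta t$; then I would bootstrap: the uniform moment bound from the first part of (ii) controls $M_{q_1}(|S_{t,t+\Delta t}g|)$ by $P(M_{q_1}(|g|))$, and the coercive differential inequality $\phi'(t) \le -\delta\phi(t)$ (now with no additive constant, since the constant term came from $|b_1(0,t)|$ acting on the $L^1$ mass --- but here we can absorb it by interpolating against the higher moment $q_1 > q$ which stays bounded) closes to give exponential decay $\phi(t+\Delta t) \le \phi(t) e^{-\delta \Delta t} \le P(M_{q_1}(|g|)) e^{-\delta\Delta t}$. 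The role of $q_1 > q$ is precisely to give the slack needed to kill the inhomogeneous term via an interpolation $\int |x|^{q-1}|f|\,dx \le \varepsilon \int |x|^q |f|\,dx + C_\varepsilon \int |f|\,dx$ combined with $\int|f|\,dx$ being controlled (in the zero-mass case one uses $\|f\|_{L^1} \le$ something decaying, or more simply the $q_1$-moment bound with Markov's inequality).

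For item (iii), when $b_1$ is time-independent the operator $\cL_{b_1}^*$ commutes with its own semigroup $S_{t,t+\Delta t} = e^{\Delta t\, \cL_{b_1}^*}$, so $(\cL_{b_1}^*)^m S_{t,t+\Delta t} g = S_{t,t+\Delta t}\big((\cL_{b_1}^*)^m g\big)$; applying item (i) with $g$ replaced by $(\cL_{b_1}^*)^m g$ gives the claim immediately. The main obstacle I anticipate is the rigor of the $|f|$-equation manipulation when $b_1$ has only polynomial growth: one must justify the integration by parts and the passage $\delta \to 0^+$ with a sufficiently fast-decaying cutoff in $x$, and ensure that $f$ itself (a fundamental-solution-type object, by the intended application) has enough decay and regularity for these computations --- this is where one leans on the smoothing of the parabolic equation and on the a priori moment control already being propagated. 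A secondary subtlety in (ii) is getting the additive constant to vanish in the zero-mass decay estimate, which as indicated requires the interpolation trick exploiting $q_1 > q$ rather than a naive Gr\"onwall.
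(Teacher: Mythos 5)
Your treatments of items (i) and (iii) are essentially sound and are close in spirit to the paper's: the paper decomposes $g = g^+ - g^-$ and uses the positivity-preserving property of the Fokker--Planck flow together with moment estimates (as in Step~1 of the appendix), while you work directly with $|f|$ via a smooth approximation; both amount to the same weighted $L^1$/Gr\"onwall argument, and for (iii) both use that $\cL_{b_1}^*$ commutes with its own semigroup when $b_1$ is autonomous.

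The second half of item (ii) --- exponential decay of $\int(1+|x|^q)|S_{t,t+\Delta t}g|\,dx$ when $\int g = 0$ --- is where there is a genuine gap. Your plan is to run the Gr\"onwall argument and kill the additive constant by interpolating against the higher moment $M_{q_1}$, arguing that ``$\|f\|_{L^1}$ is controlled'' by Markov's inequality. But Markov's inequality only controls the mass in the far field $\{|x|>R\}$; it gives no bound on, let alone decay of, $\int_{|x|\le R}|f|\,dx$. In fact $\|S_{t,t+\Delta t}g\|_{L^1}$ does not decay as a consequence of moment estimates alone: moment control is the Lyapunov ingredient, but by itself it cannot distinguish the zero-mass case from the general case. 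The mechanism that actually produces decay here is ergodicity of the associated Markov semigroup, which is why the hypothesis $\sigma>0$ appears in the statement and is unused in your argument --- your Gr\"onwall computation would apply just as well to $\sigma=0$, for which (as the paper remarks right after the lemma) the conclusion can fail. The paper's route is to normalize $g^{\pm}$ into probability measures $\mu^{\pm}$, invoke a Harris-type geometric ergodicity result (Lyapunov function from $\beta_1<0$ plus a minorization/Doeblin condition that requires $\sigma>0$) to get $\|\mu^+-\mu^-\|_{TV}\le P_1(M_{q_1})e^{-\delta'\Delta t}$, and then recover the weighted $L^1$ decay from TV decay via Cauchy--Schwarz against the (uniformly bounded) $2q$-moment. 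Without some version of that coupling/ergodicity step, the zero-mass decay in (ii) cannot be closed.
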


\begin{proof}
For (i), one decomposes $g=: g^+-g^-$ where $g^+=\max(g, 0)$ and $g^-=-\min(g, 0)$. 
Then, $S_{t, t+\Delta t}g=(S_{t, t+\Delta t}g^+)-(S_{t, t+\Delta t}g^-)$ with each 
of them being nonnegative. The operator $S_{t,t+\Delta t}$ is $L^1$-contraction, so we focus on the $q$-moments only. Following similar approaches of Step 1 in Appendix \ref{app:proofofnonlinearFP}, one can show that the moments of $S_{t,t+\Delta t}g^{\pm}$ can be controlled by those of $g^{\pm}$. Hence, the moments of $S_{t,t+\Delta t}g$ have the desired estimates. We skip the details.

Regarding (ii), we first note that the $q$ moments
of $S_{t, t_1}g$ can be uniformly controlled by moments of $|g|$, due to similar reasons. Then, one can consider the measures  $\mu^{\pm}:=\frac{1}{\|g^{\pm}\|_{L^1}}S_{t,t+\Delta t}g^{\pm}$. Using standard techniques of Markov chains (see \cite[Appendix A]{mattingly2020ergodicity} and \cite[Chapters 15-16]{MR2509253} ), one can show that
\[
\|\mu^+-\mu^-\|_{TV}\le P_1(M_{q_1}(|\mu|))e^{-\delta' \Delta t}
\Rightarrow \|S_{t,t+\Delta t}g\|_{TV}\le P(M_{q_1}(|g|))e^{-\delta' \Delta t},
\]
for some $q_1>q$ and polynomials $P_1(\cdot)$, $P(\cdot)$.
Then 
\[
\begin{split}
\int_{\R^d} |x|^q |S_{t, t+\Delta t}g|(dx)
&= \frac{1}{2}\|g\|_{L^1}\int |x|^{q} |\mu^+-\mu^-|(dx) \\
&\le C\|g\|_{L^1}\sqrt{\int |x|^{2q}|\mu^+-\mu^-|(dx)}\sqrt{\|\mu^+-\mu^-\|_{TV}}\\
&= C\sqrt{\int_{\R^d} |x|^{2q}|S_{t,t+\Delta t}g|(dx)}\sqrt{\|S_{t,t+\Delta t}g\|_{TV}}.
\end{split}
\]
Further, due to 
\[
\sqrt{\int_{\R^d} |x|^{2q}|S_{t,t+\Delta t}g|(dx)}\le \frac{1}{2}(1+M_{2q}(|g|)),
\]
one can then choose another $q_1$ large enough such that claims in (ii) hold.

For (iii), we just note that
\[
\partial_t((\cL_{b_1}^*)^mf)=\cL_{b_1}^*((\cL_{b_1}^*)^mf).
\]
Then, we apply the property of $e^{t\cL_{b_1}^*}$ proved in the first part (i).
\end{proof}

\begin{remark}
For (ii), if $\sigma=0$, even if the strong confinement condition is satisfied, 
$\|\mu^+-\mu^-\|_{TV}$ may not decay. However, we believe that when $b_1(x,t)\to b_{\infty}(x)$, then 
\[
\int_{\R^d} |x-x_*|^q |S_{t,t+\Delta t}g|\,dx\le C(M_{q_1}(|g|))e^{-\delta \Delta t}
\]
still holds for the limiting point $x_*$ of the trajectories. We do not explore this in this work.
\end{remark}

It is well-known that the linear equation \eqref{eq:linearfp} has a transition density
$\Phi(x, t; y, s)$ solving \eqref{eq:linearfp} for $t>s$ with initial data
$\Phi(x, s; y, s)=\delta(x-y)$.
Then,
\begin{gather}
(S_{s,t}g)(x)=\int_{\R^d} \Phi(x, t; y, s)g(y)\,dy.
\end{gather}
Hence, the property of $\Phi$ is important.
\begin{lemma}\label{lmm:fundamentalmoments}
Consider equation \eqref{eq:linearfp} with $\sigma>0$, and $b_1$ satisfying \eqref{eq:b1cond1}. Also, assume the derivatives of $b_1(x,t)$ have polynomial growth and $\sup_{t\ge0 }|b_1(0, t)|<\infty$.  Then, for all $0\le s<t\le T$, we have
\begin{gather}
\int_{\R^d}(1+|x|^q)|\nabla_y \Phi(x,t; y,s)|\,dx
\le C(T)P(|y|)(1+(t-s)^{-1/2}),
\end{gather}
for some polynomial $P(\cdot)$. If $\beta_1<0$, 
\begin{gather}
\int_{\R^d}(1+|x|^q)|\nabla_y \Phi(x,t; y,s)|\,dx
\le CP(|y|)(1+(t-s)^{-1/2})e^{-\delta(t-s)}
\end{gather}
for some $\delta>0$.
\end{lemma}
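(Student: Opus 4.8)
The plan is to estimate the weighted $L^1$-norm (in $x$) of $\nabla_y \Phi(x,t;y,s)$ by differentiating the transition density with respect to the \emph{backward} variable $y$ and relating this to solving a linear parabolic equation with a source. First I would recall that for fixed $(y,s)$, the function $x\mapsto \Phi(x,t;y,s)$ solves the forward equation \eqref{eq:linearfp}; but the dependence on $(y,s)$ is governed by the backward (adjoint) Kolmogorov equation. Concretely, write $u(y,s):=\int_{\R^d}(1+|x|^q)\,\varphi(x)\,\Phi(x,t;y,s)\,dx$ for a test function $\varphi$ with $|\varphi|\le 1$; then $u$ solves the backward equation $\partial_s u + b_1(y,s)\cdot\nabla_y u + \sigma^2\Delta_y u = 0$ with terminal data $u(y,t)=(1+|y|^q)\varphi(y)$, and differentiating in $y$ gives an equation for $\nabla_y u$ with an extra drift term $(\nabla b_1)^{T}\nabla_y u$. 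Taking a supremum over admissible $\varphi$ recovers the quantity we want to bound, namely $\int_{\R^d}(1+|x|^q)|\nabla_y\Phi(x,t;y,s)|\,dx = \sup_{|\varphi|\le 1}|\nabla_y u(y,s)|$ (up to a constant from the vector-valued structure).

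The key steps, in order, are: (1) establish via Lemma \ref{lmm:momentevol}(i) that the unweighted and weighted moments of $\Phi(\cdot,t;y,s)$ in $x$ are finite and grow at most like $C(T)P(|y|)$ — this handles the ``value'' part and produces the polynomial $P(|y|)$; (2) obtain the $(t-s)^{-1/2}$ singularity by a standard short-time gradient estimate for the linear parabolic semigroup: using the representation $\nabla_y(S_{s,t}g)$ and a Duhamel/commutator argument, or equivalently the Gaussian-type upper bound for $|\nabla_y\Phi|$ valid because $b_1$ has polynomially bounded derivatives and $\sigma>0$ (uniform parabolicity), one gets $\int_{\R^d}(1+|x|^q)|\nabla_y\Phi(x,t;y,s)|\,dx \le C(T)P(|y|)(t-s)^{-1/2}$ for $t-s$ small, and combine with the bounded-gradient estimate for $t-s\ge 1$ to get the factor $(1+(t-s)^{-1/2})$; (3) for the strong confinement case $\beta_1<0$, insert the exponential decay from Lemma \ref{lmm:momentevol}(ii): split $[s,t]$ into $[s,s+1]$ where the $(t-s)^{-1/2}$ singularity lives and $[s+1,t]$ where one propagates using the decay of the $TV$-distance between normalized positive/negative parts, yielding the $e^{-\delta(t-s)}$ factor while keeping the polynomial-in-$|y|$ prefactor. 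Throughout, the one-sided condition \eqref{eq:b1cond1} controls the growth of moments along the flow, exactly as in the proof of Lemma \ref{lmm:momentevol} and in Appendix \ref{app:proofofnonlinearFP}.

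The main obstacle I expect is step (2): making the $(t-s)^{-1/2}$ gradient bound rigorous when the drift $b_1$ has genuine polynomial growth, since the classical Gaussian fundamental-solution estimates (e.g.\ in \cite{ladyvzenskaja1988linear}) are stated for drifts with at most linear growth. I would handle this by a localization/truncation argument: replace $b_1$ by a cutoff drift $b_1^R$ agreeing with $b_1$ on $|x|\le R$ and having linear growth outside, apply the classical gradient estimate to $\Phi^R$, then use the moment bounds from step (1) together with the one-sided Lipschitz condition to show the tail contributions (the region where $b_1^R\neq b_1$) are negligible, and let $R\to\infty$. An alternative is to differentiate the SDE $dX_s = b_1(X_s,s)\,ds+\sqrt{2}\sigma\,dW_s$ with respect to its initial condition (the first-variation/Bismut–Elworthy–Li formula), which directly produces a $(t-s)^{-1/2}$ factor from integration by parts against the Brownian motion and whose moments are controlled by Gr\"onwall applied to the variation process using $\nabla b_1$'s polynomial growth — this is likely the cleanest route and is the one I would pursue first.
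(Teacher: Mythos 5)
Your proposal takes a genuinely different route from the paper, and is correct in outline. The paper proves this lemma by an explicit parametrix construction: it sets $\Phi_0(x,t;y,s)$ to be the heat kernel centered at the deterministic characteristic $Z(t;y,s)$ of $\dot Z = b_1(Z,t)$, observes that $\Phi_0$ solves the forward equation up to a source $\nabla_x\cdot([b_1(x,t)-b_1(Z,t)]\Phi_0)$, and then controls $\nabla_y u$ for the remainder $u=\Phi-\Phi_0$ via a Duhamel formula and the weighted-$L^1$ semigroup estimates of Lemma~\ref{lmm:momentevol}. The $(t-s)^{-1/2}$ singularity then arises entirely from $\nabla_y\Phi_0 = -(\nabla_x\Phi_0)\,\nabla_y Z$, while the one-sided Lipschitz condition \eqref{eq:b1cond1} controls both $|\nabla_y Z|\le\sqrt{d}\,e^{\beta_1(t-s)}$ and the growth of $|Z|$. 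Your approach instead dualizes $\int (1+|x|^q)|\nabla_y\Phi|\,dx = \sup_{|\varphi|\le 1}|\nabla_y\,\E_{y,s}[(1+|X_t|^q)\varphi(X_t)]|$ and proposes to bound the right side by the Bismut--Elworthy--Li formula, with the $(t-s)^{-1/2}$ coming from the $1/(t-s)$ prefactor multiplied by the $\sqrt{t-s}$ from the It\^o isometry on $\int_s^t (J_s^r)^T dW_r$, the Jacobian $J_s^r$ being controlled a.s.\ by $e^{\beta_1(r-s)}$ again via \eqref{eq:b1cond1}. Both routes are sound.

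Two remarks on your plan. First, BEL for drifts with genuine polynomial growth is not a standard textbook citation even under a one-sided Lipschitz hypothesis; you would need to verify the Malliavin integration-by-parts and the differentiability of the stochastic flow in this setting (or use your proposed cutoff/localization scheme), so the ``cleanest route'' still has a nontrivial technical core comparable in size to the parametrix estimate in the paper. Second, note that BEL alone gives only a $1/(t-s)$ (hence polynomial) decay for large $t-s$ even when $\beta_1<0$; your suggestion to obtain exponential decay by applying BEL on $[s,s+1]$ and then propagating the mean-zero function $\nabla_y\Phi(\cdot,s+1;y,s)$ forward via the weighted total-variation contraction (Lemma~\ref{lmm:momentevol}(ii)) is exactly right — this is in fact the same splitting mechanism the paper uses for the remainder $u$ (it splits $v=\int_s^t S_{\lambda,t}R\,d\lambda$ at the midpoint and invokes Lemma~\ref{lmm:momentevol}(ii) on the first half) — so on this point the two proofs converge. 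The net trade-off: the paper's parametrix argument is self-contained and uses only the semigroup moment estimates it already proves, at the cost of an explicit remainder computation; your duality/BEL argument is conceptually cleaner and avoids the parametrix algebra, at the cost of invoking Malliavin-calculus machinery outside its usual hypotheses.
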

Proof of Lemma \ref{lmm:fundamentalmoments} is tedious, and we defer it 
to Appendix \ref{app:fundmoment}. Below, we aim to consider the moments of the derivatives of $\varrho$. 
Now, we recall the standard multi-index notation used in PDE community:
\begin{gather}\label{eq:multind}
D^{\alpha}:=\prod_{j=1}^d\partial_{j}^{\alpha^j},~~\alpha=(\alpha^1,\cdots, \alpha^d).
\end{gather}
The length of the index is defined as
$|\alpha|:=\sum_{j=1}^d \alpha^j$.

The following proposition is helpful for our estimates later.
\begin{proposition}\label{pro:derivativemoments}
Let Assumptions \ref{ass:kernelfunctions}, \ref{ass:newrho0}, and \ref{ass:polynomialgrowth} hold. Then, for any multi-index $\alpha$, it holds that
\begin{gather}
\sup_{t\le T}\int_{\R^d} (1+|x|^q) |D^{\alpha}\varrho|dx\le C(\alpha, q, T).
\end{gather}
If $\sigma>0$ and Assumption \ref{ass:kernelstrong} holds, then 
\begin{gather}
\sup_{t>0}\int_{\R^d} (1+|x|^q) |D^{\alpha}\varrho|dx<\infty.
\end{gather}
\end{proposition}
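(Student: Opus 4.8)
The plan is to prove the weighted bounds on $D^\alpha\varrho$ by induction on $|\alpha|$, using the representation of $\varrho$ via the fundamental solution $\Phi$ of the \emph{linear} Fokker--Planck equation with drift $b_1(x,t):=b(x)+\bar K(x,t)$, which by Proposition \ref{pro:nonlinearfpestimates} and Lemma \ref{lmm:increamentrandom} is an admissible drift: it satisfies \eqref{eq:b1cond1} with $\beta_1=\beta+2L$ (resp. $\beta_1=-(r-2L)<0$ under Assumption \ref{ass:kernelstrong}), has derivatives of polynomial growth, and $\sup_t|b_1(0,t)|<\infty$ thanks to the moment control on $\varrho$. The base case $|\alpha|=0$ is exactly Proposition \ref{pro:nonlinearfpestimates}. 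For the inductive step, differentiate the equation $\partial_t\varrho=\cL_{b_1}^*\varrho$ in $x$: for a multi-index $\alpha$, $D^\alpha\varrho$ satisfies a linear Fokker--Planck equation with the same principal part plus lower-order terms, namely
\begin{gather}
\partial_t(D^\alpha\varrho)=\cL_{b_1}^*(D^\alpha\varrho)-\sum_{0\neq\gamma\le\alpha}\binom{\alpha}{\gamma}\nabla\cdot\big((D^\gamma b_1)\,D^{\alpha-\gamma}\varrho\big).
\end{gather}
By Duhamel's formula, $D^\alpha\varrho(t)=S_{0,t}(D^\alpha\varrho_0)-\int_0^t S_{s,t}\big[\sum_{0\neq\gamma\le\alpha}\binom{\alpha}{\gamma}\nabla\cdot((D^\gamma b_1)D^{\alpha-\gamma}\varrho(s))\big]\,ds$, and the forcing terms involve only $D^\beta\varrho$ with $|\beta|<|\alpha|$, which are controlled (with weights) by the induction hypothesis, while $D^\gamma b_1$ has polynomial growth.

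The key steps, in order, are: (1) record that $b_1$ is an admissible drift and that $\varrho_0$ (hence $D^\alpha\varrho_0$) has all weighted moments finite by Assumption \ref{ass:newrho0}; (2) set up the Duhamel representation above; (3) handle the divergence inside $S_{s,t}$ by integrating by parts against the fundamental solution, i.e.\ write $S_{s,t}[\nabla\cdot(F)](x)=-\int_{\R^d}\nabla_y\Phi(x,t;y,s)\cdot F(y)\,dy$ with $F=(D^\gamma b_1)D^{\alpha-\gamma}\varrho(s)$, and then estimate $\int(1+|x|^q)|S_{s,t}[\nabla\cdot F]|\,dx\le\int|F(y)|\big(\int(1+|x|^q)|\nabla_y\Phi(x,t;y,s)|\,dx\big)\,dy$; (4) plug in Lemma \ref{lmm:fundamentalmoments}, which gives the bound $C(T)P(|y|)(1+(t-s)^{-1/2})$, so that the $y$-integral becomes $\le C(T)\int(1+|y|^{q'})|D^{\alpha-\gamma}\varrho(s)|\,dy\cdot(1+(t-s)^{-1/2})$ for a larger power $q'$ (absorbing the polynomial growth of $D^\gamma b_1$ and of $P$); (5) invoke the induction hypothesis to bound that integral uniformly for $s\le T$, and observe $\int_0^t(1+(t-s)^{-1/2})\,ds\le t+2\sqrt t<\infty$, which closes the estimate; the homogeneous term $S_{0,t}(D^\alpha\varrho_0)$ is controlled directly by Lemma \ref{lmm:momentevol}(i). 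For the strong-confinement case, one uses instead the exponentially weighted bounds: Lemma \ref{lmm:fundamentalmoments} supplies the extra factor $e^{-\delta(t-s)}$, the induction hypothesis supplies time-uniform control of the forcing moments, and $\int_0^t(1+(t-s)^{-1/2})e^{-\delta(t-s)}\,ds\le C$ uniformly in $t$, while $e^{-\delta t}$ kills the homogeneous term; hence the bound is uniform in $t>0$.

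The main obstacle is the integrable-but-singular kernel $(t-s)^{-1/2}$ coming from differentiating the heat-type fundamental solution once in $y$: one must make sure this singularity is only of order $1/2$ (so that the time integral converges) and that each successive differentiation in the induction reintroduces at most one such factor rather than compounding, which is precisely why the induction is organized so that each $D^\alpha\varrho$ solves a \emph{first-order-perturbed} linear equation (one divergence, handled by a single $\nabla_y\Phi$) rather than a higher-order one. A secondary bookkeeping point is that each step raises the polynomial weight ($q\to q'$) because $D^\gamma b_1$ grows polynomially and $P(|y|)$ appears; since the induction hypothesis is quantified over \emph{all} $q\ge1$, this is harmless, but the statement must be read as: for every $q$ there is a constant, and in the inductive step one applies the hypothesis at the appropriately enlarged exponent. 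Finally, one should note the regularity needed to justify differentiating the equation and the integration by parts against $\Phi$ is already furnished by the smoothness assertion in Proposition \ref{pro:nonlinearfpestimates}.
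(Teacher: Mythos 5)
Your argument is essentially the same as the paper's: write $b_1=b+\bar K$, differentiate the linear Fokker--Planck equation to get a transport--diffusion equation for $D^\alpha\varrho$ with lower-order forcing of the form $\nabla\cdot(f_\beta D^\beta\varrho)$, apply Duhamel, move the divergence onto the fundamental solution to get $\nabla_y\Phi$, and invoke Lemma~\ref{lmm:fundamentalmoments} together with the induction hypothesis to close. Your Leibniz-rule spelling-out of the paper's $f_\beta$ is a presentational choice, not a different method, and your handling of the $(t-s)^{-1/2}$ singularity, the weight escalation $q\to q'$, and the exponential factor in the confining case matches the paper exactly.

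There is one gap: the first assertion of the proposition is stated under Assumptions~\ref{ass:kernelfunctions}, \ref{ass:newrho0}, \ref{ass:polynomialgrowth} only, so it must cover $\sigma=0$, but your entire argument routes through the fundamental solution $\Phi$ and Lemma~\ref{lmm:fundamentalmoments}, which explicitly require $\sigma>0$ (when $\sigma=0$ there is no parabolic smoothing and no $\nabla_y\Phi$ bound of the claimed form). The paper disposes of $\sigma=0$ separately via the characteristic flow $\dot Z=b_1(Z,t)$, $Z(0;y)=y$, pushing forward $\varrho_0$ and controlling $D_y^\alpha Z$ by $e^{\beta_1 t}$ times polynomials in $|y|$; you should add that branch (or restrict your inductive Duhamel argument to $\sigma>0$ and give a transport-equation argument for $\sigma=0$). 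Otherwise the proof is sound.
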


\begin{proof}
We set
\[
b_1(x,t):=b(x)+\bar{K}(x,t),
\]
which is regarded as known (since existence and uniqueness of $\varrho$ have been established).

In the case $\sigma=0$, consider the characteristics satisfying
\[
\dot{Z}=b(Z),~Z(0; y)=y.
\]
Using the one-sided Lipschitz condition in Assumption \ref{ass:kernelfunctions}, one has $v\cdot\nabla b_1(x,t)\cdot v\le \beta_1|v|^2$ for any $v, x$ with $\beta_1=\beta+2L$.
 With this and induction, one can show that
$|\partial_{y_i}Z|\le Ce^{\beta_1 t}$ and $D_y^{\alpha}Z(t; y)$ is controlled by polynomials of $|y|$ for higher order $\alpha$. Using $\varrho=Z_{\#}\varrho_0$, the claim can be proved. We omit the details.

Now, we focus on $\sigma>0$.  We do by induction on the derivatives of $\varrho$.
Let $\ell=|\alpha|$. We know already that the claim holds for $\ell=0$.

Suppose the claim is true for $\ell-1$ with $\ell\ge 1$. Now, we consider $\ell$. 
One can see that
\begin{gather*}
\partial_t D^{\alpha}\varrho=-\nabla\cdot(b_1(x,t) D^{\alpha}\varrho)+\sigma^2\Delta D^{\alpha}\varrho
+\sum_{|\beta|\le \ell-1} C_{\beta}\nabla\cdot[ f_{\beta}(x) D^{\beta}\varrho].
\end{gather*}
Here, $f_{\beta}$ are some functions with polynomial growth.
Then, we have
\[
D^{\alpha}\varrho=S_{0,t}D^{\alpha}\varrho_0
-\int_0^t \sum_{|\beta|\le \ell-1}C_{\beta} \int_{\R^d}\nabla_y\Phi(x,t;y,s)\cdot(f_{\beta}(y) D^{\beta}\varrho(y,s)) \,dy ds.
\]
The claim follows by a direct application of the induction assumption and Lemma \ref{lmm:fundamentalmoments} with $\beta_1=\beta+2L$ or $\beta_1=-r+2L$.
\end{proof}

\subsection{Approximation of the limiting dynamics to the nonlinear Fokker-Planck equation}

To get a feeling how close the dynamics given by $\mathcal{G}_{\infty}$ (the mean field limit of RBM) is to the nonlinear Fokker-Planck equation \eqref{eq:nonlinearFP}, we consider \eqref{eq:firstalgorithm}.
Recall that $\rho^{(p)}(\cdots, t_k)=\tilde{\mu}(\cdot, t_k)^{\otimes p}$, with order $\tau$ error,  \eqref{eq:firstalgorithm} is approximated as
\begin{multline}
\partial_t\rho^{(p)}
=-\sum_{i=1}^p\nabla_{x_i}\cdot\left(\left[b(x_i)+\frac{1}{p-1}\sum_{j: j\neq i}K(x_i-x_j)\right]\prod_{j=1}^p\tilde{\mu}(x_j,t_k)\right)\\
+\sigma^2\sum_{i=1}^p \Delta_{x_i}\rho^{(p)}+O(\tau).
\end{multline}
Since we are curious about how the marginal distribution is evolving, one may take the integrals on $x_2,\cdots, x_p$ and have:
\[
\partial_t\tilde{\rho}
=-\nabla_{x_1}\cdot([b(x_1)+K*\tilde{\mu}(\cdot,t_k)]\tilde{\mu}(x_1, t_k))
+\sigma^2\Delta_{x_1}\tilde{\rho}+O(\tau).
\]
Since $\tilde{\rho}:=\int \rho^{(p)}\,dx_2\cdots dx_p$ is equal to $\tilde{\mu}(\cdot, t_k)$
initially, one finds that this is close to \eqref{eq:nonlinearFP} already. Thus, one expects that the overall error between $\mathcal{G}_{\infty}^k(\varrho_0)$ and $\varrho(k\tau)$ is like $O(\tau)$.

We now state the main results in this section.
\begin{theorem}\label{thm:w2distance}
Let $\varrho$ be the solution to the nonlinear Fokker-Planck equation \eqref{eq:nonFPdensity}.
Suppose Assumptions \ref{ass:kernelfunctions}, \ref{ass:newrho0} and \ref{ass:polynomialgrowth} hold. Then, 
\begin{gather}
\sup_{n: n\tau\le T}W_{1}(\mathcal{G}_{\infty}^n(\varrho_0), \varrho(n\tau))\le C(T)\tau.
\end{gather}

If Assumption \ref{ass:kernelstrong} is assumed in place of Assumption \ref{ass:kernelfunctions} and also $\sigma>0$, then
\begin{gather}
\sup_{n\ge 0}W_{1}(\mathcal{G}_{\infty}^n(\varrho_0), \varrho(n\tau))\le C\tau.
\end{gather}
Consequently, the invariant measures (see Proposition \ref{pro:stabilityW2} and Proposition \ref{pro:nonlinearfpestimates} for the related notations) satisfy
\begin{gather}
W_{1}(\pi_{\tau}, \pi)\le C\tau.
\end{gather}
\end{theorem}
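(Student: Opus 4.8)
The plan is to prove the error bound by a one-step local error estimate combined with the stability/contraction of $\mathcal{G}_{\infty}$ established in Proposition \ref{pro:stabilityW2}, in the standard ``stability + consistency'' pattern for time-stepping schemes. Concretely, I would compare two evolutions on a single time interval $[t_k, t_{k+1}]$: the exact nonlinear Fokker--Planck flow $\mathcal{S}(\tau)$ starting from $\varrho(t_k)$, and the operator $\mathcal{G}_{\infty}$ starting from the same measure $\varrho(t_k)$. Writing $e_n := W_1(\mathcal{G}_{\infty}^n(\varrho_0), \varrho(n\tau))$, the triangle inequality gives
\begin{gather*}
e_{n+1} \le W_1\big(\mathcal{G}_{\infty}(\mathcal{G}_{\infty}^n(\varrho_0)), \mathcal{G}_{\infty}(\varrho(n\tau))\big) + W_1\big(\mathcal{G}_{\infty}(\varrho(n\tau)), \mathcal{S}(\tau)\varrho(n\tau)\big).
\end{gather*}
The first term is bounded by $e^{(\beta+2L)\tau} e_n$ (or $e^{-(r-2L)\tau}e_n$ under Assumption \ref{ass:kernelstrong}) by Proposition \ref{pro:stabilityW2}; the second is the local truncation error, which I will show is $O(\tau^2)$. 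A discrete Gr\"onwall argument over $n \le T/\tau$ steps then accumulates this to $O(\tau)$ globally; in the strongly confining case the geometric factor $e^{-(r-2L)\tau}<1$ makes the sum of local errors a convergent geometric series uniformly in $n$, giving the $\tau$-bound for all $n\ge 0$, and then letting $n\to\infty$ and using that both $\mathcal{G}_{\infty}^n(\varrho_0)\to\pi_\tau$ and $\varrho(n\tau)\to\pi$ in $W_1$ (Proposition \ref{pro:stabilityW2}, Proposition \ref{pro:nonlinearfpestimates}) yields $W_1(\pi_\tau,\pi)\le C\tau$.

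The heart of the matter is the local error estimate $W_1(\mathcal{G}_{\infty}(\mu), \mathcal{S}(\tau)\mu) \le C\tau^2$ for $\mu = \varrho(t_k)$ with controlled moments and derivatives. Here I would couple the two dynamics via the Benamou--Brenier/velocity-field representation \eqref{eq:Wp}: both $\mathcal{G}_{\infty}(\mu)$ and $\mathcal{S}(\tau)\mu$ are obtained from linear (in the case of $\mathcal{G}_{\infty}$, after marginalizing the $p$-particle Fokker--Planck equation \eqref{eq:firstalgorithm}) advection-diffusion equations, and the difference of their marginal evolutions satisfies, in the weak form, an equation whose source is the difference of drift terms. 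For $\mathcal{G}_{\infty}$ the effective drift felt by the first marginal at time $t\in[t_k,t_{k+1}]$ is the conditional expectation of $b(x_1) + \frac{1}{p-1}\sum_{j\ne 1} K(x_1 - x_j)$ given $X^1=x_1$ under the joint law $\rho^{(p)}(t)$ started from the tensor power $\mu^{\otimes p}$; at $t=t_k$ this equals exactly $b(x_1) + (K*\mu)(x_1)$, the FP drift, and its time derivative is bounded (using Lemma \ref{lmm:increamentrandom}-type estimates and the moment/derivative bounds of Lemma \ref{lmm:momentsoflimitingdynamics} and Proposition \ref{pro:derivativemoments}), so the drift discrepancy over $[t_k,t_{k+1}]$ is $O(\tau)$ pointwise with polynomially-growing constants integrable against $\varrho$. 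Integrating the weak-form error equation in time against a $1$-Lipschitz test function and using the fundamental-solution gradient bounds of Lemma \ref{lmm:fundamentalmoments} to absorb the $\nabla\varphi$ pairing gives the extra factor of $\tau$, i.e.\ $O(\tau^2)$ total.

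The main obstacle I anticipate is making the local error genuinely second order rather than merely first order: a naive bound on $W_1(\mathcal{G}_{\infty}(\mu), \mathcal{S}(\tau)\mu)$ using that both drifts agree at $t_k$ only gives that the drift difference is $O(\tau)$ and hence the measure difference $O(\tau^2)$ \emph{provided} one can show the time-integrated drift discrepancy is $O(\tau^2)$ in a weak (Wasserstein-dual) sense, which in turn requires the derivative-moment control of Proposition \ref{pro:derivativemoments} and the time-regularity of $\bar K$ from Lemma \ref{lmm:increamentrandom}, together with the smoothing/gradient estimates on the linear fundamental solution $\Phi$ from Lemma \ref{lmm:fundamentalmoments} to handle the $(t-s)^{-1/2}$ singularity (which is integrable, contributing an extra $\tau^{1/2}\cdot\tau^{1/2}$ rather than a full $\tau$, so care is needed to confirm one still lands at $O(\tau^2)$ and not $O(\tau^{3/2})$). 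A secondary technical point is that the marginalized $\mathcal{G}_{\infty}$ evolution is not itself a closed linear equation during $[t_k,t_{k+1})$, so one must either work at the level of the full $p$-particle system and project, or use the conditional-expectation drift described above; I would take the latter route and control the conditional-expectation drift via the tensorized initial data and the moment estimates, which is where Assumption \ref{ass:polynomialgrowth} on the growth of derivatives of $b$ and $K$ becomes essential.
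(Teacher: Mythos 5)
Your global strategy (stability of $\mathcal{G}_{\infty}$ plus a one-step consistency estimate, summed either by discrete Gr\"onwall on $[0,T]$ or via a geometric series under strong confinement, then passing to the limit for the invariant measures) matches the paper exactly. The issue is in the execution of the one-step bound $W_1(\mathcal{G}_{\infty}(\varrho(t_k)),\varrho(t_{k+1}))\le C\tau^2$, where your proposed route risks falling short of second order in precisely the way you worry about.

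The concern you flag about the $(t-s)^{-1/2}$ singularity is real: if you represent the difference of the one-particle densities via Duhamel against $\nabla_y\Phi$, the time-integral $\int_0^{\tau}(s-t_k)(\tau-s)^{-1/2}\,ds$ only produces $O(\tau^{3/2})$, not $O(\tau^2)$. The paper avoids this entirely. Its one-step estimate (Lemma~\ref{lmm:onesteptononlinearFP}) does not use the fundamental-solution gradient estimates at all; instead it Taylor-expands the two one-step propagators directly,
\[
e^{\tau\hat{\cL}^*}\varrho_k=\varrho_k+\tau\hat{\cL}^*\varrho_k+\int_0^{\tau}(\tau-s)(\hat{\cL}^*)^2e^{s\hat{\cL}^*}\varrho_k\,ds,
\]
and similarly for the marginal of $e^{\tau\bar{\cL}^*}\varrho_k^{\otimes p}$. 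It then verifies that the zeroth- and first-order terms coincide exactly (the operator consistency identity \eqref{eq:operatorconsistency}, which is your observation that the conditional-expectation drift at $t_k$ equals the Fokker--Planck drift), so that the pointwise difference of the two densities is the difference of the two \emph{second-order} Taylor remainders. The moments of these remainders are controlled by $L^1$-stability of the linear semigroups applied to $(\cL^*)^2$-derivatives (Lemma~\ref{lmm:momentevol}~(iii)), with no time singularity, and then Lemma~\ref{lmm:W2byTV} (the total-variation-to-Wasserstein lemma) converts this weighted-$L^1$ bound directly into a $W_1$ bound of size $O(\tau^2)$. The fundamental-solution gradient estimates (Lemma~\ref{lmm:fundamentalmoments}) enter only in Proposition~\ref{pro:derivativemoments}, to establish that the moments of $D^{\alpha}\varrho$ up to $|\alpha|=4$ that appear in the $\tau^2$-coefficient are finite uniformly on $[0,T]$ (or in $t$ under strong confinement). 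The paper also devotes Section~\ref{subsec:cmtlemma43} to explaining why a synchronous SDE coupling cannot deliver second order here: the variance of the random batch force enters the It\^o--Taylor expansion and dominates at $O(\tau)$, so the consistency of expectations is wasted. Your ``conditional-expectation drift at the PDE level'' viewpoint is morally the right way to exploit that consistency, but to make it yield a clean $\tau^2$ you should trade the fundamental-solution-gradient Duhamel formula for the Taylor-expansion-plus-$L^1$-stability-plus-Lemma~\ref{lmm:W2byTV} route; otherwise the loss you anticipated does occur.

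One smaller point: your decomposition compares $\mathcal{G}_{\infty}(\varrho(t_k))$ directly with $\mathcal{S}(\tau)\varrho(t_k)$, while the paper inserts an intermediate linearized step $\tilde{\mathcal{S}}(\varrho_k)$ (the FP flow with the kernel frozen at $t_k$), handling the nonlinearity of $\mathcal{S}$ by an SDE coupling and the difference $\tilde{\mathcal{S}}$ vs.\ $\mathcal{G}_{\infty}$ by the semigroup expansion above. Your one-shot comparison is a reorganization, not an error, but note that this nonlinearity-handling step is what makes the time-regularity estimate of $\bar K$ (Lemma~\ref{lmm:increamentrandom}) necessary, and it must land at $O(\tau^2)$ too, which the SDE coupling handles harmlessly there because the frozen drift is a true function of the state, not a random batch force.
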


Below, we aim to prove Theorem \ref{thm:w2distance}. We first establish the one-step error and then give the global estimate.

Define
\begin{gather}\label{eq:derimoment}
M_{q,\ell }^{(k)} :=\sum_{|\alpha|\le \ell }\int_{\R^d}(1+|x|^q) |D^{\alpha}\varrho(x, t_k))|\,dx,
\end{gather}
which is is the moments of $|D^{\alpha}\varrho(\cdot, t_k)|$ for $|\alpha| \le \ell$ (see \eqref{eq:multind} for the multi-index notation).
In fact, we have the following result provided that $\varrho$ is smooth enough.
\begin{lemma}\label{lmm:onesteptononlinearFP}
Suppose Assumptions \ref{ass:kernelfunctions} and \ref{ass:polynomialgrowth} hold. Let $t_k \le T-\tau$.
Then,
\[
W_1(\mathcal{G}_{\infty}(\varrho(\cdot, t_k)), \varrho(\cdot, t_{k+1}))\le g(M_{q,4}^{(k)})\tau^2,
\]
for some $q>1$ and nondecreasing function $g(\cdot)$, where $M_{q,4}^{(k)}$ is defined in \eqref{eq:derimoment}.
\end{lemma}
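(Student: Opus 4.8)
The goal is a one-step estimate showing that applying $\mathcal{G}_{\infty}$ to $\varrho(\cdot, t_k)$ agrees with the exact nonlinear Fokker--Planck flow up to $O(\tau^2)$ in $W_1$. The strategy is to write both distributions as laws at time $t_{k+1}$ of stochastic processes driven by the \emph{same} Brownian motion, so that the $W_1$ distance is bounded by the $L^1(\bbP)$ norm of the difference of the two trajectories; then one estimates that difference by a Gr\"onwall argument, where the discrepancy in the drifts is quantified using the time-continuity of $\bar K$ from Lemma \ref{lmm:increamentrandom} (to handle the ``frozen coefficient'' error $b(x_i)+\frac{1}{p-1}\sum_{j\ne i}K(x_i-x_j)$ versus the true mean-field drift $b(x_1)+K*\varrho(\cdot,t)$) together with the independence/chaos structure of $\rho^{(p)}(\cdot,t_k)=\tilde\mu(\cdot,t_k)^{\otimes p}$ to control the law-of-large-numbers-type discrepancy $\frac{1}{p-1}\sum_{j\ne 1}K(x_1-Y^j) - K*\varrho(x_1,t_k)$.

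\textbf{Key steps, in order.} First, I set up the coupling: let $(Y^1,\dots,Y^p)$ solve the $p$-particle system \eqref{eq:RBMmeanfieldSDE} on $[t_k,t_{k+1})$ with i.i.d.\ initial data drawn from $\varrho(\cdot,t_k)$, driven by Brownian motions $W^1,\dots,W^p$; let $\bar X$ solve the McKean--Vlasov SDE $d\bar X = (b(\bar X)+\bar K(\bar X,t))\,dt + \sqrt2\sigma\,dW^1$ with $\bar X(t_k)=Y^1(t_k)$ and driven by the \emph{same} $W^1$, so that $\mathscr{L}(\bar X(t_{k+1}))=\varrho(\cdot,t_{k+1})$ and $\mathscr{L}(Y^1(t_{k+1}))=\mathcal{G}_{\infty}(\varrho(\cdot,t_k))$; hence $W_1(\mathcal{G}_{\infty}(\varrho(\cdot,t_k)),\varrho(\cdot,t_{k+1}))\le \E|Y^1(t_{k+1})-\bar X(t_{k+1})|$. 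Second, I write $d(Y^1-\bar X)$ and split the drift difference into three pieces: (a) $b(Y^1)-b(\bar X)$, controlled by the one-sided Lipschitz constant $\beta$; (b) the ``empirical minus mean-field'' term $\frac{1}{p-1}\sum_{j\ne 1}K(Y^1-Y^j)-K*\varrho(Y^1,t_k)$; and (c) the ``frozen time'' term $K*\varrho(Y^1,t_k)-\bar K(\bar X,t)= \bar K(Y^1,t_k)-\bar K(\bar X,t)$, which I further split into $\bar K(Y^1,t_k)-\bar K(Y^1,t)$ (bounded by $C(M_q)\,(1+|Y^1|^q)\tau$ via Lemma \ref{lmm:increamentrandom}) plus $\bar K(Y^1,t)-\bar K(\bar X,t)$ (bounded by $L|Y^1-\bar X|$). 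Third, for term (b) I use that, conditional on $Y^1$ being fixed, the $Y^j$ ($j\ne1$) are \emph{not} independent of $Y^1$ once $t>t_k$, so I instead introduce an auxiliary decoupled comparison: replace $Y^j$ by the solution $\tilde Y^j$ of the mean-field SDE with the same initial data and Brownian motion, and bound $\E|Y^j(t)-\tilde Y^j(t)|$ (an $O(\tau)$-in-time quantity, since they agree at $t_k$ and the drift coefficients differ by $O(1)$ terms which accumulate at rate $O(t-t_k)$), while for the truly decoupled version the expectation of the $K$-sum minus $K*\varrho$ vanishes in conditional mean because the $\tilde Y^j$ are i.i.d.\ copies with law $\varrho$ at $t_k$ but drift away --- more carefully, one expands $\E\big|\frac{1}{p-1}\sum_{j\ne1}K(\bar X - \tilde Y^j(t_k)) - K*\varrho(\bar X,t_k)\big|$, which is $O(1)$ pointwise but whose contribution, after integrating the SDE over a window of length $\tau$, is $O(\tau)$ — and combined with the factor $\tau$ from the integration window this yields the desired $O(\tau^2)$; the moments $M_{q,4}^{(k)}$ enter because $b$ and the derivatives of $K$ have polynomial growth (Assumption \ref{ass:polynomialgrowth}), so bounding all these terms requires controlling $\int (1+|x|^q)|D^\alpha\varrho|\,dx$ up to $|\alpha|\le 4$, available from Proposition \ref{pro:derivativemoments}. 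Fourth, collecting all three terms, $u(t):=\E|Y^1(t)-\bar X(t)|$ satisfies $u'(t)\le (\beta+2L)u(t) + g(M_{q,4}^{(k)})\tau$ with $u(t_k)=0$, so Gr\"onwall on the interval of length $\tau$ gives $u(t_{k+1})\le g(M_{q,4}^{(k)})\tau^2$, as claimed.

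\textbf{Main obstacle.} The delicate point is term (b): quantifying the discrepancy between the empirical average $\frac{1}{p-1}\sum_{j\ne1}K(Y^1-Y^j)$ and the convolution $K*\varrho(Y^1,t_k)$ when $p$ is \emph{fixed} and not large. This is \emph{not} a law-of-large-numbers gain — with $p$ fixed the empirical average does not converge to the mean — so the only source of smallness is the time window: at $t=t_k$ the $Y^j$ are i.i.d.\ with law $\varrho(\cdot,t_k)$, so the integral $\int_{t_k}^{t_{k+1}}[\text{drift discrepancy}]\,dt$ is, to leading order, $\tau$ times a term which at $t_k$ has conditional mean exactly $K*\varrho(Y^1,t_k)$ (for the decoupled version) — i.e.\ the ``error'' is genuinely an expansion in the window length and one must track carefully that no $O(\tau)$-in-trajectory term survives after taking the expectation that defines $W_1$. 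I expect this bookkeeping — isolating which pieces are mean-zero-at-$t_k$ versus genuinely $O(\tau)$, and keeping the polynomial-growth moment constants $g(M_{q,4}^{(k)})$ under control through It\^o's formula applied to $|Y^j-\tilde Y^j|$ and to $|Y^1-\bar X|$ — to be the bulk of the work; the rest is routine Gr\"onwall and moment bounds.
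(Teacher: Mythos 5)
Your approach --- synchronous coupling with a Gr\"onwall argument --- cannot deliver the $O(\tau^2)$ one-step bound, and the paper devotes Section~\ref{subsec:cmtlemma43} to explaining precisely why. The gap is in your handling of term~(b): your Gr\"onwall input $u'(t)\le(\beta+2L)u(t)+g(M_{q,4}^{(k)})\tau$ requires the forcing $\E\bigl|\tfrac{1}{p-1}\sum_{j\ne1}K(Y^1-Y^j)-K*\varrho(Y^1,t_k)\bigr|$ to be $O(\tau)$, but it is $O(1)$ uniformly on $[t_k,t_{k+1}]$. The conditional \emph{mean} (given $Y^1$) vanishes at $t_k$ by the consistency \eqref{eq:randomforceconsistency}, but the variance of $\tfrac{1}{p-1}\sum_j K(\,\cdot\,-Y^j)$ is $O(1)$ for fixed $p$, so the absolute expectation is $O(1)$, and Gr\"onwall gives only $u(t_{k+1})=O(\tau)$ --- one order short. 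You propose to recover the extra $\tau$ from the mean-zero property at $t_k$, but the coupling bound for $W_1$ is $\E|Y^1(t_{k+1})-\bar X(t_{k+1})|$, and the absolute value destroys exactly that cancellation: the leading trajectory error $\int_{t_k}^{t_{k+1}}[\text{drift discrepancy}]\,ds$ is genuinely $O(\tau)$ in $L^1(\bbP)$, not $O(\tau^2)$. The paper carries out this computation explicitly in the $W_2$ setting and concludes that ``the consistency \eqref{eq:randomforceconsistency} brings no benefit for this mean square error''; the same obstruction defeats your $W_1$ argument.

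The paper escapes by moving the comparison from trajectories to densities. It splits the estimate into (i) a synchronous-coupling bound comparing $\varrho(\cdot,t_{k+1})$ to the frozen-coefficient flow $\tilde{\mathcal S}(\varrho_k)=e^{\tau\hat{\cL}^*}\varrho_k$ --- this part \emph{is} a Gr\"onwall argument and works, because the only error there is the $O(\tau)$ time-continuity of $\bar K$ from Lemma~\ref{lmm:increamentrandom} --- and (ii) a density-level comparison of $\tilde{\mathcal S}(\varrho_k)$ with $\mathcal G_\infty(\varrho_k)$. For (ii), one Taylor-expands both $e^{\tau\hat{\cL}^*}\varrho_k$ and $\int_{(\R^d)^{p-1}}e^{\tau\bar{\cL}^*}\varrho_k^{\otimes p}\,dx_2\cdots dx_p$ in powers of $\tau$; the operator consistency \eqref{eq:operatorconsistency} makes the zeroth- and first-order terms cancel exactly, leaving a pointwise $O(\tau^2)$ remainder whose moments are controlled by $M_{q,4}^{(k)}$ via Lemma~\ref{lmm:momentevol}, and Lemma~\ref{lmm:W2byTV} converts this total-variation-type bound into a $W_1$ bound. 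It is at the level of laws, not trajectories, that the consistency can be exploited --- this is the idea your proposal is missing, and part~(ii) of your argument must be replaced by the density-level expansion.
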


\begin{proof}

For the notational convenience in this proof, we denote, only in this proof,
\[
\varrho_k(\cdot)\equiv \varrho(\cdot, t_k).
\]

{\bf Step 1--}   Consider the SDE corresponding to the nonlinear Fokker-Planck equation \eqref{eq:nonFPdensity}:
\begin{gather*}
dX=[b(X)+K(\cdot)*\varrho(\cdot,t)(X)]\,dt+\sqrt{2}\sigma dW.
\end{gather*}
Denote $\bar{K}(X):=\int_{\R^d} K(X-z)\varrho_k(z)\,dz$,
then we have
\begin{gather}
dX=[b(X)+\bar{K}(X)+R]\,dt+\sqrt{2}\sigma\, dW,
\end{gather}
where, by a similary calculation as in the proof of Lemma \ref{lmm:increamentrandom},
\[
|R|\le C(M_{q_1,0})(1+|X(t_k)|^q)\tau,
\]
for some $q_1>1$. In fact, $C$ depends on the moments of $\varrho(\cdot, t)$
for $t\in [t_k, t_{k+1}]$, which can be controlled by the ones at $t_k$.

We show that the law of $X$ is close in $W_1$ to the law generated by the following SDE:
\begin{gather}\label{eq:meanfieldSDE1}
d\hat{X}=[b(\hat{X})+\bar{K}(\hat{X})]\,dt+\sqrt{2}\sigma\, dW.
\end{gather}
To do this, we estimate $\E|X-\hat{X}|$ under the synchronization coupling (i.e., using the same Brownian motion).  In fact, 
\[
\frac{d}{dt}\E|X-\hat{X}|\le C\E|X-\hat{X}|+C\E|R|.
\]
Clearly, $\E|R|\le C(M_{q_2,0})\tau$ for some $q_2>1$.

Denote (recall that $\mathscr{L}$ means the law of a random variable)
\begin{gather}
\tilde{\mathcal{S}}(\varrho_k):=\mathscr{L}(\hat{X}(\tau)).
\end{gather}
Then, applying Gr\"onwall's lemma yields
\[
W_1(\varrho(\cdot, t_{k+1}), \tilde{\mathcal{S}}(\varrho_k))\le C(M_{q,0})\tau^2.
\]

\vskip 0.2 in

{\bf Step 2--} Compare $\tilde{\mathcal{S}}(\varrho_k)$ with $\mathcal{G}_{\infty}(\varrho_k)$.

We compare the law of $\hat{X}$ in \eqref{eq:meanfieldSDE1} (i.e., $\tilde{\mathcal{S}}(\varrho_k)$) with the law of $Y^1$ (i.e., $\mathcal{G}_{\infty}(\varrho_k)$) given by 
\begin{gather}
dY^i=b(Y^i)\,dt+\frac{1}{p-1}\sum_{j=1,j\neq i}^{p}K(Y^i-Y^j)\,dt
+\sqrt{2}\sigma\,dW^i,~~i=1,\cdots, p,
\end{gather}
with the initial data drawn from $\varrho_k^{\otimes p}$.
The main strategy is to use Lemma \ref{lmm:W2byTV}, so we need to estimate the 
difference of these two distributions and control the moments of this difference.

It is clear that $\tilde{\mathcal{S}}(\varrho_k)=e^{\tau \hat{\cL}^*}\varrho_k$,
where $\hat{\cL}^*$ is given by (for $\rho$ in its domain)
\begin{gather}
\begin{split}
\hat{\cL}^*(\rho)(x)&:=-\nabla\cdot\left(\left[b(x)+\int_{\R^d} K(x-x_2)\varrho_k(x_2)dx_2\right] \rho(x)\right)+\sigma^2\Delta_x\rho(x)\\
&=-\int_{\R^d} dx_2 \varrho_k(x_2)[\nabla\cdot([b(x)+K(x-x_2)]\rho(x))+\sigma^2\Delta_x\rho(x)].
\end{split}
\end{gather}

Denote the Fokker-Planck operator for the evolution of $(Y^1, \cdots, Y^p)$ by
\[
\bar{\mathcal{L}}^*:=-\sum_{i=1}^p\nabla_{x_i}\cdot([b(x_i)
+\frac{1}{p-1}\sum_{j: j\neq i}K(x_i-x_j)] \cdot)
+\sigma^2\sum_{i=1}^p \Delta_{x_i}.
\]
Then, the law of $Y^1$ at $\tau$ is given by
\begin{gather}
\mathcal{G}_{\infty}(\varrho_k)=\int_{(\R^d)^{p-1}} e^{\tau \bar{\mathcal{L}}^*}\prod_{i=1}^p\varrho_k(x_i)\,dx_2\cdots dx_p.
\end{gather}

First note 
\begin{gather}
\tilde{S}(\varrho_k)(x)=\varrho_k(x)+\tau\hat{\cL}^*\varrho_k(x)
+\int_0^{\tau}(\tau-s)(\hat{\cL}^*)^2
e^{s\hat{\cL}^*}\varrho_k\,ds,
\end{gather}
while
\begin{gather}\label{eq:localW2G}
\begin{split}
\mathcal{G}_{\infty}(\varrho_k)(x_1)&=\int_{(\R^d)^{p-1}} \prod_{i=1}^p\varrho_k(x_i)\,dx_2\cdots dx_p
+\tau \int_{(\R^d)^{p-1}} \bar{\mathcal{L}}^*\prod_{i=1}^p\varrho_k(x_i)\,dx_2\cdots dx_p\\
&+\int_0^{\tau}(\tau-s)\int_{(\R^d)^{p-1}} (\bar{\mathcal{L}}^*)^2 e^{s\bar{\mathcal{L}}^*}
\prod_{i=1}^p\varrho_k(x_i)\,dx_2\cdots dx_p\, ds.
\end{split}
\end{gather}
The first line of \eqref{eq:localW2G} is reduced to
\begin{multline}\label{eq:operatorconsistency}
\varrho_k(x_1)-\tau \int_{(\R^d)^{p-1}} \nabla_{x_1}\cdot\left( \left[b(x_1)+
\frac{1}{p-1}\sum_{j: j\neq 1}K(x_1-x_j) \right] \prod_{i=1}^p\varrho_k(x_i) \right) dx_2\cdots dx_p\\
+\tau \sigma^2\Delta_{x_1}\varrho_k(x_1)=\varrho_k(x_1)+\tau \hat{\cL}^*\varrho_k(x_1),
\end{multline}
where we used
\begin{multline*}
\int_{(\R^d)^{p-1}} \nabla_{x_1}\cdot\left( \left[b(x_1)+
\frac{1}{p-1}\sum_{j: j\neq 1}K(x_1-x_j) \right] \prod_{i=1}^p\varrho_k(x_i) \right) dx_2\cdots dx_p\\
= \nabla_{x_1}\cdot\left( \left[b(x_1)+
\int_{\R^d} K(x_1-y)\varrho_k(y)dy \right] \varrho_k(x_1) \right).
\end{multline*}

Hence, we find
\begin{multline}
|\tilde{S}(\varrho_k)(x)-\mathcal{G}_{\infty}(\varrho_k)(x)|\le \\
\int_0^{\tau}(\tau-s)\left[|(\hat{\cL}^*)^2
e^{s\hat{\cL}^*}\varrho_k|+
\left|\int_{(\R^d)^{p-1}} (\bar{\mathcal{L}}^*)^2 e^{s\bar{\mathcal{L}}^*}
\Big(\prod_{i=1}^p\varrho_k(x_i)\Big)\,dx_2\cdots dx_p \right| \right]\,ds.
\end{multline}

Now, we will apply Lemma \ref{lmm:W2byTV} for $\sq=1$ with
$\delta=\tau^2$ and $\hat{\mu}=\hat{\rho}\,dx$ with
\[
\hat{\rho}=\frac{1}{\tau^2}\int_0^{\tau}(\tau-s)\left[|(\hat{\cL}^*)^2
e^{s\hat{\cL}^*}\varrho_k|+\left|\int_{(\R^d)^{p-1}} (\bar{\mathcal{L}}^*)^2 e^{s\bar{\mathcal{L}}^*}
\Big(\prod_{i=1}^p\varrho_k(x_i)\Big)\,dx_2\cdots dx_p \right| \right]\,ds.
\]
The moment $M_1$ of $\hat{\mu}$ is controlled by $C(M_{q,4})$ for a constnat $C$ depending on $M_{q,4}$. To see this, we first remark that for $x\in \R^d$, one has
$1+|x|\le 2+|x|^2$. 
Both $\hat{\cL}^*$ and $\bar{\cL}^*$ are constant operators, and then one has by Lemma \ref{lmm:momentevol} (iii) that for some $q>1$,
\[
\int_{\R^d} (2+|x|^2)\hat{\rho}\, dx \le  C M_{q,4}.
\]
To illustrate how this is estimated, we take the second term as an example:
\[
\begin{split}
&\int_{\R^d} (2+|x|^2)\left|\int_{(\R^d)^{p-1}} (\bar{\mathcal{L}}^*)^2 e^{s\bar{\mathcal{L}}^*}
\prod_{i=1}^p\varrho_k(x_i)\,dx_2\cdots dx_p \right| \,dx \\
&\le \int_{(\R^d)^p} (2+|x_1|^2) \left|(\bar{\mathcal{L}}^*)^2 e^{s\bar{\mathcal{L}}^*}
\prod_{i=1}^p\varrho_k(x_i) \right|\,dx_1\cdots dx_p\\
&=\int_{(\R^d)^p} (2+\frac{1}{p}\sum_i |x_i|^2) \left|(\bar{\mathcal{L}}^*)^2 e^{s\bar{\mathcal{L}}^*}
\prod_{i=1}^p\varrho_k(x_i) \right|\,dx_1\cdots dx_p\\
&\le  \int_{(\R^d)^p} (2+\frac{1}{p}\sum_i |x_i|^2) \left|(\bar{\mathcal{L}}^*)^2 
\prod_{i=1}^p\varrho_k(x_i)\right|\,dx_1\cdots dx_p.
\end{split}
\]
This is controlled by $M_{q,4}$. Note that the dependence in $\tau$ for the constant $C(\tau)$ in Lemma \ref{lmm:momentevol} has been omitted since $\tau\lesssim O(1)$.

Lastly, the constants $C(M_{q,0})$ and $C(M_{q,4})$ clearly have an upper bound
$g(M_{q,4})$ with $g$ nondecreasing, defined on $[0,\infty)$.
\end{proof}

With the key one-step estimate established in Lemma \ref{lmm:onesteptononlinearFP} above, we can now finish the proof of Theorem \ref{thm:w2distance}.

\begin{proof}[Proof of Theorem \ref{thm:w2distance}]

By the semigroup property of $\{\mathcal{G}_{\infty}^k\}$, one can find easily that
\[
W_1(\mathcal{G}_{\infty}^n(\varrho_0), \varrho(n\tau))
\le \sum_{m=1}^nW_1\Big(\mathcal{G}_{\infty}^{n-m+1}(\varrho((m-1)\tau))
, \mathcal{G}_{\infty}^{n-m}(\varrho(m\tau)) \Big).
\]

By Proposition \ref{pro:stabilityW2}, under Assumption \ref{ass:kernelfunctions} and Assumption \ref{ass:polynomialgrowth}, one has for $n\tau\le T$ that
\begin{multline*}
\sum_{m=1}^{n}W_1\Big(\mathcal{G}_{\infty}^{n-m+1}(\varrho((m-1)\tau))
, \mathcal{G}_{\infty}^{n-m}(\varrho(m\tau)) \Big) \\
\le \sum_{m=1}^n e^{(\beta+2L)(n-m)\tau}W_1(\mathcal{G}_{\infty}(\varrho((m-1)\tau)), \varrho(m\tau) ).
\end{multline*}
Combining Proposition \ref{pro:derivativemoments} and Lemma \ref{lmm:onesteptononlinearFP}, $W_1(\mathcal{G}_{\infty}(\varrho((m-1)\tau)), \varrho(m\tau) )\le C(T)\tau^2$ and thus the claim follows.

Under Assumption \ref{ass:kernelstrong} and Assumption \ref{ass:polynomialgrowth}, using Proposition \ref{pro:stabilityW2} and Proposition \ref{pro:derivativemoments}, the above estimates can be changed by replacing $\alpha$ with $-(r-2L)$, and $W_1(\mathcal{G}_{\infty}(\varrho((m-1)\tau)), \varrho(m\tau) )$ now is bounded by $C\tau^2$ with $C$ uniform in $T$. Hence, the conclusions follow easily.
\end{proof}

\section{Some helpful discussions}\label{sec:discussion}

In this section, we perform some helpful discussions to deepen the understanding and extend the results to second order interacting particle systems.

\subsection{The mean field limit for \texorpdfstring{$\tau\ll 1$}{Lg}}\label{subsec:limitcommute}

Formally, as $\tau\to 0$, the equation for $Y^1$ in \eqref{eq:RBMmeanfieldSDE} tends to (i.e., the limit for $\lim_{\tau\to 0}\lim_{N\to\infty}$) the SDE
\begin{gather}
dY=b(Y)\,dt+\frac{1}{p-1}\sum_{j=1}^{p-1}K(Y-Y_j)\,dt+\sqrt{2}\sigma\, dW,
\end{gather}
with $Y_j\sim \mathscr{L}(Y)$ being i.i.d., and  $\{Y_j(s_i)\}$'s are independent for different time points $s_i$.
Theorem \ref{thm:w2distance} essentially tells us that the law of this SDE obeys the same nonlinear Fokker-Planck equation \eqref{eq:nonlinearFP}, which was satisfied by the law of the following seemingly different SDE
\begin{gather}
dX=b(X)\,dt+\left(\int_{\R^d} K(X-y)\varrho(y,t)\,dy \right)\,dt+\sqrt{2}\sigma\,dW,
~~~\varrho(x, t)\,dx=\mathscr{L}(X(t)).
\end{gather}
See Fig. \ref{fig:sde} for illustration (compare with Fig. \ref{fig:operator}).

To understand this, we consider a small but fixed $\tau$, and the following SDEs (with the force field frozen at $t_k$):
\begin{gather}\label{eq:SDEfrozen}
\begin{split}
&d\hat{Y}=b(Y)\,dt+\frac{1}{p-1}\sum_{j=1}^{p-1}K(\hat{Y}-Y_{0}^j)\,dt+\sqrt{2}\sigma dW,~~~Y_{0}^j\sim \mathscr{L}(Y(t_k)),\\
&d\hat{X}=b(\hat{X})\,dt+\left(\int_{\R^d} K(\hat{X}-y)\varrho(y,t_k)\,dy\right)\,dt+\sqrt{2}\sigma\,dW.
\end{split}
\end{gather}
The probability density for the former at $t_k+\tau$ is  $\int_{\R^d} dy \varrho(y, t_k)e^{\tau\cL_y^*}\varrho(\cdot, t_k)$,  where
\[
\cL_y^*=-\nabla\cdot\left([b(x)+K(x-y)] \cdot\right)+\sigma^2\Delta_x,
\]
 while the probability density for the latter is  $ e^{\tau\hat{\cL}^*}\varrho(\cdot,t_k)$
with
\[
\hat{\cL}^* =-\nabla\cdot\left( \left[b(x)+\int_{\R^d} K(x-x_2)\varrho(x_2, t_k)dx_2\right] \cdot\right)+\sigma^2\Delta_x=\int_{\R^d} dy \varrho(y, t_k) \cL_y^*.
\]
Clearly, to the leading order, the changing rates of the probability densities are the same.

\begin{figure}
\begin{center}
	\includegraphics[width=\textwidth]{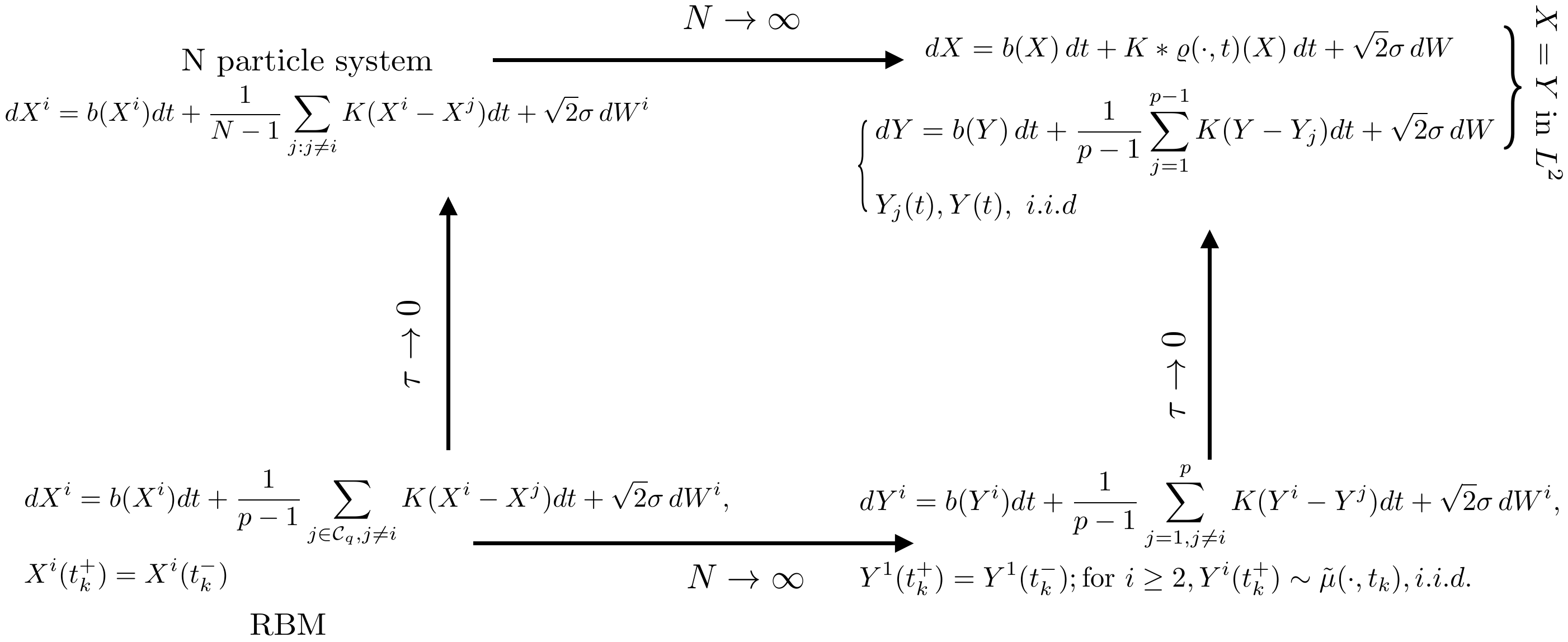}
\end{center}
\caption{Illustration of the various SDEs in different regime.}
\label{fig:sde}
\end{figure}

In  Fig. \ref{fig:sde} we have made a stronger claim that the $X$ and $Y$ processes in the right-upper corner are equal in $L^2$, instead of ``equal in law", if the Brownian motions $W$ used are the same. To see this, one may compute
\[
\frac{d}{dt}\E |X-Y|^2=2\E(X-Y)\cdot(b(X)-b(Y))
+2\E(X-Y)\cdot(K*\varrho(\cdot, t)(X)-\frac{1}{p-1}\sum_{j=1}^{p-1}K(Y-Y_j)).
\]
Since $Y_j(t)$ is independent of $Y(t)$ and $X(t)$, one has
\[
\E(X-Y)\cdot(K*\varrho(\cdot, t)(X)-\frac{1}{p-1}\sum_{j=1}^{p-1}K(Y-Y_j))
=\E(X-Y)\cdot(K*\varrho(\cdot, t)(X)-K*\bar{\varrho}(\cdot,t)(Y)),
\]
where $\bar{\varrho}$ is the law of $Y$. Taking $\tau\to 0$ in Theorem \ref{thm:w2distance}, $\bar{\varrho}=\varrho$. Hence,
one actually has $\frac{d}{dt}\E|X-Y|^2\le 2(\beta+L)\E|X-Y|^2$. Hence, $X=Y$ in $L^2$.

\subsection{Regarding the approximation in Lemma \ref{lmm:onesteptononlinearFP}}\label{subsec:cmtlemma43}

Usually, the Wasserstein distance (especially $W_2$) was estimated using the SDEs. A natural question is therefore whether one can estimate the Wasserstein distance in Lemma \ref{lmm:onesteptononlinearFP} via the SDE approach. 

Below, we illustrate the issue using the $W_2$ distance and the approximating problem \eqref{eq:SDEfrozen} (with the force expressions frozen). Here, we assume the Brownian motions used are the same. The values $Y_0^j$ are i.i.d., drawn from $\varrho(\cdot)$. 

We compute that
\[
\frac{d}{dt}\E|\hat{X}-\hat{Y}|^2
=\E(\hat{X}-\hat{Y})\cdot(b(\hat{X})-b(\hat{Y}))+D,
\]
where
\[
D=\E(\hat{X}-\hat{Y})\cdot\left(\bar{K}(\hat{X})-\frac{1}{p-1}\sum_{j=1}^{p-1}K(\hat{Y}-Y_0^j)\right).
\]
Clearly, for fixed $x$, 
\begin{gather}\label{eq:randomforceconsistency}
\E\frac{1}{p-1}\sum_{j=1}^{p-1}K(x-Y_0^j)
=\bar{K}(x).
\end{gather}
Hence, if $\hat{Y}$ is independent of $Y_0^j$'s, then this term can be controlled as
\[
\E(\hat{X}-\hat{Y})\cdot(\bar{K}(\hat{X})-\bar{K}(\hat{Y}))
\le C\E|\hat{X}-\hat{Y}|^2.
\]
One is thus tempted to believe that even though that $\hat{Y}$ is not independent of $Y_0^j$,
one can do It\^o-Taylor expansion and the extra term is small enough, which can yields the desired error.

Unfortunately, if one is going to do the It\^o-Taylor expansion in $\hat{Y}$, one may find that
$D=O(\tau)$. In fact,
\[
\begin{split}
&(\hat{X}-\hat{Y})\cdot\left(\bar{K}(\hat{X})-\frac{1}{p-1}\sum_{j=1}^{p-1}K(\hat{Y}-Y_0^j)\right)\\
&=\int_0^t\left(\bar{K}(\hat{X}(s))-\frac{1}{p-1}\sum_{j=1}^{p-1}K(\hat{Y}(s)-Y_0^j)\right)
\cdot \left(\bar{K}(\hat{X}(t))-\frac{1}{p-1}\sum_{j=1}^{p-1}K(\hat{Y}(t)-Y_0^j)\right)\,ds.
\end{split}
\]
If we take expectation, the variance of the random force 
$\frac{1}{p-1}\sum_{j=1}^{p-1}K(x-X_0^j)$ appears, which gives $D=O(\tau)$.
Hence, this estimate is not good and the mean square error is only  like $\sqrt{\E|\hat{X}-\hat{Y}|^2}=O(\tau)$. This means that the consistency \eqref{eq:randomforceconsistency} brings no benefit for this mean square error!

Intrinsically, the mean square error above is roughly comparable to 
\[
\int \varrho(z_1)\cdots\varrho(z_j)W_2^2(e^{\tau\hat{\cL}^*}\varrho, e^{t\cL_{z_1,\cdot,z_j}^*}\varrho)
\,dz_1\cdots dz_j.
\]
What we care about is the distance between $e^{\tau\hat{\cL}^*}\varrho$ and $\int \varrho(z_1)\cdots\varrho(z_j) e^{t\cL_{z_1,\cdot,z_j}^*}\varrho\,dz_1\cdots dz_j$.
The former involves the variance introduced by the random force while the latter does not have this issue and uses the consistency \eqref{eq:randomforceconsistency}. This is why we used the total variation norm to obtain the one-step error under $W_1$ distance in Lemma  \ref{lmm:onesteptononlinearFP}.

\subsection{Approximation using weak convergence}

The weak convergence is another popular gauge of the convergence of probability $\mathcal{G}_{\infty}^k(\varrho_0)$ to $\varrho(k\tau)$ \cite{milstein2013stochastic,kloeden2013numerical}.

Pick a test function $\varphi$, using a consistency condition similar to \eqref{eq:operatorconsistency}, it is not very hard to show
\begin{gather}
\left|\int_{\R^d} \varphi(y) \mathcal{S}(\tau)(\mu)(dy)-\int_{\R^d} \varphi(y) \mathcal{G}_{\infty}(\mu)(dy)\right|
\le C\tau^2,
\end{gather}
for any $\mu$, where we recall $\mathcal{S}(t)$ is the evolution operator for \eqref{eq:nonlinearFP} . Hence, the one-step error is easy to control for weak convergence. 
However, the difficulty is to get a certain stability property of the nonlinear dynamics under the weak topology. That means, if two measures are close in the weak topology at some time, then let them evolve under $\mathcal{G}_{\infty}$
for $k$ times, one needs them to be close. Consider
\[
U^n(x):=\int_{\R^d} \varphi(y) \mathcal{G}_{\infty}^n(\delta(y-x))\,dy.
\]
Unlike the linear case (see \cite{feng2018}), it is hard to write $U^n$ as some operator acting on $U^{n-1}$ due to the nonlinearity of $\mathcal{G}_{\infty}$.
Proving the stability of this nonlinear dynamics under weak topology seems challenging, and this is why we chose the Wasserstein metric.

\subsection{A remark for second order systems}
As shown in \cite{jin2020random}, the Random Batch Method applied equally well to second order systems on finite time interval. Repeating the proof here, one can show that similar mean field limit holds for second order systems when $t\in [0, T]$.
 In particular, let us consider the models for swarming and flocking considered in  \cite{albi2013}
 \begin{gather}\label{eq:flocking}
 \begin{split}
& \dot{x}_i=v_i,\\
 & \dot{v}_i=\frac{1}{N}\sum_{j} H_{\alpha}(x_i, x_j, v_i)(v_j-v_i).
 \end{split}
 \end{gather}
 Here, $H_{\alpha}(\cdot,\cdot, \cdot)$ is some function modeling the interactions between particles. The mean field limit of \eqref{eq:flocking} for $t\in [0, T]$ takes the following form (rigorous justification needs some assumptions on $H_{\alpha}$; see \cite{jabin2020review})
\begin{gather}\label{eq:meanfieldkinetic}
\begin{split}
&\partial_t f+\nabla_x\cdot(v f)+\nabla_v\cdot(\xi(f)f)=0,\\
& \xi(f)=\int_{\R^{2d}} H_{\alpha}(x, y, v)(w-v)f(y, w, t)\,dwdy.
\end{split}
\end{gather}
Albi and Pareschi in \cite{albi2013} developed some stochastic binary interaction algorithms for the dynamics. The symmetric Nanbu algorithm (Algorithm 4.3) is like the Random Batch Method when $p=2$ and the Random Batch Method can be viewed as generalization of this Nanbu algorithm. When applying the Random Batch Method
to the particle system and consider $N\gg 1$, the dynamics is expected to be close to the following limiting dynamics:
\begin{algorithm}[H]
\caption{(Mean Field Dynamics of RBM for flocking dynamics \eqref{eq:flocking})}\label{meanfield2nd}
\begin{algorithmic}[1]
\State From $t_k$ to $t_{k+1}$, the distribution $f_k$ will be transformed into $f_{k+1}=\mathcal{Q}_{\infty}(f_k)$ as follows.

\State Let $f^{(p)}(\cdots, t_k)=f(\cdot,\cdot, t_{k})^{\otimes p}$ be a probability measure on $(\R^{2d})^{ p}\cong \R^{2pd}$.

\State Evolve $f^{(p)}$ by time $\tau$ according to the following:
\begin{gather}\label{eq:rbmkinetic}
\begin{split}
&\partial_t f^{(p)}+\sum_{i=1}^p\nabla_{x_i}\cdot(v_i f^{(p)})+
\sum_{i=1}^p \nabla_{v_i}\cdot(\xi_i f^{(p)})=0,\\
& \xi_i=\frac{1}{p-1}\sum_{j:j\neq i} H_{\alpha}(x_i, x_j, v_i)(v_j-v_i).
\end{split}
\end{gather}

\State Set
\begin{gather}
f_{k+1}=\mathcal{Q}_{\infty}(f_k):=\int_{(\R^{2d})^{(p-1)}}
f^{(p)}(\cdot,dy_2,\cdots,dy_p;\cdot, dv_2,\cdots, dv_p; t_{k+1}^-).
\end{gather}
\end{algorithmic}
\end{algorithm}

We expect that this nonlinear operator will approximate the nonlinear kinetic equation 
\eqref{eq:meanfieldkinetic}.
In this sense, we believe the $N\to\infty$ limit of the \cite[Algorithm 4.3]{albi2013} will be an analogue
of the dynamics $\mathcal{Q}_{\infty}$ given in Algorithm \ref{meanfield2nd}.

\section{Conclusions}\label{sec:conclusion}
We first identified and justified in this work the mean field limit of RBM for fixed step size $\tau$. Then, we showed that this mean field limit is close to that of the $N$ particle system, though the chaos arises differently in these two dynamics. The current argument of the mean field limit relies on the fact that two particles are unlikely to be related in RBM when $N\to\infty$ for finite iterations. Hence, this argument cannot given a uniform in $\tau$ bound for the speed of the mean field limit.
It will be an interesting topic to investigate how mixing and chaos can be created in RBM after two particles in a batch are separated, so that one may obtain a convergence speed independent of $\tau$.

\section*{Acknowledgement}
S. Jin was partially supported by the NSFC grant No. 31571071.  The work of L. Li was partially sponsored by NSFC 11901389, 11971314, and Shanghai Sailing Program 19YF1421300. The authors are grateful to Yuanyuan Feng for detecting a mistake in the first version of the manuscript and to Haitao Wang for discussion on fundamental solutions of parabolic equations with unbounded drifts.

\appendix

\section{Proof of Proposition \ref{pro:nonlinearfpestimates}}\label{app:proofofnonlinearFP}

{\bf Step 1--A priori estimates on moments and entropy}

We first perform a priori estimates on the moments.  Fix $q\ge 2$.
\[
\begin{split}
\partial_t\int_{\R^d}|x|^q\varrho\,dx
=\,&\int_{\R^d} |x|^q\{-\nabla\cdot[(b(x)+K*\varrho)\varrho] \}\,dx
+\int_{\R^d}|x|^q \sigma^2 \Delta\varrho\,dx \\
=\,&\int_{\R^d}q|x|^{q-2}x\cdot b(x)\varrho\,dx 
+\iint_{\R^d\times\R^d}q|x|^{q-2}x\cdot K(x-y)\varrho(x)\varrho(y)\,dxdy\\
&~~~+\sigma^2\int_{\R^d} q(q-2+d)|x|^{q-2}\varrho\,dx=:I_1+I_2+I_3.
\end{split}
\]
For $I_2$, one has
\[
\begin{split}
\iint_{\R^d\times\R^d}q|x|^{q-2}x\cdot K(x-y)\varrho(x)\varrho(y)\,dxdy
\le q\iint_{\R^d\times\R^d}|x|^{q-2}x\cdot K(0)\varrho(x)\varrho(y)\,dxdy\\
+qL\iint_{\R^d\times\R^d}|x|^{q-1}(|x|+|y|)\varrho(x)\varrho(y)\,dxdy.
\end{split}
\]
By Young's inequality, 
\[
q\iint_{\R^d\times\R^d}|x|^{q-2}x\cdot K(0)\varrho(x)\varrho(y)\,dxdy
\le \delta \int_{\R^d} |x|^q\varrho\,dx+C(\delta).
\]
Also, Young's inequality implies that $|x|^{q-1}|y|\le \frac{q-1}{q}|x|^q+\frac{1}{q}|y|^q$. Hence,
\[
I_2\le q(2L+\delta)\int_{\R^d}|x|^q\varrho\,dx+C(\delta).
\]

If $q=2$, $I_3$ is a constant. Otherwise if $q>2$, one can use Young's inequality and
\[
I_3\le \delta \int_{\R^d} |x|^q\varrho\,dx+C(\delta).
\]

For $I_1$, under Assumption \ref{ass:kernelfunctions}, one has
\[
\begin{split}
I_1&=\int_{\R^d}q|x|^{q-2}x\cdot (b(x)-b(0))\varrho\,dx
+\int_{\R^d}q|x|^{q-2}x\cdot b(0)\varrho\,dx \\
&\le \beta q \int_{\R^d} |x|^q\varrho\,dx+C\int_{\R^d} |x|^{q-1}\varrho\,dx.
\end{split}
\]
Hence,
\[
I_1+I_2+I_3
\le q(\beta+2L+\delta)\int_{\R^d}|x|^q\varrho\,dx+C(\delta),
\]
where the concrete meaning of $\delta$ and $C(\delta)$ have changed.
Using Gr\"onwall inequality, the moments can be controlled. 

Now, we perform a priori estimates on the entropy.
Multiply $1+\log\varrho$ on both sides and integrate:
\[
\frac{d}{dt}\int_{\R^d}\varrho\log\varrho\,dx=-\int_{\R^d} \varrho(x)\nabla\cdot(b(x)+(K*\varrho)(x))\,dx
-4\sigma^2\int_{\R^d} |\nabla\sqrt{\varrho}|^2\,dx.
\]
By the moment control, the first term is bounded on $[0, T]$. Hence, the entropy  can be controlled.

As a remark, in the case $\sigma=0$, $\varrho$ could be zero at some points. In this case $1+\log\varrho$ is not a good test function. This issue will be explained further in Step 2.

\vskip 0.1 in
{\bf Step 2--Existence in $L^{\infty}(0, T; L^1(\R^d))\cap C([0, T]; \bfP(\R^d))$}

Take a smooth function $\chi\in C_c[0,\infty)$ that is $1$ in $[0, 1]$ and zero on $[2,\infty)$.
Consider the following approximating equation
\[
\begin{split}
&\partial_t\rho_N=-\nabla\cdot(b(x)\chi(x/N)\rho_N)
-\nabla\cdot(\rho_N(K*\rho_N))+\Delta\rho_N,\\
&\varrho_N|_{t=0}=\varrho_0.
\end{split}
\]
Now, $b(x)\chi(x/N)$ and $K$ are Lipschitz functions
and $b(x)\chi(x/N)$ is bounded (compactly supported).
The existence of a smooth solution is clear (see, for example,  Appendix A in \cite{carrillo2003kinetic}).
Performing similar estimates as in Step 1, we have
\[
\sup_{N}\sup_{0\le t\le T}\int_{\R^d} |x|^2\varrho_N\,dx\le C(T)
\]
and
\[
\sup_N\sup_{0\le t\le T}\int_{\R^d} \varrho_N\log\varrho_N\,dx\le C(T).
\]
Note that for the entropy, the zeros of $\varrho_N$ may make $1+\log(\varrho_N)$ an invalid test function. We instead multiply
\[
\frac{\varrho_N}{\varrho_N+\e}+\log(\varrho_N+\e)
\]
as the test function for $\e>0$. Then, the left hand side becomes $\frac{d}{dt}\int\varrho_N\log(\varrho_N+\e)\,dx$ (note that $\e\to \varrho\log(\varrho+\e)$ is non-decreasing so later one can take $\e\to 0$ to get desired entropy control). For the right hand side, we note
\[
\nabla\left[\frac{\varrho_N}{\varrho_N+\e}+\log(\varrho_N+\e)\right]
=\frac{(\varrho_N+2\e)\nabla\varrho_N}{(\varrho_N+\e)^2}.
\]
For the transport term, 
\[
b(x)\chi(\frac{x}{N})\frac{\varrho_N(\varrho_N+2\e)\nabla\varrho_N}{(\varrho_N+\e)^2}
=(b(x)\chi(x/N))\cdot\nabla\varrho_N+\e^2(b(x)\chi(x/N))\cdot\nabla\left(\frac{1}{\varrho_N+\e}\right).
\]
Doing integration by parts and sending $\e\to 0$ first, the second term here will vanish.
Through this way, a prior estimate on the entropy can be justified for this approximating sequence.

The moment estimates imply that $\{\varrho_N\,dx\}$ is tight while the entropy estimates imply that $\{\varrho_N\}$ is uniformly integrable. By Dunford-Pettis theorem, $\varrho_N$ converges weakly to some $\varrho\in L_{loc}^1([0, T]\times \R^d)$ and $\varrho\,dx\in C([0, T]; \bfP(\R^d))$.
Moreover, with the moment control and the uniform integrability
\[
\int_{\R^d} K(x-y)\varrho_N(y)\,dy\to \int_{\R^d} K(x-y)\varrho(y)\,dy
\]
pointwise and actually uniformly on compact sets.
With this, then one can easily verify that $\varrho$ is a desired weak solution, with the corresponding moment control.
This will further imply that $\varrho\in L^{\infty}([0, T]; L^1(\R^d))$.

\vskip 0.2 in

{\bf Step 3--Uniqueness and smoothness of the solution}

We now aim to prove the uniqueness. We divide this step into two sub-steps.

\vskip 0.2 in
{\bf Step 3.1--The weak solution is a strong solution}

 Let $\varrho$ be such a weak solution with 
\[
\sup_{0\le t\le T}\int_{\R^d} |x|\varrho\,dx<C(T).
\] 
Then, $\bar{K}(x,t):=K*\varrho$ is a smooth function (since $K$ is smooth)
and 
\[
|\bar{K}(0)|\le \left|\int_{\R^d} K(x)\varrho(x)\,dx\right| \le |K(0)|+LC(T).
\]
Moreover, it is easy to see that $\bar{K}(x,t)$ is also Lipschitz with the Lipschitz constant bounded by $L$.

We claim that for a given $\varrho$, the solution to 
\[
\begin{split}
&\partial_tu=-\nabla\cdot(b(x)u+\bar{K}(x,t)u)+\sigma^2\Delta  u,\\
&u|_{t=0}=\varrho_0,
\end{split}
\]
is unique and thus must be $\varrho$. In fact, the existence can be justified by the following SDE as its law is a weak solution
\[
dX=(b(X)+\bar{K}(X,t))\,dt+\sqrt{2}\sigma\,dW,~~X_0\sim \varrho_0\,dx.
\]
For the well-posedness of such SDEs, one can refer to \cite[Chap 2, Theorem 3.5]{mao1997}, and also see a recent work with weaker assumptions \cite{trevisan2016well}. Regarding the uniqueness, one considers the difference of two such solutions $u_i, i=1,2$
\[
\partial_t(u_1-u_2)=-\nabla\cdot([b(x)+\bar{K}(x,t)](u_1-u_2))+\sigma^2\Delta  (u_1-u_2).
\]
We then multiply $h_{\e}(u_1-u_2):=h((u_1-u_2)/\e)$ on both sides
and take integral. Here, $h(\cdot)$ is an odd function that increases monotonely
from $-1$ to $1$ on $[-1, 1]$. It is $1$
on $[1,\infty)$. Hence, $h(\cdot/\epsilon)$ is some approximation 
for the sign function.

Then,
\[
\frac{d}{dt}\int_{\R^d} H_{\e}(u_1-u_2)\,dx
\le \int_{\R^d} h'\left(\frac{u_1-u_2}{\e}\right)\frac{u_1-u_2}{\e}(b(x)+\bar{K}(x,t))\cdot \nabla(u_1-u_2)\,dx,
\]
where $H_{\e}(u)=\int_0^u h_{\e}(s)\,ds$. The right hand side goes to zero when $\epsilon\to 0$, because $h'(\frac{u_1-u_2}{\e})\frac{u_1-u_2}{\e}$ is bounded and nonzero only on $|u_1-u_2|\le \e$.
Also, $H_{\e}(u_1-u_2)\to |u_1-u_2|$ as $\e\to 0$. Hence, the claim is shown and thus
\[
u=\varrho.
\]
By the theory of the {\it linear} PDEs, $u=\varrho$ is in fact a strong solution and smooth. For the general theory of linear parabolic equations, one may refer to \cite{friedman2008partial}. 

\vskip 0.2 in
{\bf Step 3.2--The uniqueness of the nonlinear Fokker-Planck equation}

For the uniqueness of the nonlinear Fokker-Planck equation, we cannot use the technique in Step 3.1 as we show uniqueness for the linear PDE, as the term $ K*\varrho$ involves the solution $\varrho$ itself. Also, the classical Dobrushin's estimate \cite{dobrushin1979vlasov, golse2016dynamics} cannot be used because the flow map is not well-defined before we show the uniqueness of $\varrho$.

Instead, we use the interacting particle system for mean-field limit and show that any weak solution is close to the one marginal distribution of the $N$-particle system. This then will result in the uniqueness.

Fix {\it any} weak solution of the nonlinear Fokker-Planck equation. Consider the following SDEs
\begin{gather}\label{eq:appmeanfieldSDE}
dX^i=b(X^i)\,dt+(K*\varrho)(X^i)\,dt+\sqrt{2}\sigma\,dW^i,~i=1,\cdots, N. 
\end{gather}
According to the argument in Step 3.1, the law of each $X^i$
is exactly the weak solution $\varrho$ used to convolve with $K$. Moreover, these $X^i$'s are independent.

Now, consider the interacting praticle system
\begin{gather}\label{eq:appNpart}
dY^i=b(Y^i)\,dt+\frac{1}{N-1}\sum_{j: j\neq i}K(Y^i-Y^j)\,dt
+\sqrt{2}\sigma\,dW^i,~~i=1,\cdots, N.
\end{gather}
The next step is to use the technique in the proof of \cite[Theorem 3.1]{cattiaux2008}.
We compute for fixed $i$,
\begin{multline}
\frac{1}{2}\frac{d}{dt}\E|X^i-Y^i|^2
=\E(X^i-Y^i)\cdot(b(X^i)-b(Y^i)) \\
+\E(X^i-Y^i)\cdot\left(\bar{K}(X^i,t)-\frac{1}{N-1}\sum_{j: j\neq i}K(Y^i-Y^j)\right).
\end{multline}
The first term is controlled by $\beta\E|X^i-Y^i|^2$.
The second term is split as
\[
\begin{split}
&\E(X_i-Y_i)\cdot\left(\bar{K}(X_i,t)-\frac{1}{N-1}\sum_{j: j\neq i}K(Y_i-Y_j)\right)\\
&=\E(X_i-Y_i)\cdot(\bar{K}(X_i,t)-\frac{1}{N-1}\sum_{j: j\neq i}K(X_i-X_j))\\
&+\E(X_i-Y_i)\cdot(\frac{1}{N-1}\sum_{j: j\neq i}K(X_i-X_j)-\frac{1}{N-1}\sum_{j: j\neq i}K(Y_i-Y_j))
=:D_1+D_2.
\end{split}
\]
The term $D_2$ is easily controlled by $2L\E|X_i-Y_i|^2$ by the exchangeability.
For $D_1$, one can control it as
\[
D_1\le \sqrt{\E|X_i-Y_i|^2}\sqrt{\E\left|\bar{K}(X_i,t)-\frac{1}{N-1}\sum_{j: j\neq i}K(X_i-X_j)\right|^2}.
\]
However,
\[
\begin{split}
&\E\left|\bar{K}(X_i,t)-\frac{1}{N-1}\sum_{j: j\neq i}K(X_i-X_j) \right|^2\\
&=\frac{1}{(N-1)^2}\sum_{j,k: j\neq i, k\neq i}\E (\bar{K}(X_i,t)-K(X_i-X_j))(\bar{K}(X_i,t)-K(X_i-X_k)).
\end{split}
\]
By independence, the terms for $j\neq k$ are zero. Hence, only $N-1$ terms will survive. This means 
\[
D_1\le \sqrt{\E|X_i-Y_i|^2}\frac{C_1(T, \varrho)}{\sqrt{N-1}}.
\]
Moreover, $C_1(T, \varrho)$ will have an upper bound that is independent of $T$
if  Assumption \ref{ass:kernelstrong} holds.

By Gr\"onwall's inequality,
\[
\sqrt{\E|X_i-Y_i|^2}\le C(T, \varrho)\frac{1}{\sqrt{N-1}}.
\]
Hence, for any two weak solutions $\varrho_1, \varrho_2$, we have
\[
\sup_{0\le t\le T}W_2(\varrho_1, \varrho_2)\le  [C(T, \varrho_1)+C(T,\varrho_2)]\frac{1}{\sqrt{N-1}}.
\]
Taking $N\to\infty$ yields the uniqueness of the solutions to the nonlinear Fokker-Planck equation.

\vskip 0.1 in

{\bf Step 4--Strong confinement}

Under  Assumption \ref{ass:kernelstrong}, one in fact has
\[
I_1+I_2+I_3
\le q(-r+2L+\delta)\int_{\R^d}|x|^q\varrho\,dx+C(\delta).
\]
The assertions about moments have then been proved with application of Gr\"onwall's inequality.

Under this condition, the estimate of $D_1$ term in Step 3
can also be independent of $T$, because of this uniform moment control. Hence, the mean field limit can be uniform in $T$.

Lastly, to show the convergence of $\varrho$ as $t\to\infty$, we consider two different initial data $\varrho_{j,0}$ where $j=1,2$. Then, one can consider \eqref{eq:appNpart}
with these two initial data. 
Pick the coupling between $Y_1^i(0)$ and $Y_2^i(0)$ (the data for different $i$'s are independent) such that
\[
\E|Y_{1}^i(0)-Y_2^i(0)|^{\sq}\le W_{\sq}^{\sq}(\varrho_{1,0}, \varrho_{2,0})+\epsilon,~\forall i=1,\cdots, N.
\]
Then, by similar computation,
\[
\frac{d}{dt}\E|Y_{1}^i(t)-Y_2^i(t)|^{\sq}\le \sq (-r+2L)\E|Y_{1}^i(t)-Y_2^i(t)|^{\sq}.
\]
Fixing $t>0$ and taking $N\to\infty$, $\mathscr{L}(Y_j^i(t))\to \varrho_j(t), j=1,2$. Hence, the evolutional nonlinear semigroup for the nonlinear Fokker-Planck equation is a contraction
\[
W_{\sq}(\varrho_1(t), \varrho_2(t))\le W_{\sq}(\varrho_{1,0}, \varrho_{2,0})e^{-(r-2L) t}.
\]
 Thus, the last claim follows.

\section{Proof of Lemma \ref{lmm:fundamentalmoments}}\label{app:fundmoment}

Since $\sigma>0$, without loss of generality, we will assume
\[
\sigma\equiv 1.
\]
We first fix $s\ge 0$. 
Consider the trajectory determined by
\begin{gather}
\partial_t Z(t; y, s)=b(Z, t),~~Z(s; y, s)=y.
\end{gather}

Then, one has
\[
\frac{1}{2}\frac{d}{dt}|Z|^2\le \beta_1|Z|^2+C|Z|
\]
as $b(0,t)$ is bounded.
Hence,
\[
\frac{d}{dt}|Z|\le \beta_1|Z|+C.
\]
This means 
\begin{gather}\label{eq:controlZ}
|Z|\le |y|e^{\beta_1(t-s)}+C\int_s^t e^{\beta_1(t-s)}\,ds.
\end{gather}

Moreover, \eqref{eq:b1cond1} implies that
\[
v\cdot\nabla b_1(x,t)\cdot v\le \beta_1|v|^2,~~\forall v, x\in \R^d, t\ge 0.
\]
Consequently
\begin{gather}\label{eq:Zjaboci}
|\nabla_y Z|\le \sqrt{d} e^{\beta_1 (t-s)},
\end{gather}
uniform in $y$, where $|A|:=\sqrt{\sum_{ij}A_{ij}^2}$ is the matrix Frobenius norm.

Assume without loss of generality $|x|\ge  |y|$. Clearly,
\[
|b_1(x, t)-b_1(y,t)|
\le |x-y|\left|\int_0^1\nabla b_1(x\theta+y(1-\theta),t)d\theta\right|.
\]
 Due to the assumption of polynomial growth of derivatives of $b_1$, 
\[
|\nabla b_1(xz+y(1-z),t)|\le C(1+|x\theta+y(1-\theta)|^q).
\]
If $|y|\le \frac{1}{2}|x|$, then $|x\theta+y(1-\theta)|\le \frac{3}{2}|x|\le 3|x-y|$.
Otherwise, we bound this by a polynomial of $|y|$ directly. Hence,
\begin{gather}\label{eq:differenceb1}
|b_1(x, t)-b_1(y, t)|\le \min(P_1(|x|), P_1(|y|))|x-y|+P_2(|x-y|)|x-y|
\end{gather}
for some polynomials $P_1, P_2$.

We denote
\begin{gather}
\Phi_0(x,t; y,s):=\frac{1}{(2\pi (t-s))^{d/2}}\exp\left(-\frac{|x-Z(t;y,s)|^2}{2(t-s)}\right).
\end{gather}

Below, we establish an important lemma indicating that $\Phi_0$ is the main term of $\Phi$, and Lemma \ref{lmm:fundamentalmoments} will follow easily.
\begin{lemma}\label{lmm:funddecomp}
It holds that
\begin{gather}
\Phi(x, t; y, s)=\Phi_0(x,t; y,s)+u(x,t; y,s),
\end{gather}
where $u$ satisfies
\begin{gather}
\int_{\R^d} (1+|x|^q)|\nabla_y u|\,dx\le h(t-s)P(|y|),
\end{gather}
for some polynomial $P(\cdot)$, some nondecreasing function $h(\cdot)$ defined on $[0,\infty)$. 

Moreover, if $\beta_1<0$, $h(t-s)$ can be taken as
\begin{gather}
h(t-s)=Ce^{-\delta_1(t-s)}
\end{gather}
for some $\delta_1>0$.
\end{lemma}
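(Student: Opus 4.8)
The plan is the classical parametrix (Levi/Duhamel) comparison, with $\Phi_0$ --- the Gaussian of the free heat semigroup recentred along the characteristic $Z(\cdot;y,s)$ --- taken as the first approximation. \textbf{Step 1.} A direct computation (the heat part of $\partial_t\Phi_0$ matching the $\sigma^2\Delta_x$ term, and $\dot Z=b_1(Z,\cdot)$ generating the drift shift) gives $(\partial_t-\mathcal{L}_{b_1}^*)\Phi_0=F$ with
\[
F(x,t;y,s):=\big[b_1(x,t)-b_1(Z(t;y,s),t)\big]\cdot\nabla_x\Phi_0+\big(\nabla\cdot b_1(x,t)\big)\Phi_0 ,
\]
where $\mathcal{L}_{b_1}^*$ is the operator in \eqref{eq:linearfp}. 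Since $\Phi$ and $\Phi_0$ share the initial datum $\delta(x-y)$, $u=\Phi-\Phi_0$ solves $(\partial_t-\mathcal{L}_{b_1}^*)u=-F$ with zero initial datum; because $F$ is a Gaussian times polynomials and, by integration by parts, $\int_{\R^d}F(z,\tau;y,s)\,dz=0$ with $\int_{\R^d}(1+|z|^q)|F|\,dz$ bounded in $\tau$, Duhamel's formula is legitimate:
\[
u(x,t;y,s)=-\int_s^t\!\!\int_{\R^d}\Phi(x,t;z,\tau)\,F(z,\tau;y,s)\,dz\,d\tau .
\]

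\textbf{Step 2.} As $\Phi(x,t;z,\tau)$ has no $y$-dependence, $\nabla_y u=-\int_s^t\!\int\Phi\,\nabla_y F\,dz\,d\tau$, and the $y$-dependence of $F$ enters only through $Z(\tau;y,s)$, so $\nabla_y F=(\partial_Z F)\,\nabla_y Z$ with $|\nabla_y Z|\le\sqrt d\,e^{\beta_1(\tau-s)}$ by \eqref{eq:Zjaboci}. Integrating in $x$ against $(1+|x|^q)$ and applying Lemma \ref{lmm:momentevol}(i) with $g=\delta_z$ (so $\int_{\R^d}(1+|x|^q)\Phi(x,t;z,\tau)\,dx\le C(T)(1+|z|^q)$) reduces everything to bounding $\int_{\R^d}(1+|z|^q)|\nabla_y F(z,\tau;y,s)|\,dz$. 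Using $\partial_Z\Phi_0=-\nabla_z\Phi_0$, the Gaussian moments $\int(1+|z|^q)|z-Z|^m\Phi_0\,dz\le C\,P(|Z|)(\tau-s)^{m/2}$, the polynomial growth of the derivatives of $b_1$, the structural bound \eqref{eq:differenceb1}, and \eqref{eq:controlZ} (so that $|Z(\tau;y,s)|$ is dominated by a polynomial in $|y|$, with $T$-independent coefficients when $\beta_1<0$), one obtains, for some $q>1$ and a polynomial $P$,
\[
\int_{\R^d}(1+|z|^q)\,|\nabla_y F(z,\tau;y,s)|\,dz\le P(|y|)\,(\tau-s)^{-1/2}e^{\beta_1(\tau-s)},
\]
the $(\tau-s)^{-1/2}$ being the usual single-derivative loss on the Gaussian parametrix, integrable at $\tau=s$.

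\textbf{Step 3.} Combining, $\int(1+|x|^q)|\nabla_y u|\,dx\le C(T)P(|y|)\int_0^{t-s}r^{-1/2}e^{\beta_1 r}\,dr$, and $\int_0^{R}r^{-1/2}e^{\beta_1 r}\,dr\le 2R^{1/2}e^{|\beta_1|R}$, which gives the general-case bound with $h(r):=C(T)e^{(|\beta_1|+1)r}$. If $\beta_1<0$, the constant is $T$-independent and $\int_0^{R}r^{-1/2}e^{\beta_1 r}\,dr\le\int_0^{\infty}r^{-1/2}e^{\beta_1 r}\,dr<\infty$, so this bound is already uniform in $t-s$, which settles $t-s\le1$ (rewrite it as $\le CP(|y|)e^{\delta_1}e^{-\delta_1(t-s)}$). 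For $t-s>1$ put $t'=(s+t)/2$ and split, via $(\partial_t-\mathcal{L}_{b_1}^*)u=-F$, $\nabla_y u(\cdot,t)=S_{t',t}\nabla_y u(\cdot,t')-\int_{t'}^t S_{\tau,t}\nabla_y F(\cdot,\tau)\,d\tau$. Since $\int_{\R^d}\Phi_0\,dx=\int_{\R^d}\Phi\,dx=1$ and $\int_{\R^d}F\,dz=0$, both $\partial_{y_i}u(\cdot,t';y,s)$ and $\partial_{y_i}F(\cdot,\tau;y,s)$ have zero total mass, so Lemma \ref{lmm:momentevol}(ii) applies to the first term: the uniform bound just proved (with weight $q_1$) gives $M_{q_1}(|\partial_{y_i}u(\cdot,t';y,s)|)\le CP(|y|)$ independently of $t'$, hence the first term is $\le P(|y|)e^{-\delta(t-t')}=P(|y|)e^{-\delta(t-s)/2}$. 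For the second term, Step 2 gives $\int(1+|x|^q)|\partial_{y_i}F(\cdot,\tau)|\,dx\le\sqrt2\,P(|y|)e^{\beta_1(t-s)/2}$ for $\tau\in[t',t]$, and $S_{\tau,t}$ is moment-stable, so the second term is $\le CP(|y|)(t-s)e^{\beta_1(t-s)/2}$. For any $\delta_1\in(0,\min(\delta,|\beta_1|/2))$ both pieces are $\le CP(|y|)e^{-\delta_1(t-s)}$, which is the strong-confinement claim; the estimates for $\Phi=\Phi_0+u$ itself then follow by adding the explicit Gaussian bound for $\nabla_y\Phi_0$.

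\textbf{Where the difficulty lies.} Step 2 is routine Gaussian bookkeeping. The genuine obstacle is the exponential decay in Step 3 under strong confinement: a single Duhamel pass only yields a bound uniform in $t-s$, because $\int(1+|x|^q)\Phi(x,t;z,\tau)\,dx$ does not decay ($\Phi$ is a probability density). The decay has to be harvested from the mean-zero structure $\int\nabla_y u\,dx=\int\nabla_y F\,dx=0$ through the contraction estimate Lemma \ref{lmm:momentevol}(ii), and the delicate point is keeping the polynomial-in-$|y|$ prefactors under control --- in particular, bounding $M_{q_1}(|\partial_{y_i}u(\cdot,t';y,s)|)$ uniformly in the splitting time $t'$ before feeding it into the contraction.
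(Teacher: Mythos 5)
Your proposal is correct and follows essentially the same route as the paper: same parametrix $\Phi_0$ recentred along the characteristic $Z$, same Duhamel representation for $u=\Phi-\Phi_0$, same reliance on Lemma \ref{lmm:momentevol} for moment stability, and the same midpoint split $t'=(s+t)/2$ together with the zero-mass observation $\int\nabla_y u\,dx=0$ to harvest exponential decay via Lemma \ref{lmm:momentevol}(ii) in the strongly confining case. The only cosmetic difference is that you differentiate in $y$ under the Duhamel integral, whereas the paper first differentiates the PDE to obtain the equation for $v=\partial_{y_i}u$ and then applies Duhamel to $v$; the two are equivalent since the evolution operator $S_{\lambda,t}$ has no $y$-dependence.
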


\begin{proof}
It is not hard to verify
\begin{gather}
\partial_t\Phi_0+\nabla_x\cdot(b_1(x,t)\Phi_0)
-\Delta_x\Phi_0=\nabla_x\cdot([b_1(x,t)-b(Z,t)]\Phi_0).
\end{gather}
Hence, letting $u=\Phi-\Phi_0$, one finds
\begin{gather}
\begin{split}
&\partial_tu+\nabla_x\cdot(b_1(x,t)u)
-\Delta_x u=-\nabla_x\cdot([b_1(x,t)-b(Z,t)]\Phi_0),\\
&u|_{t=s}=0.
\end{split}
\end{gather}
Letting
\[
v:=\partial_{y_i}u,
\]
one has
\begin{gather}
\begin{split}
&\partial_tv+\nabla_x\cdot(b_1(x,t)v)
-\Delta_x v=R,\\
&u|_{t=s}=0,
\end{split}
\end{gather}
where
\[
R:=\nabla_x\cdot b_1(x,t) \nabla\Phi_0\cdot \partial_{y_i}Z
+\partial_{y_i}Z\cdot\nabla b_1(x,t)\cdot\nabla\Phi_0
+(b_1(x,t)-b_1(Z,t))\cdot\nabla^2\Phi_0\cdot\partial_{y_i}Z.
\]
Writing $\nabla_x\cdot b_1(x,t)=
[\nabla_x\cdot b_1(x,t)-\nabla\cdot b_1(Z,t)]+\nabla\cdot b_1(Z,t)$, it is not hard to see (using also \eqref{eq:Zjaboci} and \eqref{eq:differenceb1})
\[
|R|\le P(|Z|)\frac{1}{(t-s)^{(d+1)/2}}\exp\left(-\frac{\gamma |x-Z|^2}{2(t-s)}\right)e^{\beta_1(t-s)}
\]
for some polynomial $P$ and $\gamma \in (0, 1)$.

We then find
\[
v=\int_s^t S_{\lambda,t}R\,d\lambda.
\]

Below, we use $h_i(\cdot)$ to denote some nondecreasing functions defined on $[0,\infty)$.
By Lemma \ref{lmm:momentevol}, one has
\[
\int_{\R^d} (1+|x|^q)|v|\,dx
\le h_1(t-s)\int_s^t \int_{\R^d}(1+|x|^q)|R(x,\lambda)|\,dxd\lambda.
\]

Clearly,
\[
\int_{\R^d} (1+|x|^q)\frac{1}{(t-s)^{(d+1)/2}}\exp\left(-\frac{\delta |x-Z|^2}{2(t-s)}\right)\,dx
\le C\frac{1+(t-s)^{q/2}}{\sqrt{t-s}}(1+|Z|^q).
\]
Moreover, by the stability of trajectory of $Z$ \eqref{eq:controlZ},
$P(|Z|) \le h_2(t-s)P(|y|)$.
Hence,
\[
\int_{\R^d} (1+|x|^q)|v|\,dx
\le h_3(t-s)P(|y|)\int_s^t \frac{1}{\sqrt{\lambda-s}} d\lambda. 
\]

If $\beta_1<0$, we consider $t\ge s+1$ and
\begin{gather}\label{eq:vsplit}
v=\int_s^t S_{\lambda,t}R\,d\lambda
=S_{(t+s)/2,t}\int_{s}^{\frac{t+s}{2}}S_{\lambda,(t+s)/2}R\,d\lambda
+\int_{(t+s)/2}^tS_{\lambda,t}R\,d\lambda.
\end{gather}
The second term is like
\[
\begin{split}
\int_{\R^d} (1+|x|^q)|v|\,dx
&\le C\int_{(s+t)/2}^t \int_{\R^d}(1+|x|^q)|R(x,\lambda)|\,dxd\lambda\\
& \le C\int_{(s+t)/2}^t e^{\beta_1(\lambda-s)}P(|Z|)
\int_{\R^d}\frac{1+|x|^q}{(t-s)^{(d+1)/2}}\exp\left(-\frac{\delta |x-Z|^2}{2(t-s)}\right)\,dxd\lambda.
\end{split}
\]
This is easily controlled by $P(|y|)e^{-\delta_1(t-s)}$
for some polynomial $P$ and $\delta_1>0$ (recall \eqref{eq:controlZ}).

For the first term in \eqref{eq:vsplit}, we note
$\int_{s}^{\frac{t+s}{2}}S_{\lambda,(t+s)/2}R\,d\lambda\in L^1$,
and
\[
\int_{\R^d}\int_{s}^{\frac{t+s}{2}}S_{\lambda,(t+s)/2}R\,d\lambda\,dx=0
\]
since $\int R(x,\lambda)\,dx=0$ for all $\lambda$. Hence, statement (ii) in Lemma \ref{lmm:momentevol} implies that
\begin{multline*}
\int_{\R^d}(1+|x|^q)\left|S_{(t+s)/2,t}\int_{s}^{\frac{t+s}{2}}S_{\lambda,(t+s)/2}R\,d\lambda\right|\,dx \\
\le e^{-\delta(t-s)/2}P\left(M_{q_1}\left(\Big|\int_{s}^{\frac{t+s}{2}}S_{\lambda,(t+s)/2}R\,d\lambda\Big|\right)\right).
\end{multline*}
For the inside
\[
M_{q_1}\left(\left|\int_{s}^{\frac{t+s}{2}}S_{\lambda,(t+s)/2}R\,d\lambda \right|\right)
\le C\int_{s}^{(t+s)/2}\int_{\R^d}(1+|x|^{q_1})|R|\,dxd\lambda,
\]
where $C$ is independent of time as $\beta_1<0$. As has been proved, 
the integral here is controlled by products of polynomials of $|y|$, $|t-s|$.
Hence, the first term is also controlled similarly.
\end{proof}

As Lemma \ref{lmm:funddecomp} is proved, 
Lemma \ref{lmm:fundamentalmoments} is very straightforward, since
$|\nabla_y Z|\le Ce^{\beta_1(t-s)}$.

\bibliographystyle{plain}
\bibliography{sdealg}

\end{document}